\newtheorem{theorem}{Theorem}
\newtheorem{definition}{Definition}
\newtheorem{lemma}{Lemma}
\newtheorem{remark}{Remark}
\newtheorem{example}{Example}
\newtheorem{assumption}{Assumption}[section]
\newtheorem{corollary}{Corollary}[section]
\newtheorem{proposition}{Proposition}[section]
\journal{arXiv}
\numberwithin{equation}{section}
\numberwithin{theorem}{section}
\numberwithin{lemma}{section}
\numberwithin{definition}{section}
\numberwithin{example}{section}
\numberwithin{assumption}{section}
\begin{document}
	\begin{frontmatter}
		\title{Stochastic generalized Kolmogorov systems with small diffusion: II. Explicit approximations for periodic solutions in distribution}
		\author[a,b] {Baoquan Zhou}
        \author[c] {Hao Wang}
        \ead{hao8@ualberta.ca}
        \author[c] {Tianxu Wang}
		\author[b] {Daqing Jiang\corref{cor1}}
	    \ead{jiangdq\_mail@163.com}
		\cortext[cor1]{Corresponding author.}
		\address[a] {School of Mathematics and Statistics, Key Laboratory of Applied Statistics of MOE, Northeast Normal University, Changchun 130024, P.R. China}
        \address[b] {College of Science, China University of Petroleum (East China), Qingdao 266580, P.R. China}
        \address[c] {Department of Mathematical and Statistical Sciences, University of Alberta, Edmonton, Alberta T6G 2G1, Canada}
		\begin{abstract}

  This paper is Part II of a two-part series on coexistence states study in stochastic generalized Kolmogorov systems under small diffusion. Part I provided a complete characterization for approximating invariant probability measures and density functions, while here, we focus on explicit approximations for periodic solutions in distribution. Two easily implementable methods are introduced: periodic normal approximation (PNOA) and periodic log-normal approximation (PLNA). These methods offer unified algorithms to calculate the mean and covariance matrix, and verify positive definiteness, without additional constraints like non-degenerate diffusion. Furthermore, we explore essential properties of the covariance matrix, particularly its connection under periodic and non-periodic drift coefficients. Our new approximation methods significantly relax the minimal criteria for positive definiteness of the solution of the discrete-type Lyapunov equation. Some numerical experiments are provided to support our theoretical results.
		\end{abstract}
		\begin{keyword}
		Stochastic generalized Kolmogorov systems; Degenerate diffusion; Stochastic periodic solutions; Explicit approximation; Lyapunov equation
		\end{keyword}	
	\end{frontmatter}
	\section{Introduction}
To describe the dynamics of interacting populations, Kolmogorov systems (a class of deterministic systems) have been widely used in the modeling of ecological and biological processes, which are in general form given by
 \begin{equation}\label{1.1}
 \dot{x}_i(t)=x_i(t)b_i(t,x_1(t),...,x_n(t)),\ \ \ \ i=1,...,n,
 \end{equation}
 where the vector field $(b_i(t,\cdot))$ is $\theta$-periodic in $t$ for some $\theta>0$. The periodic time-dependence in (\ref{1.1}) is frequently used to model seasonal variations or time recurrence \cite{S01}. 

However, in most cases, capturing real-world processes through (\ref{1.1}) is challenging due to inevitable environmental noises. 
 While random perturbations in the studied system are often small compared to the deterministic component \cite{S02,S03,S04}, they can still have a significant destabilizing impact on the asymptotic behavior of these systems \cite{S05}.
  Thus, investigating the long-term properties of (\ref{1.1}) under small perturbations is a fundamental issue of both practical and theoretical significance.
 To present a wide range of possibilities for applications, we now incorporate small perturbations into a generalized version of (\ref{1.1}), yielding the following stochastic generalized Kolmogorov systems:
 
 \begin{equation}\label{1.2}
 \begin{cases}
 dX_{\epsilon,i}(t)=f_i(t,\mathbf{X}_{\epsilon}(t))dt+\sqrt{\epsilon}X_{\epsilon,i}(t)\sum\limits_{j=1}^{N}g_{ij}(t,\mathbf{X}_{\epsilon}(t))dW_j(t),\ \ \ \ i=1,...,n,\\
 \mathbf{X}_{\epsilon}(0)=\mathbf{x}_0\in\mathbb{R}^n,
 \end{cases}
 \end{equation}
 where $\epsilon>0$ is a small parameter, $\mathbf{X}_{\epsilon}(t)=(X_{\epsilon,1}(t),...,X_{\epsilon,n}(t))^{\top}_{t\ge 0}$ (the superscript ``$\top$" stands for the transpose), and $(W_1(t),...,W_N(t))^{\top}:=\mathbf{W}(t)$ is an $N$-dimensional vector of independent standard Brownian motions. $G_c=(x_ig_{ij})$ is an $n\times N$ matrix-valued function on $\mathbb{R}^n$, called the $\mathit{Kolmogorov\ noise\ matrix}$. Moreover, $G_c(t,\cdot)$ and the drift coefficient $(f_i(t,\cdot))$ are $\theta$-periodic in $t$. The formulation (\ref{1.2}) shows wider application and covers the most common dynamical systems in the literature, such as epidemic models, Lotka--Volterra models and their variants, chemostat models, algal growth models, etc.

 
A longstanding central concern in ecology and biology revolves around the stable coexistence of interacting populations \cite{S06}. For periodic systems, the focus lies on the existence of positive periodic solutions. While much progress has been made for deterministic systems \cite{S07,S08,S09}, yielding tools like fixed point theorems and degree theory, finding periodic solutions for stochastic differential equations (SDEs) still remains a formidable challenge. One key difficulty arises in defining $\mathit{stochastic\ periodic\ solutions}$ (SPSs). Meyn and Tweedie \cite{S01} initiated the study by introducing the concept of recurrence for Markov processes. Khasminskii \cite{S11} subsequently defined SPSs in the sense of periodic Markov processes and obtained an existence theorem through Lyapunov functional methods. However, recurrence is not precise enough to capture stochastic periodicity \cite{S12}. In recent decades, scholars have proposed and explored pathwise SPSs (e.g., \cite{S01,S02,S03}). Nonetheless, due to the complexities and unpredictability of real-world random perturbations, requiring the recurrence of solution processes under diffusion to specific sample paths, such as the evolution of the annual sunspot number, is inappropriate. More precisely, randomness and periodicity become intertwined in periodic SDEs. Mellah and Fitte \cite{S16} further demonstrated that there are no periodic solutions in the sense of probability or moment for SDEs with almost periodic coefficients.


  Despite the high randomness of the solutions of SDEs with small diffusion coefficients, they may still exhibit stable statistical properties over a large time span, such as expectations and covariances \cite{S17}. This fact inspires us to consider SPSs in distribution (SPSD for abbreviation) for (\ref{1.2}), i.e., find a solution $(\mathbf{X}_{\epsilon}(t))$ such that $\mathbf{X}_{\epsilon}(t+\theta)$ and $\mathbf{X}_{\epsilon}(t)$ are identically distributed, $\forall\ t\ge 0$. This definition naturally captures the periodicity and randomness of the solution of periodic SDEs. Most recently, some efforts (in particular, technology level) have been made for the existence of SPSD. For example, Chen et al. \cite{S18} developed a weak Halanay-like criterion using Skorokhod theorems. Moreover, Ji et al. \cite{S19} further studied this issue for SDEs with irregular coefficients (including non-degenerate and degenerate diffusions) under the existence of unbounded Lyapunov functions.
	
Apart from the existence of SPSD, the associated explicit probability distributions and density functions are also required for a complete characterization of the coexistence state of periodic SDEs. Such probability density is determined by a corresponding Kolmogorov--Fokker--Planck (KFP) equation. Thus, we expect to obtain a periodic solution of the following KFP equation associated with (\ref{1.2}):
	\begin{equation}\label{1.3}
	\begin{cases}
	\dfrac{\partial}{\partial t}\mathcal{P}_{\epsilon}(t,\mathbf{x})=-\sum\limits_{i=1}^{n}\dfrac{\partial}{\partial x_i}\bigl(f_i(t,\mathbf{x})\mathcal{P}_{\epsilon}(t,\mathbf{x})\bigr)+\dfrac{\epsilon}{2}\sum\limits_{i,j=1}^{n}\dfrac{\partial^2}{\partial x_i\partial x_j}\bigl((G_cG_c^{\top})_{ij}\mathcal{P}_{\epsilon}(t,\mathbf{x})\bigr)\\
	\ \ \ \ \ \ \ \ \ \ \ \ \ :=\mathscr{L}_{\epsilon,\theta}\mathcal{P}_{\epsilon}(t,\mathbf{x}),\ \ \ \forall\ (t,\mathbf{x})\in[0,\infty)\times\mathbb{R}^n,\\
	\int_{\mathbb{R}^n}\mathcal{P}_{\epsilon}(t,\mathbf{x})d\mathbf{x}=1,\ \ \ \mathcal{P}_{\epsilon}(t,\mathbf{x})\ge 0,\\
	\mathcal{P}_{\epsilon}(t+\theta,\mathbf{x})=\mathcal{P}_{\epsilon}(t,\mathbf{x}),\ \ \ \forall\ (t,\mathbf{x})\in[0,\infty)\times\mathbb{R}^n,
	\end{cases}
	\end{equation}
	where $\mathscr{L}_{\epsilon,\theta}$ is the $\theta$-periodic KFP operator. The desired result can follow from $\int_{\mathbb{A}}\mathcal{P}_{\epsilon}(t,\mathbf{x})d\mathbf{x}$, $\forall\ \mathbb{A}\subset\mathbb{R}^n$.

 Due to its potential prevalence in applications, the study of SPSD has gained increasing attention in recent years, mainly focusing on central issues such as existence, uniqueness, and convergence in mean \cite{S01,S05,S18,S19,S20}. However, the explicit characterization of SPSD remains an open question. Currently, there are virtually no available approaches to solve KFP equations analytically, as most stochastic dynamical systems (especially (\ref{1.2})) are highly complex and nonlinear. Existing numerical methods for solving KFP equations, including techniques like Monte Carlo simulation and numerical PDE methods (e.g., \cite{S21,S22,S23,S24}), are still underdeveloped and only focus on autonomous systems. In fact, applying such techniques to (\ref{1.3}) is almost impossible because of two significant challenges.
 The first challenge comes from the need for high-precision local solutions and the management of large numerical domains. We only consider the autonomous case of (\ref{1.2}) for convenience, i.e., $\mathcal{P}_{\epsilon}(t,\mathbf{x})$ is time-independent ($\mathcal{P}_{\epsilon}(\mathbf{x})$ for short). It follows from the Freidlin--Wentzell theory \cite{S21} that the density function $\mathcal{P}_{\epsilon}(\mathbf{x})$ under small diffusion is concentrated on an $O(\epsilon)$-neighborhood of all attractors of the deterministic system of (\ref{1.2}) in a large probability. The grid size of discretization is then required to be sufficiently small for such a local numerical solution with high precision. Meanwhile, (\ref{1.3}) is defined on $\mathbb{R}^n$, lacking a well-defined boundary condition. The common practice is to let the numerical domain (with zero-value boundary condition) cover the $O(\epsilon)$-neighborhood of the attractors with sufficient margin \cite{S25}. This inevitably incurs significant computational efforts. Moreover, the local boundary condition of such $O(\epsilon)$-neighborhood is unknown, making the issue even more challenging. The second difficulty arises from stochastic periodicity. For periodic cases, the aforementioned $O(\epsilon)$-neighborhood will be no longer time-independent. Instead, it is replaced by an $O(\epsilon,t)$-neighborhood family on $t\in[0,\theta)$, which requires higher computational costs. Furthermore, the discrete equation of (\ref{1.3}) may not have the numerical solution in the presence of the periodicity of $\mathcal{P}_{\epsilon}(t,\mathbf{x})$. These difficulties and challenges force us to only approximate the probability distributions for an explicit characterization of the SPSD of (\ref{1.2}). 
	
 
 The probability distributions of SPSD in the time-independent case degenerate into their autonomous analogs, called $\mathit{invariant\ probability\ measures}$ (IPMs). Recently, a few numerical methods for approximating IPMs have been developed including stochastic theta method \cite{S26}, the truncated Euler--Maruyama (EM) scheme \cite{S27,S28}, and the backward EM scheme \cite{S29,S30}. Although these numerical methods have demonstrated their ability to approximate the underlying IPMs, there are two limitations: (i) The approximate expressions of their invariant probability density functions (IPDFs) cannot be obtained, and (ii) such methods are sample-path based, so whether they can be applied to the periodic case is unclear. Even if applicable, it would require at least a multiple-fold increase in computational effort and convergence analysis. By contrast, an interesting idea is to consider special continuous probability distributions to approximate IPMs \cite{S03}, and the expressions of IPDFs can then be explicitly approximated. Our study in the series is along this line. Let us briefly recall the associated important work to date. The early classical assumption is that the IPM can be approximated by a Gibbs measure, which takes the form
	$$\int_{\mathbb{A}}\frac{1}{K}e^{-H(\mathbf{y})}d\mathbf{y},\ \ \ \forall\ \mathbb{A}\subset\mathbb{R}^n,$$
 where $H(\cdot)$ is the quasi-potential function \cite{S31}. However, this assumption is only applicable to gradient systems and cannot be generally verified due to the high regularity requirements of $H(\cdot)$ \cite{S32}. In contrast, Li et al. \cite{S33} initiated a study by introducing small perturbations to a biochemical system with a unique stable equilibrium $\mathbf{x}^*$. Under non-degenerate diffusion (i.e., the diffusion matrix $\aleph$ is positive definite), they pointed out that the IPM around $\mathbf{x}^*$ may be approximately normally distributed, with the covariance matrix $\Sigma$ satisfying the continuous-type Lyapunov equation $\Sigma\boldsymbol{\varpi}^{\top}+\boldsymbol{\varpi}\Sigma+\aleph=\mathbb{O}$ ($\Im_c(\Sigma,\boldsymbol{\varpi},\aleph)=\mathbb{O}$ for short), where $\mathbb{O}$ is zero matrix. Zhou et al. \cite{S34} further developed their work by combining superposition principle and matrix algebra approach to obtain an approximate expression of the local IPDF of a stochastic avian influenza model. By now, such normal approximation method has been relatively well used for stochastic epidemic models in dimension$\le 5$ under non-degenerate diffusion \cite{S35,S36,S37,S38}. One of its key ideas is to study the ``standard $L_0$-algebraic equation" (e.g., \cite[Lemma 3]{S34} and \cite[Lemma 3.1]{S38}, actually a special class of the continuous-type Lyapunov equation), which helps to derive the explicit form of $\Sigma$. 
 

It should be mentioned that there is a big technique leap between the relevant approximation analysis of specific epidemic models and that of system (\ref{1.2}).  The objective of our series is to bridge this gap through two intermediary research steps, denoted as $(r_1)$ and $(r_2)$. Research $(r_1)$ aims to introduce novel mathematical techniques that extend the existing normal approximation from low-dimensional models to our general setting of (\ref{1.2}) under autonomous case. Meanwhile, Research $(r_2)$ is devoted to the extension from autonomous case to periodic case (see Fig. 1).
  
 	\begin{figure}[H]
 		\begin{center}
 			\resizebox{15cm}{1.5cm}{\includegraphics{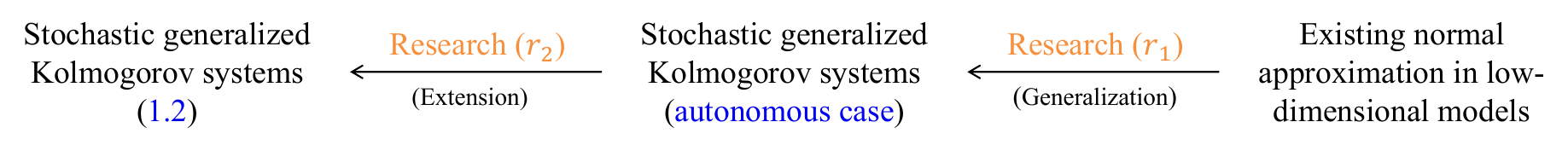}}
 		\end{center}
 		\makeatletter\def\@captype{figure}\makeatother \caption{The road map for the study of our two-part series.}
 	\end{figure}
   Recall the main issues that have been addressed in Research $(r_1)$ (see part I of the series \cite{S39}). Existing analysis of standard $L_0$-algebraic equation is limited to the positive definiteness of solutions, which is achieved through direct calculation (e.g., \cite[Appendix B]{S38}) and thus becomes increasingly challenging as equation dimension grows. We used the residue theorem and linear control theory to provide a complete characterization of the fundamental properties of arbitrary dimensional $L_0$-algebraic equation, including specific form and positive definiteness of solution and structure of eigenvalues. Additionally, in some stochastic risk-adjusted volatility models, the IPMs are long right-tailed (see \cite[Fig.4]{S27}) and cannot use regular normal approximation methods. To address this, we introduced two approaches for approximating the IPM and IPDF: (i) \textit{log-normal approximation} (LNA) and (ii) \textit{updated normal approximation} (uNA) and proposed new theoretical algorithms for calculating the expression of the covariance matrix of LNA (or uNA) approach. For our new approach to be available in degenerate diffusion, we substantially relax the classical conditions for ensuring positive definiteness of the solution of general Lyapunov equation $\Im_c(\Sigma,\boldsymbol{\varpi},\aleph)=\mathbb{O}$ by using matrix algebra approach and Routh--Hurwitz criterion.

    Then a question naturally arises whether the techniques and theories used in \cite{S39} can be applicable to Research $(r_2)$. Regrettably, the answer is not as positive as desired. In one respect, the study of the continuous-type Lyapunov equation (in particular, standard $L_0$-algebraic equation) is a key issue to obtain the covariance matrix $\Sigma$ of the approximation method in Research $(r_1)$. But when stochastic periodicity is taken into account, the IPMs will no longer exist, replaced by a time-dependent probability distribution. Accordingly, such covariance matrix $\Sigma$ becomes time-varying, denoted as $\Sigma(t)$, no longer satisfying the continuous-type Lyapunov equation. To further ensure the periodicity of the underlying probability distribution, we prove that $\Sigma(0)$ is determined by the discrete-type Lyapunov equation, as shown in Section 3. Thus for Research $(r_2)$, the fundamental properties of general discrete-type Lyapunov equation are the main focus. However, the associated analysis is even more challenging. In another respect, when periodic variations are incorporated, the global attractor of the deterministic system of (\ref{1.2}) will shift from several stable equilibria to periodic orbits and may even can not be calculated, which implies that Hurwitz matrix together with Routh--Hurwitz criterion \cite{S40} is unavailable and the explicit form of $\Sigma(t)$ is more difficult to obtained. Some generalized concepts and results are urgently developed.

  These challenges motivate our current work. Inspired by the Gaussian-like elimination idea in part I of the series \cite{S39}, together with new transformation techniques and more delicate analysis, we aim to develop a unified algorithm framework for explicitly approximating the probability distribution of the SPSD of (\ref{1.2}). Notably, although (\ref{1.2}) is more general and realistic in applications, the analysis of such systems is much more difficult. To advance this kind work, a canonical form of the discrete-type Lyapunov equation called ``standard $\mathbb{L}_0$-algebraic equation" is introduced and its asymptotic features are completely studied. The greatest challenge stems from obtaining the positive definiteness of general discrete-type Lyapunov function under degenerate diffusion. By transforming each sub-equation as much as possible into an equivalent form with a higher dimensional standard $\mathbb{L}_0$-algebraic equation structure, along with the application of the matrix algebra approach, linear control theory, and Gaussian-like elimination method, we effectively address this.

    Our main contributions are listed as follows:
	\begin{itemize}
		\item Two easily implementable explicit methods for approximating the probability distribution and density function of the SPSD of (\ref{1.2}) are developed, including (i) \textit{periodic normal approximation} (PNOA) for roughly symmetric measure, and (ii) \textit{periodic log-normal approximation} (PLNA) for right-skewed measure. A more biologically reasonable stochastic modeling assumption that falls into our general setting is provided; see Section 5 in details. 
  \item
  New theoretical algorithms for calculating the expression of the covariance matrix of PNOA (or PLNA) are proposed. A novelty of these algorithms is that their positive definiteness can be verified simultaneously. We further provide two modified approximation algorithms under slightly complex diffusions, and examine basic properties of such covariance matrix without periodic drift coefficients. Notably, our matrix transformations differ from the classical command "dlyap($\cdot,\cdot$)" in MATLAB software, offering simpler computational structure and wider applicability.
  \item A complete characterization of general standard $\mathbb{L}_0$-algebraic equation is provided. The minimal criteria for guaranteeing positive definiteness of the solution of general discrete-type Lyapunov equation are substantially relaxed. Our results can be regarded as a generalization of Routh--Hurwitz criterion involving periodicity.
	\end{itemize}
	
	The outline of this paper is as follows. Section 2 begins with necessary notations, lemma and mathematical definitions. Then some fundamental properties of standard $\mathbb{L}_0$-algebraic equations are given. Section 3 develops a complete framework of PNOA method for the probability distribution and density function of the SPSD of (\ref{1.2}), including basic formulation, unified explicit algorithms, and characterization of nondegeneracy of the approximated probability distribution. The corresponding framework of PLNA method is provided in Section 4. Section 5 presents an application of our main results in stochastic population models. An Appendix, which contains the proofs of key auxiliary propositions in Section 2, is given at the end of the paper.
	\section{Preliminaries}
	Throughout the paper, we work on a complete filtered probability space $\{\Omega,\mathscr{F},\{\mathscr{F}_t\}_{t\ge 0},\mathbb{P}\}$, where $\{\mathscr{F}_t\}_{t\ge 0}$ is a filtration satisfying the usual conditions \cite{S41}. Let $\mathbb{E}$ be the expectation related to $\mathbb{P}$, and $\mathbf{W}(t)$ is adapted to $\{\mathscr{F}_t\}_{t\ge 0}$. To help the reading, Table 1 and Definitions \ref{2:1}-\ref{2:4} show the mathematical notations and several important classes of matrices used in this paper.  
	\begin{table}[hthp]
		\centering
		\caption{A glossary of mathematical notations used in the paper}
		\label{table:para}
		\begin{small}
		\begin{center}
		\begin{threeparttable}
		\begin{tabular}{ll}
		\noalign{\smallskip}\hline\noalign{\smallskip}
		$\mathbf{I}_l$ & $l$-dimensional identity matrix\\
		$\mathbf{e}_l$ & $:=(1,0,0,...,0)^{\top}\in\mathbb{R}^l$\tnote{\textcolor{blue}{$\ddag1$}}\\
		$\boldsymbol{\beta}_l$ & $:=(0,0,...,0,1)\in\mathbb{R}^{1\times l}$\\
		$\mathbb{O}_{l,q}$ (resp., $\mathbb{O}_{l}$)\tnote{\textcolor{blue}{$\ddag2$}} & $l\times q$ (resp., $l\times l$)-dimensional zero matrix\\
		$\ln\mathbf{X}$ & $:=(\ln X_1,...,\ln X_l)^{\top}$, where $\mathbf{X}=(X_1,...,X_l)^{\top}$\\
		$\mathbb{S}_l^k$\tnote{\textcolor{blue}{$\ddag1$}} & index set $\{k+1,k+2,...,l\}\ (-1\le k\le l)$\\
		$\Im_d(\Sigma,\boldsymbol{\varpi},\aleph)=\mathbb{O}$ & the discrete-type Lyapunov equation $\boldsymbol{\varpi}\Sigma\boldsymbol{\varpi}^{\top}-\Sigma+\aleph=\mathbb{O}$, where $\Sigma$ is real symmetric\\
		$|\cdot|$ & the Euclidean norm (or determinant of a matrix)\\
		$\boldsymbol{a}^{\langle k\rangle}$ & subvector formed by the first $k$ part of vector $\boldsymbol{a}$\\
		$A^{(k)}$ (resp., $\boldsymbol{a}^{(k)}$) & submatrix formed by the first $k\times k$ part of matrix $A$ (resp., the $k$th element of vector $\boldsymbol{a}$)\\
		$A^{-1}$ & inverse matrix of $A$\\
		$A^{\top}$ & transpose of $A$\\
		$\psi_A(\cdot)$ & the eigenpolynomial of matrix $A$\\
		$\mathscr{G}_{c,A}$ & the contraction-related matrix of $A$ (see Definition \ref{2:3})\\
		$A\succ B$ & $A-B$ is a positive definite matrix \\
		$A\succeq B$ & $A-B$ is a positive semidefinite matrix \\
		$\text{diag}\{A_1,...,A_l\}$ & The generalized diagonal matrix with the sub-block matrices $A_i\ (i\in\mathbb{S}_l^0)$ \\
		$\overline{\mathbf{CM}}(l)$ & $:=\{A\in\mathbb{R}^{l\times l}|\ \{|\lambda|:\psi_A(\lambda)=0\}\in(0,1)\}$\\
		$\mathscr{T}(l)$ & set of all $l$-dimensional \underline{standard $\mathbf{CM}$ matrices} in Definition \ref{2:1}\\
		$\mathcal{U}(l)$ & set of all $l$-dimensional nonsingular upper triangular matrices\\
		$\mathcal{U}_{cm}(l)$ & set of all $l$-dimensional \underline{upper $\mathbf{CM}$--Hessenberg matrices} in Definition \ref{2:2}\\
		$\amalg_{l,j}$ & $:=\text{diag}\{\underbrace{0,0,...,0}_{j-1\ \rm{term}},1,0,...,0\}\in\mathbb{R}^{l\times l}$\\
		\noalign{\smallskip}\hline
		\end{tabular}
		
		\begin{tablenotes}
			\footnotesize
			\item[\textcolor{blue}{$\ddag1$}] $\mathbb{R}^l:=\mathbb{R}^{l\times 1}$; $\mathbb{S}_l^l=\emptyset$; $\mathbb{R}_+^l=(0,\infty)^l:=\{(x_1,...,x_l)^{\top}\in\mathbb{R}^l|x_i>0,\ \forall\ i\in\mathbb{S}_l^0\}$. In particular, let $\mathbb{S}_{\infty}^0:=\{1,2,3,...\}$.
			\item[\textcolor{blue}{$\ddag2$}] If there is no ambiguity in theoretical analysis, $\mathbb{O}_{l,q}$ (or $\mathbb{O}_{l}$) can be simplified as $\mathbb{O}$.  
		\end{tablenotes}	
		\end{threeparttable}
		\end{center}
		\end{small}
	\end{table}
	\begin{definition}\label{2:1}
		The equation $\Im_d(\Sigma,\boldsymbol{\alpha},\aleph)=\mathbb{O}$ is called a standard $\mathbb{L}_0$-algebraic equation if $\boldsymbol{\alpha}\in\mathscr{S}(k)$ and $\aleph=\amalg_{k,1}\ (k=1,2,...)$.
	\end{definition}
	\begin{definition}\label{2:2}
		$C$ is called an $l$-dimensional standard $\mathbf{CM}$ matrix if $C\in\overline{\mathbf{CM}}(l)$ with the form
		\begin{equation*}
		C=\left(\begin{array}{cc}
		-\mathbf{c}^{\langle l-1\rangle} & -c_l\\ 
		\mathbf{I}_{l-1} & \mathbb{O}
		\end{array}\right),
		\end{equation*}
		where $\mathbf{c}=(c_1,c_2,...,c_l)$.
	\end{definition}
	\begin{definition}\label{2:3}
		$C=(c_{ij})_{l\times l}$ is called an $l$-dimensional upper $\mathbf{CM}$--Hessenberg matrix if $C\in\overline{\mathbf{CM}}(l)$ and it satisfies $c_{j+1,j}\neq 0$ and $c_{ij}=0$ for any $j\in\mathbb{S}_{l-1}^0;i\in\mathbb{S}_l^{j+1}$.
	\end{definition}
	\begin{definition}\label{2:4}
		Let $\psi_A(\lambda)=\lambda^{l}+\sum_{i=1}^{l}a_i\lambda^{l-i}$, then $\mathscr{G}_{c,A}$ is called the contraction-related matrix of $A$ if (\ref{2.1}) holds:
		\begin{equation}\label{2.1}
		\begin{cases}
		\mathscr{G}_{c,A}(1,1)=1-\sum\limits_{k=1}^{l}a_k^2,\\ \mathscr{G}_{c,A}(1,j)=-\sum\limits_{k=1}^{l}a_k(a_{k+1-j}+a_{k+j-1}):=\wp_{c,j},\ \ \forall\ j\in\mathbb{S}_l^1,\\
		\mathscr{G}_{c,A}(i,1)=a_{i-1},\ \ \ \mathscr{G}_{c,A}(i,i)=1+a_{2i-2},\ \ \forall\ i\in\mathbb{S}_l^2,\\
		\mathscr{G}_{c,A}(i,j)=a_{i-j}+a_{i+j-2},\ \ \forall\ i,j\in\mathbb{S}_l^1;i\neq j,
		\end{cases}
		\end{equation}
		where $a_j=0$ for any $j\notin\mathbb{S}_l^0$. That is,
		\begin{equation*}
		\mathscr{G}_{c,A}=\left(\begin{array}{ccccccc}
		1-\sum\limits_{k=1}^{l}a_k^2 & \wp_{c,2} & \wp_{c,3} & \wp_{c,4} & \cdots & \wp_{c,l-1} & \wp_{c,l} \\ 
		a_1 & 1+a_2 & a_3 & a_4 & \cdots & a_{l-1} & a_l \\ 
		a_2 & a_1+a_3 & 1+a_4 & a_5 & \cdots & a_l & 0 \\ 
		a_3 & a_2+a_4 & a_1+a_5 & 1+a_6 & \cdots & 0 & 0 \\ 
		\vdots & \vdots & \vdots & \vdots & \begin{sideways}$\ddots$\end{sideways} & \vdots & \vdots \\ 
		a_{l-1} & a_{l-2} & a_{l-3} & a_{l-4} & \cdots & a_1 & 1
		\end{array}\right). 
		\end{equation*}
	\end{definition}
	Below we introduce a simple result for the discrete-type Lyapunov equations (Lemma \ref{2,1}). 
	\begin{lemma}\label{2,1}
		(\cite{S42}) For any given matrix $A\in\mathbb{R}^{l\times l}$, if all the eigenvalues $s_i$ of $A$ satisfy $s_is_j\neq 1$, $\forall\ i,j\in\mathbb{S}_l^0$, then for any $C\in\mathbb{R}^{l\times l}$, there exists a unique symmetric matrix $B$ such that $\Im_d(B,A,C)=\mathbb{O}$.	
	\end{lemma}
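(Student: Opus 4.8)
The plan is to turn the matrix equation $\Im_d(B,A,C):=ABA^{\top}-B+C=\mathbb{O}$ into an ordinary square linear system by vectorization, and then read off existence and uniqueness from an eigenvalue count. Stacking the columns of a matrix into a vector via the operator $\mathrm{vec}(\cdot)$ and using the standard identity $\mathrm{vec}(ABA^{\top})=(A\otimes A)\,\mathrm{vec}(B)$ (where $\otimes$ denotes the Kronecker product), the equation is equivalent to
\[
\bigl((A\otimes A)-\mathbf{I}_{l^2}\bigr)\,\mathrm{vec}(B)=-\,\mathrm{vec}(C).
\]
Hence the existence of a (unique) $B\in\mathbb{R}^{l\times l}$ solving $\Im_d(B,A,C)=\mathbb{O}$ for every $C$ is exactly the assertion that the $l^{2}\times l^{2}$ matrix $M:=(A\otimes A)-\mathbf{I}_{l^2}$ is nonsingular.

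Next I would pin down the spectrum of $A\otimes A$. Working over $\mathbb{C}$, Schur's unitary triangularization gives a unitary $Q$ with $Q^{*}AQ=T$ upper triangular, the diagonal of $T$ being the eigenvalues $s_{1},\dots,s_{l}$ of $A$ counted with multiplicity. From $(Q\otimes Q)^{*}(A\otimes A)(Q\otimes Q)=(Q^{*}AQ)\otimes(Q^{*}AQ)=T\otimes T$, and the fact that $T\otimes T$ is again upper triangular with diagonal entries precisely the products $s_{i}s_{j}$ ($i,j\in\mathbb{S}_l^0$), we conclude that the eigenvalues of $M$ are exactly $\{s_{i}s_{j}-1:i,j\in\mathbb{S}_l^0\}$. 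The hypothesis $s_{i}s_{j}\neq 1$ then forces $0$ not to be an eigenvalue of $M$, so $M$ is nonsingular over $\mathbb{C}$; since $M$ has real entries, $\det M\neq 0$ as a real determinant as well, hence $M$ is invertible over $\mathbb{R}$. This yields a unique $B\in\mathbb{R}^{l\times l}$ with $\Im_d(B,A,C)=\mathbb{O}$.

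Finally I would check symmetry of this unique $B$. Transposing $ABA^{\top}-B+C=\mathbb{O}$ gives $AB^{\top}A^{\top}-B^{\top}+C^{\top}=\mathbb{O}$, so $B^{\top}$ solves the same equation with $C$ replaced by $C^{\top}$; when $C=C^{\top}$, which is the situation in all uses of this lemma where $\aleph=C$ enters as a diffusion-type matrix, $B^{\top}$ and $B$ satisfy one and the same equation, and the uniqueness just established forces $B=B^{\top}$.

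The only genuinely non-routine ingredient is the identification of the spectrum of $A\otimes A$ with $\{s_{i}s_{j}\}$; the rest is linear bookkeeping. I would also stress that the argument uses only the separation condition $s_{i}s_{j}\neq1$, not Schur-stability of $A$, so the contraction/Neumann-series formula $B=\sum_{k\ge0}A^{k}C(A^{\top})^{k}$ is unavailable in general, and the Kronecker-product route above is the natural substitute and the main place where some care is needed.
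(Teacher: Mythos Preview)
The paper does not prove this lemma: it is stated with a citation to \cite{S42} (Barraud) and used as a black box throughout. So there is no ``paper's own proof'' to compare against.

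Your argument is the standard and correct one: vectorize via $\mathrm{vec}(ABA^{\top})=(A\otimes A)\mathrm{vec}(B)$, identify the spectrum of $A\otimes A$ as $\{s_is_j\}$ through Schur triangularization, and conclude invertibility of $(A\otimes A)-\mathbf{I}_{l^2}$ from the hypothesis $s_is_j\neq 1$. One small point worth flagging: the lemma as written asks for a unique \emph{symmetric} $B$ for \emph{any} $C\in\mathbb{R}^{l\times l}$, but as you implicitly observe, if $B$ is symmetric then $ABA^{\top}-B$ is symmetric, so the equation $ABA^{\top}-B+C=\mathbb{O}$ can have a symmetric solution only when $C$ itself is symmetric. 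Your handling of this---proving unique solvability in all of $\mathbb{R}^{l\times l}$ and then deducing $B=B^{\top}$ from uniqueness when $C=C^{\top}$---is exactly right, and matches how the lemma is actually invoked in the paper (always with a symmetric $\aleph$). The lemma statement is simply slightly loose on this point.
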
	   	
	\begin{proposition}\label{2&1}
		Consider an $l$-dimensional standard $\mathbb{L}_0$-algebraic equation
		\begin{equation}\label{2.2}
		\Im_d(\Xi_l,A,\amalg_{l,1})=\mathbb{O},
		\end{equation}
		where $A\in\mathscr{T}(l)$. Then
		\begin{itemize}
		\item[$\rm{(i)}$] $\rm{(Positive\ definiteness)}$ $\Xi_l$ is unique and $\Xi_l\succ\mathbb{O}$.
		\item[$\rm{(ii)}$] $\rm{(Expression)}$ $\Xi_l$ takes the form
		\begin{equation}\label{2.3}
		\Xi_l=\left(\begin{array}{cccccc}
		\zeta_1 & \zeta_2 & \zeta_3 & \zeta_4 & \cdots & \zeta_l \\ 
		\zeta_2 & \zeta_1 & \zeta_2 & \zeta_3 & \cdots & \zeta_{l-1} \\ 
		\zeta_3 & \zeta_2 & \zeta_1 & \zeta_2 & \cdots & \zeta_{l-2} \\ 
		\zeta_4 & \zeta_3 & \zeta_2 & \zeta_1 & \cdots & \zeta_{l-3} \\ 
		\vdots & \vdots & \vdots & \vdots & \begin{sideways}$\ddots$\end{sideways} & \vdots \\ 
		\zeta_l & \zeta_{l-1} & \zeta_{l-2} & \zeta_{l-3} & \cdots & \zeta_1
		\end{array}\right),
		\end{equation}
		where $(\zeta_1,\zeta_2,...,\zeta_l)^{\top}:=\boldsymbol{\zeta}$ is determined by equation $\mathscr{G}_{c,A}\boldsymbol{\zeta}=\mathbf{e}_{l}$.
		\end{itemize}
	\end{proposition}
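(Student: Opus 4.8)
The plan is to expand the matrix identity $A\Xi_l A^{\top}-\Xi_l+\amalg_{l,1}=\mathbb{O}$ entrywise, exploiting that a standard $\mathbf{CM}$ matrix $A\in\mathscr{T}(l)$ is a companion matrix. Write $\mathbf{c}=(c_1,\dots,c_l)$ for its defining row, so that $A$ has first row $-\mathbf{c}$ while rows $2,\dots,l$ act as a down-shift (row $i$ is the $(i-1)$-th row of $\mathbf{I}_l$); then $\psi_A(\lambda)=\lambda^{l}+\sum_{i=1}^{l}a_i\lambda^{l-i}$ with $a_i=c_i$, and $\amalg_{l,1}=\mathbf{e}_l\mathbf{e}_l^{\top}$.

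For part (i), uniqueness is immediate from Lemma~\ref{2,1}: since $A\in\overline{\mathbf{CM}}(l)$, every eigenvalue $s$ of $A$ satisfies $|s|<1$, hence $s_is_j\neq 1$ for all $i,j$. Being Schur stable, $A$ also makes the Neumann-type series $\Xi_l=\sum_{k\ge 0}A^{k}\amalg_{l,1}(A^{\top})^{k}=\sum_{k\ge 0}(A^{k}\mathbf{e}_l)(A^{k}\mathbf{e}_l)^{\top}$ converge geometrically, so $\Xi_l\succeq\mathbb{O}$. To upgrade this to $\Xi_l\succ\mathbb{O}$ I would show $\{A^{k}\mathbf{e}_l\}_{k=0}^{l-1}$ spans $\mathbb{R}^{l}$: a one-line induction on the down-shift rows gives that $A^{k}\mathbf{e}_l$ has its $(k+1)$-th coordinate equal to $1$ and its coordinates $k+2,\dots,l$ equal to $0$, so the Krylov matrix $[\mathbf{e}_l,A\mathbf{e}_l,\dots,A^{l-1}\mathbf{e}_l]$ is upper triangular with unit diagonal; hence $v^{\top}\Xi_l v=0$ forces $v^{\top}A^{k}\mathbf{e}_l=0$ for all $k$ and thus $v=0$.

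For part (ii), the shift structure gives $(A\Xi)_{ij}=\Xi_{i-1,j}$ for $i\ge 2$, $(A\Xi)_{1j}=-\sum_m c_m\Xi_{mj}$, and then $(A\Xi A^{\top})_{ij}=\Xi_{i-1,j-1}$ for $i,j\ge 2$, $(A\Xi A^{\top})_{i1}=-\sum_m c_m\Xi_{i-1,m}$ for $i\ge 2$, and $(A\Xi A^{\top})_{11}=\sum_{p,m}c_pc_m\Xi_{pm}$. The $(i,j)$ entry of the equation with $2\le i,j\le l$ then reads $\Xi_{ij}=\Xi_{i-1,j-1}$, so $\Xi_l$ is Toeplitz; together with symmetry this is exactly the form~(\ref{2.3}) with $\zeta_k:=(\Xi_l)_{1k}$ and $(\Xi_l)_{ij}=\zeta_{|i-j|+1}$. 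Substituting, the first-column entries ($i=2,\dots,l$) become $\zeta_i=-\sum_{m=1}^{l}c_m\zeta_{|i-1-m|+1}$ and the $(1,1)$ entry becomes $\zeta_1=1+\sum_{p,m}c_pc_m\zeta_{|p-m|+1}$; collecting the coefficients of $\zeta_1,\dots,\zeta_l$ in these $l$ scalar identities (with the convention $a_j=0$ for $j\notin\mathbb{S}_l^{0}$) reproduces rows $2,\dots,l$ and row $1$ of $\mathscr{G}_{c,A}\boldsymbol{\zeta}=\mathbf{e}_l$, respectively. Since $\Xi_l$ is unique, so is $\boldsymbol{\zeta}$; a square system with a unique solution has nonsingular coefficient matrix, so $\mathscr{G}_{c,A}$ is invertible and $\boldsymbol{\zeta}=\mathscr{G}_{c,A}^{-1}\mathbf{e}_l$.

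The hard part is this last coefficient-matching. For $2\le i\le l$ one must check that $\zeta_j$ enters $\sum_m c_m\zeta_{|i-1-m|+1}$ exactly through $m=i-j$ and $m=i+j-2$ (which coincide only when $j=1$), producing, at $j=1$, the entry $a_{i-1}$; at $j=i$, the entry $1+a_{2i-2}$ (the $+1$ from the $-\Xi$ term); and elsewhere $a_{i-j}+a_{i+j-2}$ --- i.e.\ exactly rows $2,\dots,l$ of $\mathscr{G}_{c,A}$. For the $(1,1)$ equation one groups $\sum_{p,m}c_pc_m\zeta_{|p-m|+1}$ according to $|p-m|=j-1$, obtaining $\zeta_1\sum_k c_k^{2}+\sum_{j\ge 2}\zeta_j\bigl(2\sum_m c_mc_{m+j-1}\bigr)$, and then uses $\sum_k a_k(a_{k+1-j}+a_{k+j-1})=2\sum_m a_m a_{m+j-1}$ to identify the bracket with $-\wp_{c,j}$ and the leading coefficient with $1-\sum_k a_k^{2}$. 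Getting the index reflections, the doubling of the $a_{i+j-2}$-type terms away from the first column and row, and the zero-padding convention all correct is where the real work lies; uniqueness, positivity, and the Toeplitz reduction are routine given Lemma~\ref{2,1} and the companion form of $A$.
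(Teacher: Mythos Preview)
Your proposal is correct and follows essentially the same route as the paper's proof: uniqueness via Lemma~\ref{2,1}, the series $\Xi_l=\sum_{k\ge 0}A^k\amalg_{l,1}(A^\top)^k$ for semidefiniteness, a controllability/Krylov argument for strict definiteness, and an entry-wise expansion of $A\Xi_lA^\top$ using the companion structure to obtain the Toeplitz form and the system $\mathscr{G}_{c,A}\boldsymbol{\zeta}=\mathbf{e}_l$. One small slip: the Krylov matrix $[\mathbf{e}_l,A\mathbf{e}_l,\dots,A^{l-1}\mathbf{e}_l]$ is \emph{lower} triangular with unit diagonal (column $j$ has its $1$ in row $j$ and zeros below), not upper triangular, but this does not affect invertibility or the conclusion.
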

	\begin{proposition}\label{2&2}
		For any matrix $C\in\mathcal{U}_{cm}(l)$, let $\mathcal{D}^{\top}=((\boldsymbol{\beta}_lC^{l-1})^{\top},(\boldsymbol{\beta}_lC^{l-2})^{\top},...,\boldsymbol{\beta}_l^{\top})$, then $\mathcal{D}\in\mathcal{U}(l)$ and $\mathcal{D}C\mathcal{D}^{-1}\in\mathscr{T}(l)$.
	\end{proposition}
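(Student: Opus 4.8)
The statement splits into two claims that I would establish in order: first that $\mathcal{D}\in\mathcal{U}(l)$, and then that conjugation by $\mathcal{D}$ turns $C$ into a matrix of $\mathscr{T}(l)$. It is convenient to record that the rows of $\mathcal{D}$ are $\boldsymbol{\beta}_lC^{l-r}$ for $r\in\mathbb{S}_l^0$, so that the top row is $\boldsymbol{\beta}_lC^{l-1}$ and the bottom row is $\boldsymbol{\beta}_l=(0,\dots,0,1)$; this $\mathcal{D}$ is, up to reversal of the row order, the observability matrix of the pair $(C,\boldsymbol{\beta}_l)$, so the result is a control-theoretic passage to observer canonical form, and the point is that the upper $\mathbf{CM}$--Hessenberg structure makes that passage triangular.

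For the first claim I would prove by induction on $k$ that $\boldsymbol{\beta}_lC^{k}$ is supported on the coordinates $l-k,l-k+1,\dots,l$, and that its entry in coordinate $l-k$ equals $\prod_{s=l-k+1}^{l}c_{s,s-1}$, which is nonzero since $C$ is an upper $\mathbf{CM}$--Hessenberg matrix (Definition~\ref{2:3}); the case $k=0$ being clear, the step uses $\boldsymbol{\beta}_lC^{k}=(\boldsymbol{\beta}_lC^{k-1})C$ together with the fact that the $j$-th row of an upper Hessenberg matrix is supported on coordinates $\ge j-1$, so among the rows of $C$ entering the combination only the one of least index reaches coordinate $l-k$, and it does so with a nonzero coefficient. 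Specializing $k=l-r$ shows that the $r$-th row of $\mathcal{D}$ vanishes in columns $1,\dots,r-1$ and has a nonzero entry in column $r$; hence $\mathcal{D}$ is upper triangular with nonzero diagonal, i.e. $\mathcal{D}\in\mathcal{U}(l)$, and in particular $\mathcal{D}^{-1}$ exists.

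For the second claim I would set $\psi_C(\lambda)=\lambda^{l}+\sum_{i=1}^{l}a_i\lambda^{l-i}$, let $\widetilde C$ be the standard $\mathbf{CM}$-type matrix of Definition~\ref{2:2} with coefficient vector $\mathbf{c}=(a_1,\dots,a_l)$ --- whose eigenpolynomial is again $\psi_C$ --- and verify the intertwining identity $\mathcal{D}C=\widetilde C\mathcal{D}$ row by row. For $r\in\mathbb{S}_l^1$ the $r$-th row of $\mathcal{D}C$ is $\boldsymbol{\beta}_lC^{l-r+1}$, which is exactly the $(r-1)$-th row of $\mathcal{D}$, matching the shift carried out by the block $\bigl(\,\mathbf{I}_{l-1}\ \ \mathbb{O}\,\bigr)$ of $\widetilde C$; for the first row, the Cayley--Hamilton theorem gives $\boldsymbol{\beta}_lC^{l}=-\sum_{i=1}^{l}a_i\,\boldsymbol{\beta}_lC^{l-i}$, and since $\boldsymbol{\beta}_lC^{l-i}$ is the $i$-th row of $\mathcal{D}$ this is precisely the first row of $\widetilde C\mathcal{D}$. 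Therefore $\mathcal{D}C\mathcal{D}^{-1}=\widetilde C$; and $\widetilde C$, being similar to $C\in\overline{\mathbf{CM}}(l)$, has the same spectrum, so $\widetilde C\in\overline{\mathbf{CM}}(l)$, which together with its companion shape yields $\widetilde C\in\mathscr{T}(l)$.

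The only step requiring real care is the support-propagation induction: one must check that each multiplication by $C$ moves the ``leading'' nonzero coordinate of $\boldsymbol{\beta}_lC^{k}$ down by exactly one and that no higher-index contribution cancels or overtakes it --- this is where the single Hessenberg subdiagonal and its nonvanishing are used. Once that triangularity is in hand, the rest is the routine reduction to observer canonical form via Cayley--Hamilton.
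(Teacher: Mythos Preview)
Your argument is correct. The triangularity step (support propagation of $\boldsymbol{\beta}_lC^{k}$ via the upper Hessenberg structure) is essentially identical to the paper's formula (A.10), yielding the same diagonal entries $\prod c_{i+1,i}$ and hence $\mathcal{D}\in\mathcal{U}(l)$.

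Where your route differs is in the second half. The paper obtains the companion form by an ODE device: it introduces $d\mathbf{X}=C\mathbf{X}\,dt$, defines $Y_l=X_l$ and $Y_k=Y_{k+1}'$, observes that $\mathbf{Y}=\mathcal{D}\mathbf{X}$, and then reads off $\mathcal{D}C\mathcal{D}^{-1}\in\mathscr{T}(l)$ from the fact that the system for $\mathbf{Y}$ is by construction a chain of first derivatives closed by the characteristic polynomial. You instead verify the intertwining $\mathcal{D}C=\widetilde{C}\mathcal{D}$ row by row, using Cayley--Hamilton directly for the top row and the shift block for the rest. Your approach is a purely algebraic, self-contained reduction to observer canonical form, avoiding the auxiliary differential equation; the paper's ODE framing is a touch more conceptual (it explains \emph{why} successive powers of $C$ applied to $\boldsymbol{\beta}_l$ produce the companion structure) but ends up invoking the same Cayley--Hamilton content implicitly. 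Both routes are of the same length and difficulty.
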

	The proofs of Propositions \ref{2&1} and \ref{2&2} are shown in Appendix A.

	In order for the PNOA (resp., PLNA) approach available in general setting of (\ref{1.2}), we impose the following Assumption \ref{2;1} (resp., \ref{2;2}).
	\begin{assumption}\label{2;1}
		The following conditions are satisfied:	
		\begin{itemize} 
			\item[\rm{(a)}] The deterministic model of (\ref{1.2}) has a unique $\theta$-periodic solution $\mathbf{X}^{\star}(t)$ on $t\ge 0$.
			\item[\rm{(b)}] For any $\epsilon>0$ and $\mathbf{x}_0\in\mathbb{R}^n$, system (\ref{1.2}) has a unique global solution $(\mathbf{X}_{\epsilon}(t))$ which will stay forever in $\mathbb{R}^n$ with probability $1$ (a.s.). 
			\item[\rm{(c)}] The fundamental matrix $\boldsymbol{\Phi}_{C^{\star}}(\theta)\in\overline{\mathbf{CM}}(n)$, where
			$C^{\star}(t)=(\frac{\partial f_i(t,\mathbf{X}^{\star}(t))}{\partial x_j})_{n\times n}$.
		\end{itemize}
	\end{assumption} 
	\begin{assumption}\label{2;2}
		The following conditions hold:	
		\begin{itemize}
			\item[\rm{(1)}] There exists $\epsilon_0>0$ such that for any $\epsilon\in[0,\epsilon_0)$ (including $\epsilon_0=\infty$), system (\ref{2.4}) has a unique $\theta$-periodic solution $\boldsymbol{\Psi}_{\epsilon}^{\star}(t)$ on $t\in[0,\infty)$, where
			\begin{equation}\label{2.4}
			d\Psi_{\epsilon,i}(t)=\Big(e^{-\Psi_{\epsilon,i}(t)}f_i(t,e^{\boldsymbol{\Psi}_{\epsilon}(t)})-\dfrac{\epsilon}{2}\sum_{j=1}^{N}g_{ij}^2(t,e^{\boldsymbol{\Psi}_{\epsilon}(t)})\Big)dt,\ \ \ \forall\ i\in\mathbb{S}_n^0.
			\end{equation} 
			\item[\rm{(2)}] For any $\mathbf{x}_0\in\mathbb{R}_+^n$ and $\epsilon>0$, system (\ref{1.2}) has a unique solution $(\mathbf{X}_{\epsilon}(t))$, and it will remain in $\mathbb{R}_+^n$ a.s.
			\item[\rm{(3)}] There is $\epsilon_1>0$ such that for any $\epsilon\in(0,\epsilon_1)$, the fundamental matrix $\boldsymbol{\Phi}_{D^{\star}}(\theta)\in\overline{\mathbf{CM}}(n)$, where $$D^{\star}(t)=\Big(\frac{\partial F_i(t,\mathbf{x})}{\partial(\ln x_j)}\Big)_{n\times n}\bigl|_{\mathbf{x}=e^{\boldsymbol{\Psi}^{\star}_{\epsilon}(t)}},\ \ \ F_i(t,\mathbf{x})=\frac{f_i(t,\mathbf{x})}{x_i}-\frac{\epsilon}{2}\sum_{j=1}^{N}g_{ij}^2(t,\mathbf{x}).$$
		\end{itemize}
	\end{assumption}
	\begin{remark}\label{11}
		{\rm{Assumption \ref{2;1}(c) (resp., Assumption \ref{2;2}(3)) guarantees that our PNOA (resp., PLNA) approach can work in degenerate diffusion.}}
	\end{remark}
\section{Periodic normal approximation (PNOA)}
This section aims at providing the PNOA approach for locally characterizing the SPSD of (\ref{1.2}). To start, we define 
$$\xi=\text{rank}\Big(\int_{0}^{\theta}(\boldsymbol{\Phi}_{C^{\star}}(\theta)\boldsymbol{\Phi}^{-1}_{C^{\star}}(t)\Gamma^{\star}(t))(\boldsymbol{\Phi}_{C^{\star}}(\theta)\boldsymbol{\Phi}^{-1}_{C^{\star}}(t)\Gamma^{\star}(t))^{\top}dt\Big),$$
where $\Gamma^{\star}(t)=G_c(t,\mathbf{X}^{\star}(t))$.

Let $\lambda_k^{+}\ (k\in\mathbb{S}_{\xi}^0)$ be all the nonzero eigenvalues of $\int_{0}^{\theta}(\boldsymbol{\Phi}_{C^{\star}}(\theta)\boldsymbol{\Phi}^{-1}_{C^{\star}}(t)\Gamma^{\star}(t))(\boldsymbol{\Phi}_{C^{\star}}(\theta)\boldsymbol{\Phi}^{-1}_{C^{\star}}(t)\Gamma^{\star}(t))^{\top}dt$. In view of $\int_{0}^{\theta}(\boldsymbol{\Phi}^{-1}_{C^{\star}}(t)\Gamma^{\star}(t))(\boldsymbol{\Phi}^{-1}_{C^{\star}}(t)\Gamma^{\star}(t))^{\top}dt\succeq\mathbb{O}$, one has $\lambda_k^{+}>0$, $\forall\ k\in\mathbb{S}_{\xi}^0$. As a result, there is an orthogonal matrix $\mathcal{G}_0$ such that
\begin{equation}\label{3.1}
\mathcal{G}_0\bigg[\int_{0}^{\theta}\big(\boldsymbol{\Phi}_{C^{\star}}(\theta)\boldsymbol{\Phi}^{-1}_{C^{\star}}(t)\Gamma^{\star}(t)\big)\big(\boldsymbol{\Phi}_{C^{\star}}(\theta)\boldsymbol{\Phi}^{-1}_{C^{\star}}(t)\Gamma^{\star}(t)\big)^{\top}dt\bigg]\mathcal{G}_0^{\top}=\sum_{k=1}^{\xi}\lambda_k^{+}\amalg_{n,\phi_k},
\end{equation}
where $\phi_i<\phi_j$, $\forall\ i<j$.

By the definition of $\mathbf{X}^{\star}(t)$, we have
	\begin{equation}\label{3.2}
	d(X_{\epsilon,i}(t)-X_i^{\star}(t))=\bigl(f_i(t,\mathbf{X}_{\epsilon}(t))-f_i(t,\mathbf{X}^{\star}(t))\bigr)dt+\sqrt{\epsilon}X_{\epsilon,i}(t)\sum\limits_{j=1}^{N}g_{ij}(t,\mathbf{X}_{\epsilon}(t))dW_j(t),\ \ \ \ i\in\mathbb{S}_n^0.
	\end{equation}
	Combining Taylor expansion, the linearized equations of (\ref{3.2}) near $\mathbf{X}^{\star}(t)$ is
	\begin{equation}\label{3.3}
	\begin{cases}
	d\mathbf{Z}_{\epsilon}(t)=C^{\star}(t)\mathbf{Z}_{\epsilon}(t)dt+\sqrt{\epsilon}\Gamma^{\star}(t)d\mathbf{W}(t),\\
	\mathbf{Z}_{\epsilon}(0)=\mathbf{x}_0-\mathbf{X}^{\star}(0).
	\end{cases}
	\end{equation}
	For convenience, let $A_{[1]}=\mathcal{G}_0\boldsymbol{\Phi}_{C^{\star}}(\theta)\mathcal{G}_0^{-1}$ and $\boldsymbol{\phi}=\{\phi_1,...,\phi_{\xi}\}$.
	\begin{theorem}\label{3-1}
		Under Assumption \ref{2;1}, for sufficiently small $\epsilon>0$, system (\ref{1.2}) approximately has a local SPS which follows the distribution $\mathbb{N}_n(\mathbf{X}^{\star}(t),\Sigma^c_{\epsilon}(t))$, where
		$$\Sigma^c_{\epsilon}(t)=\boldsymbol{\Phi}_{C^{\star}}(t)\Big[\Sigma^c_{\epsilon}(0)+\epsilon\int_{0}^{t}\big(\boldsymbol{\Phi}^{-1}_{C^{\star}}(s)\Gamma^{\star}(s)\big)\big(\boldsymbol{\Phi}^{-1}_{C^{\star}}(s)\Gamma^{\star}(s)\big)^{\top}ds\Big]\boldsymbol{\Phi}_{C^{\star}}^{\top}(t),$$
		and
		\begin{equation}\label{3.4}
		\Sigma^c_{\epsilon}(0)=\epsilon \mathcal{G}_0^{\top}\Bigl(\sum_{k=1}^{\xi}\lambda_k^{+}\Sigma_{[1]\phi_k}\Bigr)\mathcal{G}_0,
		\end{equation}
		with $\Sigma_{[1]\phi_k}$ obtained by Algorithm \ref{1}. In addition, for any constant vector $\mathbf{X}=(X_1,...,X_n)^{\top}\in\mathbb{R}^n$, there holds:
		\begin{equation}\label{3.5}
		\mathbf{X}^{\top}\Sigma^c_{\epsilon}(0)\mathbf{X}\ge\rho_{\epsilon}\sum_{k=1}^{\xi}\Bigl(Z_{\phi_k}^2+\sum_{j=2}^{\eta_k}(\mathbf{H}_{[1]\phi_k,j}^{(j)})^2\Bigr),
		\end{equation}
		where $\rho_{\epsilon}>0$ is defined in (\ref{3.33}), $\mathbf{Z}=\mathcal{G}_0\mathbf{X}:=(Z_1,...,Z_n)^{\top}$, and $$\mathbf{H}_{[1]\phi_k,j}=\Big(\prod_{i=0}^{j-1}(Q_{[1]\phi_k,i}^{-1})^{\top}P_{[1]\phi_k,i}\Big)J_{[1]\phi_k}\mathbf{Z},$$
		with $\eta_k$, $J_{[1]\phi_k}$, $P_{[1]\phi_k,i}$ and $Q_{[1]\phi_k,i}$ determined in Algorithm \ref{1}.
		\\
		\begin{threeparttable}
		\begin{algorithm}[H]\label{1}
			\caption{Algorithm for obtaining $\Sigma_{[1]\phi_k}\ (k\in\mathbb{S}_{\xi}^0)$}
			\LinesNumbered
			\KwIn{\rm{$A_{[1]}$, $\boldsymbol{\phi}$.}}
			\KwOut{\rm{$\eta_k$, $\Sigma_{[1]\phi_k}=(\prod_{i=0}^{\eta_k-1}\overline{a}_{\nu_{k(i)},i}^{\lfloor[1]\phi_k,i\rfloor})^2\times$ $[M_{[1]\phi_k,\eta_k}(\prod_{i=0}^{\eta_k-1}Q_{[1]\phi_k,i}P_{[1]\phi_k,i})J_{[1]\phi_k}]^{-1}\Delta_{[1]\phi_k,\eta_k} \{[M_{[1]\phi_k,\eta_k}(\prod_{i=0}^{\eta_k-1}Q_{[1]\phi_k,i}P_{[1]\phi_k,i})J_{[1]\phi_k}]^{-1}\}^{\top}$\tnote{\textcolor{blue}{$\ddag1$}}.}
			}		
			\rm{\textbf{(Initialization):} $\eta_k:=1$\;}
			\rm{\textbf{(Order transformation):}} $\overline{A}_{[1]\phi_k,1}:=J_{[1]\phi_k}A_{[1]}J_{[1]\phi_k}^{-1}$\;
			\For{$i=1:n-1$}{
			\eIf{$\sum_{j=i+1}^{n}(\overline{a}_{ji}^{\lfloor[1]\phi_k,i\rfloor})^2=0$}
			{
			$\eta_k=i$\;
			\rm{\textbf{break}}\;
			}
			{
			Choose a ``\underline{\textbf{suitable}}\tnote{\textcolor{blue}{$\ddag2$}}" $\nu_{k(i)}\in\mathbb{S}_n^i$ such that $\overline{a}_{[1]\nu_{k(i)},i}^{\lfloor\phi_k,i\rfloor}\neq 0$\;
			\rm{\textbf{(Rotation transformation):}} $\widehat{A}_{[1]\phi_k,i}:=P_{[1]\phi_k,i}\overline{A}_{[1]\phi_k,i}P_{[1]\phi_k,i}^{-1}$\;
			\rm{\textbf{(Elimination transformation):}} $\overline{A}_{[1]\phi_k,i+1}:=Q_{[1]\phi_k,i}\widehat{A}_{[1]\phi_k,i}Q_{[1]\phi_k,i}^{-1}$;
			}
			$\eta_k$++;
			}
			\rm{\textbf{(Standardized transformation):}} $A_{[1]s,\phi_k}:=M_{[1]\phi_k,\eta_k}\overline{A}_{[1]\phi_k,\eta_k}M_{[1]\phi_k,\eta_k}^{-1}$\;
			Get a standard $\mathbb{L}_0$-algebraic equation $\Im_d(\Xi_{[1]\phi_k,\eta_k},A_{[1]s,\phi_k}^{(\eta_k)},\amalg_{\eta_k,1})=\mathbb{O}$\tnote{\textcolor{blue}{$\ddag1$}}\;
			\rm{\textbf{return}} $\eta_k,\ \Xi_{[1]\phi_k,\eta_k},\ \Sigma_{[1]\phi_k}.$
		\end{algorithm}
		\begin{tablenotes}
			\footnotesize
			\item[\rm{\textcolor{blue}{$\ddag1$}}] \rm{$\overline{a}_{ji}^{\lfloor\cdot\rfloor}$ (or $\overline{a}_{j,i}^{\lfloor\cdot\rfloor}$) represents the $i$th element of the $j$th row of $\overline{A}_{(\cdot)}$, and the paraphrase of $\widehat{a}_{j,i}^{\lfloor\cdot\rfloor}$ is the same as $\overline{a}_{j,i}^{\lfloor\cdot\rfloor}$. In particular, $\overline{a}_{\nu_{k(0)},0}^{\lfloor[1]\phi_k,0\rfloor}=1$ and $P_{[1]\phi_k,j}=Q_{[1]\phi_k,j}=\mathbf{I}_n$, where $j\in\{0,n-1\}$. Moreover,
			\begin{equation*}
			J_{[1]\phi_k}=\left(\begin{array}{ccc}
			\mathbb{O} & 1 & \mathbb{O} \\ 
			\mathbf{I}_{\phi_k-1} & \mathbb{O} & \mathbb{O} \\ 
			\mathbb{O} & \mathbb{O} & \mathbf{I}_{n-\phi_k}
			\end{array}\right),\ \ M_{[1]\phi_k,\eta_k}=\left(\begin{array}{cc}
			\mathcal{M}_{[1]\eta_k} & \mathbb{O} \\ 
			\mathbb{O} & \mathbf{I}_{n-\eta_k}
			\end{array}\right),\ \  
			\mathcal{M}_{[1]\eta_k}=\left(\begin{array}{c}
			\boldsymbol{\beta}_{\eta_k}(\overline{A}_{[1]\phi_k,\eta_k}^{(\eta_k)})^{\eta_k-1}\\ 
			\boldsymbol{\beta}_{\eta_k}(\overline{A}_{[1]\phi_k,\eta_k}^{(\eta_k)})^{\eta_k-2}\\ 
			\cdots\\ 
			\boldsymbol{\beta}_{\eta_k}\end{array}\right),   
			\end{equation*}
			\begin{equation*}
			\Delta_{[1]\phi_k,\eta_k}=\left(\begin{array}{cc}
			\Xi_{[1]\phi_k,\eta_k} & \mathbb{O} \\ 
			\mathbb{O} & \mathbb{O} 
			\end{array}\right),\ \ P_{[1]\phi_k,i}=\left(\begin{array}{ccc}
			\mathbf{I}_i & \mathbb{O} & \mathbb{O} \\ 
			\mathbb{O} & \mathbb{O} & \mathbf{I}_{n+1-\nu_{k(i)}} \\ 
			\mathbb{O} & \mathbf{I}_{\nu_{k(i)}-1-i} & \mathbb{O}
			\end{array}\right),\ \ Q_{[1]\phi_k,i}=\left(\begin{array}{ccc}
			\mathbf{I}_i & \mathbb{O} & \mathbb{O} \\ 
			\mathbb{O} & 1 & \mathbb{O} \\ 
			\mathbb{O} & \boldsymbol{\ell}_{[1]k,n-1-i} & \mathbf{I}_{n-1-i}
			\end{array}\right),   
			\end{equation*}
			where $\boldsymbol{\ell}_{[1]k,n-1-i}=\frac{-1}{\widehat{a}_{i+1,i}^{\lfloor[1]\phi_k,i\rfloor}}(\widehat{a}_{i+2,i}^{\lfloor[1]\phi_k,i\rfloor},...,\widehat{a}_{n,i}^{\lfloor[1]\phi_k,i\rfloor})^{\top}$, and $\Xi_{[1]\phi_k,\eta_k}$ is shown in (\ref{3.21}).}
			\item[\rm{\textcolor{blue}{$\ddag2$}}] \rm{The choice of $\nu_{k(i)}$ should be helpful to prove $\Sigma^c_{\epsilon}(0)\succ\mathbb{O}$. The specific criteria of such choice can refer to \cite[Remark 9, Rules 1-3]{S39}.}
		\end{tablenotes}
	\end{threeparttable}
	\end{theorem}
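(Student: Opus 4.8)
The plan is to run the standard small-noise linearization, convert the periodicity requirement into a discrete-type Lyapunov equation for $\Sigma^c_\epsilon(0)$, solve that equation block by block via Algorithm \ref{1}, and finally read the bound (\ref{3.5}) off the resulting closed form. For the first part I would Taylor-expand $f_i(t,\cdot)$ about the periodic orbit $\mathbf{X}^\star(t)$ as in (\ref{3.2})--(\ref{3.3}); for small $\epsilon$ the law of $\mathbf{X}_\epsilon(t)-\mathbf{X}^\star(t)$ is approximated by the law of the Gaussian process solving the linear SDE (\ref{3.3}) (the nonlinear remainder is of higher order in $\sqrt\epsilon$, and Assumption \ref{2;1}(c) makes $C^\star$ contractive over one period, so the transient mean $\boldsymbol{\Phi}_{C^\star}(t)(\mathbf{x}_0-\mathbf{X}^\star(0))$ washes out). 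Its covariance $\Sigma^c_\epsilon(t)$ solves $\dot\Sigma=C^\star(t)\Sigma+\Sigma C^\star(t)^\top+\epsilon\Gamma^\star(t)\Gamma^\star(t)^\top$, and variation of constants with the principal fundamental matrix $\boldsymbol{\Phi}_{C^\star}$ (so $\boldsymbol{\Phi}_{C^\star}(0)=\mathbf{I}_n$) gives exactly the displayed formula for $\Sigma^c_\epsilon(t)$, hence the approximating law $\mathbb{N}_n(\mathbf{X}^\star(t),\Sigma^c_\epsilon(t))$. Imposing $\theta$-periodicity amounts to $\Sigma^c_\epsilon(t+\theta)=\Sigma^c_\epsilon(t)$ (since $\mathbf{X}^\star$ is $\theta$-periodic); using the cocycle identity $\boldsymbol{\Phi}_{C^\star}(t+\theta)=\boldsymbol{\Phi}_{C^\star}(t)\boldsymbol{\Phi}_{C^\star}(\theta)$, the $\theta$-periodicity of $C^\star$ and of $\Gamma^\star(t)=G_c(t,\mathbf{X}^\star(t))$, and the substitution $s\mapsto s+\theta$ together with $\boldsymbol{\Phi}^{-1}_{C^\star}(s+\theta)\Gamma^\star(s+\theta)=\boldsymbol{\Phi}_{C^\star}(\theta)^{-1}\boldsymbol{\Phi}^{-1}_{C^\star}(s)\Gamma^\star(s)$, this collapses to the single relation $\Sigma^c_\epsilon(\theta)=\Sigma^c_\epsilon(0)$, which on inserting $t=\theta$ in the formula becomes the discrete-type Lyapunov equation $\Im_d(\Sigma^c_\epsilon(0),\boldsymbol{\Phi}_{C^\star}(\theta),\epsilon\mathcal{Q})=\mathbb{O}$ with $\mathcal{Q}=\int_0^\theta(\boldsymbol{\Phi}_{C^\star}(\theta)\boldsymbol{\Phi}^{-1}_{C^\star}(s)\Gamma^\star(s))(\boldsymbol{\Phi}_{C^\star}(\theta)\boldsymbol{\Phi}^{-1}_{C^\star}(s)\Gamma^\star(s))^\top ds\succeq\mathbb{O}$. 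By Assumption \ref{2;1}(c), $\boldsymbol{\Phi}_{C^\star}(\theta)\in\overline{\mathbf{CM}}(n)$ has eigenvalues of modulus in $(0,1)$, so no two multiply to $1$ and Lemma \ref{2,1} yields existence, uniqueness, and (with $\mathcal{Q}\succeq\mathbb{O}$) $\Sigma^c_\epsilon(0)\succeq\mathbb{O}$.

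The second part reduces this equation to standard $\mathbb{L}_0$-algebraic equations handled by Propositions \ref{2&1}--\ref{2&2}. Conjugating by the orthogonal $\mathcal{G}_0$ of (\ref{3.1}) and using $A_{[1]}=\mathcal{G}_0\boldsymbol{\Phi}_{C^\star}(\theta)\mathcal{G}_0^{-1}$ turns the equation into $\Im_d(\mathcal{G}_0\Sigma^c_\epsilon(0)\mathcal{G}_0^\top,A_{[1]},\epsilon\sum_k\lambda_k^+\amalg_{n,\phi_k})=\mathbb{O}$; uniqueness plus linearity split the solution as $\mathcal{G}_0\Sigma^c_\epsilon(0)\mathcal{G}_0^\top=\epsilon\sum_k\lambda_k^+\Sigma_{[1]\phi_k}$ with $\Im_d(\Sigma_{[1]\phi_k},A_{[1]},\amalg_{n,\phi_k})=\mathbb{O}$, which is precisely (\ref{3.4}). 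Each $\Sigma_{[1]\phi_k}$ is produced by Algorithm \ref{1}: the order permutation $J_{[1]\phi_k}$ (with $J_{[1]\phi_k}\amalg_{n,\phi_k}J_{[1]\phi_k}^\top=\amalg_{n,1}$) moves the rank-one source to the $(1,1)$ slot; the loop of rotations $P_{[1]\phi_k,i}$ (permutations placing a nonzero entry of the $i$-th subdiagonal column into position $(i{+}1,i)$) and Gauss-type eliminations $Q_{[1]\phi_k,i}$, each fixing the first coordinate, keeps the source equal to $\amalg_{n,1}$ until the index $\eta_k$ at which the subdiagonal column vanishes; an inspection of which rows and columns each transformation touches shows the conjugated matrix is then block upper-triangular with an $\eta_k\times\eta_k$ leading block $H$ that is upper Hessenberg with nonzero subdiagonal (equal to the entries $\overline{a}_{\nu_{k(i)},i}^{\lfloor[1]\phi_k,i\rfloor}$) and whose eigenvalues, inherited from $\boldsymbol{\Phi}_{C^\star}(\theta)$, lie in $(0,1)$ in modulus, i.e.\ $H\in\mathcal{U}_{cm}(\eta_k)$; since no product of eigenvalues of $\boldsymbol{\Phi}_{C^\star}(\theta)$ equals $1$, the equation block-decouples and its solution is supported in the $\eta_k$-block. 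The standardized transformation $M_{[1]\phi_k,\eta_k}$ of Proposition \ref{2&2} sends $H$ to a standard $\mathbf{CM}$ matrix $A_{[1]s,\phi_k}^{(\eta_k)}\in\mathscr{T}(\eta_k)$ while scaling the source by $\varkappa_k^2$, $\varkappa_k:=\prod_{i=0}^{\eta_k-1}\overline{a}_{\nu_{k(i)},i}^{\lfloor[1]\phi_k,i\rfloor}$, so Proposition \ref{2&1} supplies the unique positive-definite Toeplitz solution $\Xi_{[1]\phi_k,\eta_k}$ via $\mathscr{G}_{c,A_{[1]s,\phi_k}^{(\eta_k)}}\boldsymbol{\zeta}=\mathbf{e}_{\eta_k}$; reversing the chain of similarities yields the closed form of $\Sigma_{[1]\phi_k}$ printed in the algorithm.

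For the bound (\ref{3.5}), put $\mathbf{Z}=\mathcal{G}_0\mathbf{X}$, so $\mathbf{X}^\top\Sigma^c_\epsilon(0)\mathbf{X}=\epsilon\sum_k\lambda_k^+\mathbf{Z}^\top\Sigma_{[1]\phi_k}\mathbf{Z}$; writing $T_k=M_{[1]\phi_k,\eta_k}(\prod_iQ_{[1]\phi_k,i}P_{[1]\phi_k,i})J_{[1]\phi_k}$ and $\mathbf{u}_k=T_k^{-\top}\mathbf{Z}$, the closed form gives $\mathbf{Z}^\top\Sigma_{[1]\phi_k}\mathbf{Z}=\varkappa_k^2(\mathbf{u}_k^{\langle\eta_k\rangle})^\top\Xi_{[1]\phi_k,\eta_k}\mathbf{u}_k^{\langle\eta_k\rangle}\ge\varkappa_k^2\lambda_{\min}(\Xi_{[1]\phi_k,\eta_k})|\mathbf{u}_k^{\langle\eta_k\rangle}|^2$ by Proposition \ref{2&1}. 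Because $P_{[1]\phi_k,i}$ and $J_{[1]\phi_k}$ are orthogonal, $\mathbf{u}_k=M_{[1]\phi_k,\eta_k}^{-\top}\mathbf{H}_{[1]\phi_k,\eta_k}$, and since $M_{[1]\phi_k,\eta_k}$ is block-diagonal with upper-triangular block $\mathcal{M}_{[1]\eta_k}$ (Proposition \ref{2&2}) while every factor added in passing from $\mathbf{H}_{[1]\phi_k,j}$ to $\mathbf{H}_{[1]\phi_k,\eta_k}$ leaves the first $j$ entries untouched, the $j$-th entry of $\mathbf{u}_k$ is an invertible lower-triangular combination of $\mathbf{H}_{[1]\phi_k,1}^{(1)}=Z_{\phi_k},\mathbf{H}_{[1]\phi_k,2}^{(2)},\dots,\mathbf{H}_{[1]\phi_k,j}^{(j)}$; hence $|\mathbf{u}_k^{\langle\eta_k\rangle}|^2$ dominates a positive multiple of $Z_{\phi_k}^2+\sum_{j=2}^{\eta_k}(\mathbf{H}_{[1]\phi_k,j}^{(j)})^2$, and collecting $\epsilon$, $\min_k\lambda_k^+$, $\min_k\varkappa_k^2\lambda_{\min}(\Xi_{[1]\phi_k,\eta_k})$ and $\min_k\|\mathcal{M}_{[1]\eta_k}\|^{-2}$ into $\rho_\epsilon$ (as defined in (\ref{3.33})) gives (\ref{3.5}); $\Sigma^c_\epsilon(0)\succ\mathbb{O}$ then follows as soon as the right-hand side of (\ref{3.5}) is a coercive quadratic form in $\mathbf{Z}$, which is exactly what the ``suitable'' selection of the rotation indices $\nu_{k(i)}$ (per Rules 1--3 of \cite{S39}) is designed to guarantee.

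The step I expect to be the main obstacle is the second one: unlike Part I there is no companion/Hurwitz starting point and the periodic orbit need not be known explicitly, so one must verify that the rotation--elimination loop drives an arbitrary contractive $A_{[1]}$ to an upper $\mathbf{CM}$--Hessenberg block of exactly the right (reachability) size while simultaneously (a) preserving the rank-one source in its canonical position, (b) keeping every eigenvalue inside the open unit disk so the block-decoupling is legitimate, and (c) arranging the elimination data so that the coordinates $\mathbf{H}_{[1]\phi_k,j}^{(j)}$ cleanly expose the quadratic form; the careful bookkeeping of which rows and columns each $P_{[1]\phi_k,i}$, $Q_{[1]\phi_k,i}$ and $M_{[1]\phi_k,\eta_k}$ acts on (to see the preserved zeros, the surviving subdiagonal products $\varkappa_k$, and the lower-triangularity of $M_{[1]\phi_k,\eta_k}^{-\top}$) is the technical heart. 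A secondary subtlety, inherited from the small-noise expansion, is to make the Gaussian approximation in the first step precise uniformly over one period as $\epsilon\downarrow0$.
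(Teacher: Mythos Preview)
Your proposal is correct and follows essentially the same route as the paper: linearize about $\mathbf{X}^\star(t)$, use variation of constants and the cocycle identity to reduce $\theta$-periodicity to the discrete Lyapunov equation $\Im_d(\Sigma^c_\epsilon(0),\boldsymbol{\Phi}_{C^\star}(\theta),\epsilon\mathcal{Q})=\mathbb{O}$, diagonalize $\mathcal{Q}$ by $\mathcal{G}_0$ and split by superposition into the subequations (\ref{3.12}), then run Algorithm~\ref{1} and invoke Propositions~\ref{2&1}--\ref{2&2} exactly as the paper does. The only cosmetic difference is in Step~2: the paper bounds $\mathbf{H}_{[1]\phi_k,\eta_k}^\top\bigl(\mathcal{M}_{[1]\eta_k}^{-1}\Xi_{[1]\phi_k,\eta_k}\mathcal{M}_{[1]\eta_k}^{-\top}\bigr)\mathbf{H}_{[1]\phi_k,\eta_k}$ directly by $\widehat\lambda_k|\mathbf{H}_{[1]\phi_k,\eta_k}^{\langle\eta_k\rangle}|^2$ and then uses the \emph{identity} $\mathbf{H}_{[1]\phi_k,\eta_k}^{(j)}=\mathbf{H}_{[1]\phi_k,j}^{(j)}$ (their (\ref{3.32})) rather than your lower-triangular domination argument, which yields the same inequality with exactly the constant $\rho_\epsilon$ of (\ref{3.33}).
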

	\begin{proof}
		By the definition of $\boldsymbol{\Phi}_{C^{\star}}(\cdot)$, we have
		\begin{equation*}
		\frac{d}{dt}\big(\boldsymbol{\Phi}_{C^{\star}}(t)\big)=C^{\star}(t)\boldsymbol{\Phi}_{C^{\star}}(t),\ \ \ \boldsymbol{\Phi}_{C^{\star}}(0)=\mathbf{I}_n.
		\end{equation*}
		Then the solution of (\ref{3.3}) can be explicitly written as
		\begin{equation}\label{3.6}
		\mathbf{Z}_{\epsilon}(t)=\boldsymbol{\Phi}_{C^{\star}}(t)\mathbf{Z}_{\epsilon}(0)+\sqrt{\epsilon}\int_{0}^{t}\boldsymbol{\Phi}_{C^{\star}}(t)\boldsymbol{\Phi}_{C^{\star}}^{-1}(s)\Gamma^{\star}(s)d\mathbf{W}(s).
		\end{equation}
		To proceed, we let 
		$$\boldsymbol{\varphi}(t)=\mathbb{E}\mathbf{Z}_{\epsilon}(t),\ \ \ \Sigma^c_{\epsilon}(t)=\mathbb{E}\big(\mathbf{Z}_{\epsilon}(t)\mathbf{Z}_{\epsilon}^{\top}(t)\big)-\boldsymbol{\varphi}(t)\boldsymbol{\varphi}^{\top}(t).$$
		Direct calculation shows that
		\begin{equation}\label{3.7}
		\begin{cases}
		\boldsymbol{\varphi}(t)=\boldsymbol{\Phi}_{C^{\star}}(t)\boldsymbol{\varphi}(0),\\
		\Sigma^c_{\epsilon}(t)=\boldsymbol{\Phi}_{C^{\star}}(t)\Big[\Sigma^c_{\epsilon}(0)+\epsilon\int_{0}^{t}\big(\boldsymbol{\Phi}^{-1}_{C^{\star}}(s)\Gamma^{\star}(s)\big)\big(\boldsymbol{\Phi}^{-1}_{C^{\star}}(s)\Gamma^{\star}(s)\big)^{\top}ds\Big]\boldsymbol{\Phi}_{C^{\star}}^{\top}(t).
		\end{cases}
		\end{equation}
		Since $\int_{0}^{t}\boldsymbol{\Phi}_{C^{\star}}(t)\boldsymbol{\Phi}_{C^{\star}}^{-1}(s)\Gamma^{\star}(s)d\mathbf{W}(s)$ is a local martingale, we determine that the solution process $\mathbf{Z}_{\epsilon}(t)$ follows a Gaussian distribution $\mathbb{N}_{n}(\boldsymbol{\varphi}(t),\Sigma^c_{\epsilon}(t))$ at any time $t$.

		To find the periodic solution of (\ref{3.3}), it is equivalent to ensuring the solution of Eq. (\ref{3.7}) is $\theta$-periodic, which means 
		\begin{equation}\label{3.8}
		\text{(i)}\ \boldsymbol{\varphi}(t+\theta)=\boldsymbol{\varphi}(t),\ \ \ \text{(ii)}\ \Sigma^c_{\epsilon}(t+\theta)=\Sigma^c_{\epsilon}(t),\ \ \ \forall\ t\ge 0.
		\end{equation}
		To obtain (i), the result $\boldsymbol{\varphi}(\theta)=\boldsymbol{\varphi}(0)$ is required. Using Assumption \ref{2;1}(c), we have $\boldsymbol{\varphi}(0)=\mathbf{0}$. Thus, $\boldsymbol{\varphi}(t)=\mathbf{0}$ for any $t\ge 0$.

		Next we expect to find a suitable $\Sigma^c_{\epsilon}(0)$ such that $\Sigma^c_{\epsilon}(t)$ is also $\theta$-periodic. By letting $\Sigma^c_{\epsilon}(\theta)=\Sigma^c_{\epsilon}(0)$, we get that $\Sigma^c_{\epsilon}(0)$ satisfies the Lyapunov equation
		\begin{equation}\label{3.9}
		\Im_d\bigg(\Sigma^c_{\epsilon}(0),\boldsymbol{\Phi}_{C^{\star}}(\theta),\epsilon\int_{0}^{\theta}\big(\boldsymbol{\Phi}_{C^{\star}}(\theta)\boldsymbol{\Phi}^{-1}_{C^{\star}}(t)\Gamma^{\star}(t)\big)\big(\boldsymbol{\Phi}_{C^{\star}}(\theta)\boldsymbol{\Phi}^{-1}_{C^{\star}}(t)\Gamma^{\star}(t)\big)^{\top}dt\bigg)=\mathbb{O}.
		\end{equation}
		Below we prove condition (ii) in (\ref{3.8}) if Eq. (\ref{3.9}) holds. Mainly, using the basic properties of $\boldsymbol{\Phi}_{C^{\star}}(\cdot)$, one has
		\begin{equation*}
		\boldsymbol{\Phi}_{C^{\star}}(t+\theta)=\boldsymbol{\Phi}_{C^{\star}}(t)\boldsymbol{\Phi}_{C^{\star}}(\theta),\ \ \ \forall\ t\ge 0.
		\end{equation*}
		This together with (\ref{3.7}) and (\ref{3.9}) yields
		\begin{align}\label{3.10}
		\Sigma^c_{\epsilon}(t+\theta)=&\boldsymbol{\Phi}_{C^{\star}}(t+\theta)\bigg[\Sigma_{\epsilon}(0)+\epsilon\int_{0}^{t+\theta}\big(\boldsymbol{\Phi}^{-1}_{C^{\star}}(s)\Gamma^{\star}(s)\big)\big(\boldsymbol{\Phi}^{-1}_{C^{\star}}(s)\Gamma^{\star}(s)\big)^{\top}ds\bigg]\boldsymbol{\Phi}_{C^{\star}}^{\top}(t+\theta)\notag\\
		=&\boldsymbol{\Phi}_{C^{\star}}(t)\bigg[\boldsymbol{\Phi}_{C^{\star}}(\theta)\Sigma_{\epsilon}(0)\boldsymbol{\Phi}_{C^{\star}}^{\top}(\theta)+\epsilon\int_{0}^{t+\theta}\big(\boldsymbol{\Phi}_{C^{\star}}(\theta)\boldsymbol{\Phi}^{-1}_{C^{\star}}(s)\Gamma^{\star}(s)\big)\big(\boldsymbol{\Phi}_{C^{\star}}(\theta)\boldsymbol{\Phi}^{-1}_{C^{\star}}(s)\Gamma^{\star}(s)\big)^{\top}ds\bigg]\boldsymbol{\Phi}_{C^{\star}}^{\top}(t)\notag\\
		=&\boldsymbol{\Phi}_{C^{\star}}(t)\bigg[\Sigma^c_{\epsilon}(0)-\epsilon\int_{0}^{\theta}\big(\boldsymbol{\Phi}_{C^{\star}}(\theta)\boldsymbol{\Phi}^{-1}_{C^{\star}}(t)\Gamma^{\star}(t)\big)\big(\boldsymbol{\Phi}_{C^{\star}}(\theta)\boldsymbol{\Phi}^{-1}_{C^{\star}}(t)\Gamma^{\star}(t)\big)^{\top}dt\notag\\
		&+\epsilon\int_{0}^{t+\theta}\big(\boldsymbol{\Phi}_{C^{\star}}(\theta)\boldsymbol{\Phi}^{-1}_{C^{\star}}(s)\Gamma^{\star}(s)\big)\big(\boldsymbol{\Phi}_{C^{\star}}(\theta)\boldsymbol{\Phi}^{-1}_{C^{\star}}(s)\Gamma^{\star}(s)\big)^{\top}ds\bigg]\boldsymbol{\Phi}_{C^{\star}}^{\top}(t)\notag\\
		=&\boldsymbol{\Phi}_{C^{\star}}(t)\bigg[\Sigma^c_{\epsilon}(0)+\epsilon\boldsymbol{\Phi}_{C^{\star}}(\theta)\bigg(\int_{\theta}^{t+\theta}\big(\boldsymbol{\Phi}^{-1}_{C^{\star}}(s)\Gamma^{\star}(s)\big)\big(\boldsymbol{\Phi}^{-1}_{C^{\star}}(s)\Gamma^{\star}(s)\big)^{\top}ds\bigg)\boldsymbol{\Phi}_{C^{\star}}^{\top}(\theta)\bigg]\boldsymbol{\Phi}_{C^{\star}}^{\top}(t)\notag\\
		=&\boldsymbol{\Phi}_{C^{\star}}(t)\bigg[\Sigma^c_{\epsilon}(0)+\epsilon\boldsymbol{\Phi}_{C^{\star}}(\theta)\bigg(\int_{0}^{t}\big(\boldsymbol{\Phi}^{-1}_{C^{\star}}(s+\theta)\Gamma^{\star}(s)\big)\big(\boldsymbol{\Phi}^{-1}_{C^{\star}}(s+\theta)\Gamma^{\star}(s)\big)^{\top}ds\bigg)\boldsymbol{\Phi}_{C^{\star}}^{\top}(\theta)\bigg]\boldsymbol{\Phi}_{C^{\star}}^{\top}(t)\notag\\
		=&\Sigma^c_{\epsilon}(t),\ \ \ \forall\ t\ge 0.
		\end{align}
		In another respect, under Assumption \ref{2;1}(c), it follows from Lemma \ref{2,1} that the solution of Eq. (\ref{3.9}) exists and is unique.

		To summarize, we determine that if $\Sigma^c_{\epsilon}(0)$ satisfies Eq. (\ref{3.9}), then the assertion (\ref{3.8}) holds, and system (\ref{3.3}) has a unique SPS, which admits the distribution $\mathbb{N}_{n}(\mathbf{0},\Sigma^c_{\epsilon}(t))$. According to the relationship between (\ref{3.3}) around the solution $\mathbf{X}^{\star}(t)$ and (\ref{1.2}), we conclude that $\mathbf{X}_{\epsilon}(t)-\mathbf{X}^{\star}(t)$ around $\mathbf{0}$ can be approximated by $\mathbf{Z}_{\epsilon}(t)$, i.e., system (\ref{1.2}) approximately admits a local periodic solution with the state distribution $\mathbb{N}_{n}(\mathbf{X}^{\star}(t),\Sigma^c_{\epsilon}(t))$, where
		$$\Sigma^c_{\epsilon}(t)=\boldsymbol{\Phi}_{C^{\star}}(t)\Big[\Sigma^c_{\epsilon}(0)+\epsilon\int_{0}^{t}\big(\boldsymbol{\Phi}^{-1}_{C^{\star}}(s)\Gamma^{\star}(s)\big)\big(\boldsymbol{\Phi}^{-1}_{C^{\star}}(s)\Gamma^{\star}(s)\big)^{\top}ds\Big]\boldsymbol{\Phi}_{C^{\star}}^{\top}(t).$$
		Hence, the first part of Theorem \ref{3-1} is verified.

		In what follows, we aim at obtaining the specific form and positive definiteness of $\Sigma^c_{\epsilon}(0)$. By virtue of (\ref{3.1}) and the definition of $A_{[1]}$, Eq. (\ref{3.9}) is equivalent to  
		\begin{equation}\label{3.11}
		\Im_d\Bigl(\frac{1}{\epsilon}\mathcal{G}_0\Sigma^c_{\epsilon}(0)\mathcal{G}_0^{\top},A_{[1]},\sum_{k=1}^{\xi}\lambda_k^{+}\amalg_{n,\phi_k}\Bigr)=\mathbb{O}.
		\end{equation}
		Let $\Sigma_{[1]\phi_k}\ (k\in\mathbb{S}_{\xi}^0)$ be the solutions of the following auxiliary equations, respectively:
		\begin{equation}\label{3.12}
		\Im_d(\Sigma_{[1]\phi_k},A_{[1]},\amalg_{n,\phi_k})=\mathbb{O},
		\end{equation}
		Using (\ref{3.11}), (\ref{3.12}) and the superposition principle, then $$\mathcal{G}_0\Sigma^c_{\epsilon}(0)\mathcal{G}_0^{\top}=\epsilon\sum_{k=1}^{\xi}\lambda_k^{+}\Sigma_{[1]\phi_k}.$$
		The desired result (\ref{3.4}) follows from the orthogonality of $\mathcal{G}_0$.

		We prove (\ref{3.5}) by the following two steps. First, we use the Gaussian-like elimination method (see \cite[Proof of Theorem 3.1]{S39}) to solve Eq. (\ref{3.12}) (i.e., the expression of $\Sigma_{[1]\phi_k}$ in Algorithm \ref{1}). Second, we combine the basic properties of $\Xi_{\phi_k,\eta_k}$ and an important property of $\mathbf{H}_{[1]\phi_k,i}$ to study the lower bound of $\mathbf{X}^{\top}\Sigma^c_{\epsilon}(0)\mathbf{X}$.

		\textbf{Step 1.} By proceeding the procedures 1 and 2 in Algorithm \ref{1}, for any $k\in\mathbb{S}_{\xi}^0$, let $\overline{A}_{[1]\phi_k,1}=J_{[1]\phi_k}A_{[1]}J_{[1]\phi_k}^{-1}$. In view of $(Q_{[1]\phi_k,0}P_{[1]\phi_k,0}J_{[1]\phi_k})\amalg_{n,\phi_k}(Q_{[1]\phi_k,0}P_{[1]\phi_k,0}J_{[1]\phi_k})^{\top}=\amalg_{n,1}$, then (\ref{3.12}) is equivalent to
		\begin{equation}\label{3.13}
		\Im_d\bigl((Q_{[1]\phi_k,0}P_{[1]\phi_k,0}J_{[1]\phi_k})\Sigma_{[1]\phi_k}(Q_{[1]\phi_k,0}P_{[1]\phi_k,0}J_{[1]\phi_k})^{\top},\overline{A}_{[1]\phi_k,1},\amalg_{n,1}\bigr)=\mathbb{O}.
		\end{equation}  
		Combining the Gaussian-like elimination method and the procedures 3-13 of Algorithm \ref{1} yields that $\eta_k\ge 1$, and
		\begin{equation}\label{3.14}
		\text{(i-1)}\ \sum_{j=i+1}^{n}(\overline{a}_{ji}^{\lfloor[1]\phi_k,i\rfloor})^2\neq 0,\ \ \forall\ i\in\mathbb{S}_{\eta_k-1}^0,\ \ \ \ \text{(i-2)}\ \overline{a}_{j,\eta_k}^{\lfloor[1]\phi_k,\eta_k\rfloor}=0,\ \ \forall\ j\in\mathbb{S}_{n}^{\eta_k+1},
		\end{equation}
		where each $\overline{a}_{ji}^{\lfloor [1]\phi_k,i\rfloor}$ is determined by the following iterative scheme:
		\begin{equation}\label{3.15}
		\begin{cases}
		\widehat{A}_{[1]\phi_k,i}:=P_{[1]\phi_k,i}\overline{A}_{[1]\phi_k,i}P_{[1]\phi_k,i}^{-1},\\ \overline{A}_{[1]\phi_k,i+1}:=Q_{[1]\phi_k,i}\widehat{A}_{[1]\phi_k,i}Q_{[1]\phi_k,i}^{-1},\ \ \ \forall\ i\in\mathbb{S}_{\eta_k}^0.
		\end{cases}
		\end{equation}
		Using (\ref{3.14}), (\ref{3.15}), and the forms of $P_{[1]\phi_k,i}$ and $Q_{[1]\phi_k,i}$, we determine
		 \begin{equation}\label{3.16}
		 \text{(i-3)}\ \overline{a}_{i+1,i}^{\lfloor[1]\phi_k,\eta_k\rfloor}=\overline{a}_{\nu_{k(i)},i}^{\lfloor[1]\phi_k,i\rfloor}(\neq 0),\ \ \text{and}\ \ \overline{a}_{j,i}^{\lfloor[1]\phi_k,\eta_k\rfloor}=0,\ \ \forall\ i\in\mathbb{S}_{\eta_k-1}^{-1};j\in\mathbb{S}_n^i.
		 \end{equation}
		 By Definition \ref{2:3}, we have $\overline{A}_{[1]\phi_k,\eta_k}^{(\eta_k)}\in\mathcal{U}_{cm}(\eta_k)$. As in Algorithm \ref{1}, we let
		 \begin{equation*}
		 M_{[1]\phi_k,\eta_k}=\left(\begin{array}{cc}
		 \mathcal{M}_{[1]\eta_k} & \mathbb{O} \\ 
		 \mathbb{O} & \mathbf{I}_{n-\eta_k}
		 \end{array}\right),\ \ A_{[1]s,\phi_k}=M_{[1]\phi_k,\eta_k}\overline{A}_{[1]\phi_k,\eta_k}M_{[1]\phi_k,\eta_k}^{-1}.
		 \end{equation*}
		 This together with the form of $\mathcal{M}_{[1]\eta_k}$ and Proposition \ref{2&2} implies
		 \begin{equation}\label{3.17}
		 A_{[1]s,\phi_k}^{(\eta_k)}=\mathcal{M}_{[1]\eta_k}\overline{A}_{[1]\phi_k,\eta_k}^{(\eta_k)}\mathcal{M}_{[1]\eta_k}^{-1}\in\mathscr{T}(\eta_k).
		 \end{equation}
		 A similar argument in (\ref{a.9})-(\ref{a.11}) coupled with (\ref{3.16}) leads to
		 \begin{align}\label{3.18}
		 M_{[1]\phi_k,\eta_k}\amalg_{n,1}M_{[1]\phi_k,\eta_k}^{\top}=\Big(\bigl(\boldsymbol{\beta}_{\eta_k}(\overline{A}_{[1]\phi_k,\eta_k}^{(\eta_k)})^{\eta_k-1}\bigr)^{(1)}\Big)^2\amalg_{n,1}=\Bigl(\prod_{i=0}^{\eta_k-1}\overline{a}_{\nu_{k(i)},i}^{\lfloor[1]\phi_k,i\rfloor}\Bigr)^2\amalg_{n,1}.
		 \end{align}
		 Denote $$\widetilde{\Sigma}_{[1]\phi_k}=\Bigl[\Bigl(\prod_{i=0}^{\eta_k-1}Q_{[1]\phi_k,i}P_{[1]\phi_k,i}\Bigr)J_{[1]\phi_k}\Bigr]\Sigma_{[1]\phi_k}\Bigl[\Bigl(\prod_{i=0}^{\eta_k-1}Q_{[1]\phi_k,i}P_{[1]\phi_k,i}\Bigr)J_{[1]\phi_k}\Bigr]^{\top}.$$
		 Then by (\ref{3.18}), Eq. (\ref{3.13}) is equivalent to
		 \begin{equation}\label{3.19}
		 \Im_d\biggl(M_{[1]\phi_k,\eta_k}\widetilde{\Sigma}_{[1]\phi_k}M_{[1]\phi_k,\eta_k}^{\top},A_{[1]s,\phi_k},\Bigl(\prod_{i=0}^{\eta_k-1}\overline{a}_{\nu_{k(i)},i}^{\lfloor[1]\phi_k,i\rfloor}\Bigr)^2\amalg_{n,1}\biggr)=\mathbb{O}.
		 \end{equation}
		 In the display above, we have used the fact
		 \begin{equation*}
		 \Bigl[\Bigl(\prod_{i=0}^{\eta_k-1}Q_{[1]\phi_k,i}P_{[1]\phi_k,i}\Bigr)J_{[1]\phi_k}\Bigr]\amalg_{n,1}\Bigl[\Bigl(\prod_{i=0}^{\eta_k-1}Q_{[1]\phi_k,i}P_{[1]\phi_k,i}\Bigr)J_{[1]\phi_k}\Bigr]^{\top}=\amalg_{n,1}.
		 \end{equation*}
		 To proceed, by (\ref{3.17}) and Algorithm \ref{1}, let $\Xi_{[1]\phi_k,\eta_k}$ be the following $\eta_k$-dimensional standard $\mathbb{L}_0$-algebraic equation
		 \begin{equation}\label{3.20}
		 \Im_d\bigl(\Xi_{[1]\phi_k,\eta_k},A_{[1]s,\phi_k}^{(\eta_k)},\amalg_{\eta_k,1}\bigr)=\mathbb{O}.
		 \end{equation} 
		 Using Proposition \ref{2&1}, we have $\Xi_{[1]\phi_k,\eta_k}\succ\mathbb{O}$ and
		 \begin{equation}\label{3.21}
		 \Xi_{[1]\phi_k,\eta_k}=\left(\begin{array}{cccccc}
		 \zeta_1 & \zeta_2 & \zeta_3 & \zeta_4 & \cdots & \zeta_{\eta_k} \\ 
		 \zeta_2 & \zeta_1 & \zeta_2 & \zeta_3 & \cdots & \zeta_{\eta_k-1} \\ 
		 \zeta_3 & \zeta_2 & \zeta_1 & \zeta_2 & \cdots & \zeta_{\eta_k-2} \\ 
		 \zeta_4 & \zeta_3 & \zeta_2 & \zeta_1 & \cdots & \zeta_{\eta_k-2} \\ 
		 \vdots & \vdots & \vdots & \vdots & \begin{sideways}$\ddots$\end{sideways} & \vdots \\ 
		 \zeta_{\eta_k} & \zeta_{\eta_k-1} & \zeta_{\eta_k-2} & \zeta_{\eta_k-3} & \cdots & \zeta_1
		 \end{array}\right),
		 \end{equation}
		 where $(\zeta_1,\zeta_2,...,\zeta_{\eta_k})^{\top}=\mathscr{G}_{c,A_{[1]s,\phi_k}^{(\eta_k)}}^{-1}\mathbf{e}_{l}$.
		 \\
		 Below we aim to study the relationship between $\widetilde{\Sigma}_{[1]\phi_k}$ and $\Xi_{[1]\phi_k,\eta_k}$. To begin, we consider the following two conditions:
		 \begin{equation*}
		 (\mathscr{A}_1)\ \eta_k=n,\ \ \ (\mathscr{A}_2)\ \eta_k\in\mathbb{S}_{n-1}^0.
		 \end{equation*}
		 Case 1. If $(\mathscr{A}_1)$ is satisfied, then $M_{[1]\phi_k,n}=\mathcal{M}_n$ and $A_{[1]s,\phi_k}\in\mathscr{T}(n)$. By virtue of Eq. (\ref{3.19}) and Lemma \ref{2,1}, we determine
		 \begin{equation*}
		 \Bigl(\prod_{i=0}^{n-1}\overline{a}_{\nu_{k(i)},i}^{\lfloor[1]\phi_k,i\rfloor}\Bigr)^{-2}M_{[1]\phi_k,\eta_k}\widetilde{\Sigma}_{[1]\phi_k}M_{[1]\phi_k,\eta_k}^{\top}=\Xi_{[1]\phi_k,n},
		 \end{equation*}
		 which implies
		 \begin{equation}\label{3.22}
		 \begin{split}
		 \Sigma_{[1]\phi_k}=&\Bigl(\prod_{i=0}^{n-1}\overline{a}_{\nu_{k(i)},i}^{\lfloor[1]\phi_k,i\rfloor}\Bigr)^2\Bigl(M_{[1]\phi_k,n}\Bigl(\prod_{i=0}^{n-1}Q_{[1]\phi_k,i}P_{[1]\phi_k,i}\Bigr)J_{[1]\phi_k}\Bigr)^{-1}\\
		 &\times\Xi_{\phi_k,n} \Bigl[\Bigl(M_{[1]\phi_k,n}\Bigl(\prod_{i=0}^{n-1}Q_{[1]\phi_k,i}P_{[1]\phi_k,i}\Bigr)J_{[1]\phi_k}\Bigr)^{-1}\Bigr]^{\top}.
		 \end{split}
		 \end{equation}
		 Case 2. If $(\mathscr{A}_2)$ is satisfied, then $\overline{A}_{[1]\phi_k,\eta_k}$ is of the following form:
		 \begin{equation}\label{3.23}
		 \overline{A}_{[1]\phi_k,\eta_k}=\left(\begin{array}{cc}
		 \overline{A}_{[1]\phi_k,\eta_k}^{(\eta_k)} & \underline{A}_{1,\eta_k} \\ 
		 \mathbb{O} & \underline{A}_{2,\eta_k}
		 \end{array}\right),
		 \end{equation}
		 where $\underline{A}_{2,\eta_k}\in\mathbb{R}^{(n-\eta_k)\times(n-\eta_k)}$. Without loss of generality, we let
		 \begin{equation}\label{3.24}
		 \widetilde{\Sigma}_{[1]\phi_k}:=\left(\begin{array}{cc}
		 \widetilde{\Sigma}_{[1]\phi_k}^{(\eta_k)} & \pounds_1 \\ 
		 \pounds_1^{\top} & \pounds_2
		 \end{array}\right),
		 \end{equation}
		 where $\pounds_2\in\mathbb{R}^{(n-\eta_k)\times(n-\eta_k)}$ is real symmetric.
		 \\
		 Inserting (\ref{3.23}) and (\ref{3.24}) into Eq. (\ref{3.19}) yields
		 \begin{equation}\label{3.25}
		 \begin{cases}
		 \Im_d\biggl(\mathcal{M}_{[1]\eta_k}\widetilde{\Sigma}_{[1]\phi_k}^{(\eta_k)}\mathcal{M}_{[1]\eta_k}^{\top},A_{[1]s,\phi_k}^{(\eta_k)},\Bigl(\prod\limits_{i=0}^{\eta_k-1}\overline{a}_{\nu_{k(i)},i}^{\lfloor[1]\phi_k,i\rfloor}\Bigr)^2\amalg_{\eta_k,1}+\mathcal{M}_{[1]\eta_k}\underline{A}_{1,\eta_k}\big(A_{[1]s,\phi_k}^{(\eta_k)}\mathcal{M}_{[1]\eta_k}\pounds_1\big)^{\top}\\
		 \ \ \ \ \ \ \ \ \ \ \ \ +A_{[1]s,\phi_k}^{(\eta_k)}\mathcal{M}_{[1]\eta_k}\pounds_1(\mathcal{M}_{[1]\eta_k}\underline{A}_{1,\eta_k})^{\top}+\mathcal{M}_{[1]\eta_k}\underline{A}_{1,\eta_k}\pounds_2(\mathcal{M}_{[1]\eta_k}\underline{A}_{1,\eta_k})^{\top}\biggr)=\mathbb{O},\\
		 \mathcal{M}_{[1]\eta_k}\pounds_1-\big(A_{[1]s,\phi_k}^{(\eta_k)}\mathcal{M}_{[1]\eta_k}\pounds_1+\mathcal{M}_{[1]\eta_k}\underline{A}_{1,\eta_k}\pounds_2\big)\underline{A}_{2,\eta_k}^{\top}=\mathbb{O},\\
		 \Im_d\bigl(\pounds_2,\underline{A}_{2,\eta_k},\mathbb{O}\bigr)=\mathbb{O}.
		 \end{cases}
		 \end{equation}
		 Using (\ref{3.23}) and Assumption \ref{2;1}(c), one has $\overline{A}_{[1]\phi_k,\eta_k}^{(\eta_k)}\in\overline{\mathbf{CM}}(\eta_k)$ and $\underline{A}_{2,\eta}\in\overline{\mathbf{CM}}(n-\eta_k)$. Combined with (\ref{a.3}) and the third equation of (\ref{3.25}), we obtain
		 $$\pounds_2=\sum_{k=0}^{\infty}\underline{A}_{2,\eta}^k\mathbb{O}(\underline{A}_{2,\eta}^k)^{\top}=\mathbb{O}.$$
		 Hence, (\ref{3.25}) is simplified as
		 \begin{equation}\label{3.26}
		 \begin{cases}
		 \Im_d\biggl(\mathcal{M}_{[1]\eta_k}\widetilde{\Sigma}_{[1]\phi_k}^{(\eta_k)}\mathcal{M}_{[1]\eta_k}^{\top},A_{[1]s,\phi_k}^{(\eta_k)},\Bigl(\prod\limits_{i=0}^{\eta_k-1}\overline{a}_{\nu_{k(i)},i}^{\lfloor[1]\phi_k,i\rfloor}\Bigr)^2\amalg_{\eta_k,1}\\
		 \ \ \ \ \ \ \ \ \ \ \ \ +\mathcal{M}_{[1]\eta_k}\underline{A}_{1,\eta_k}\big(A_{[1]s,\phi_k}^{(\eta_k)}\mathcal{M}_{[1]\eta_k}\pounds_1\big)^{\top}+A_{[1]s,\phi_k}^{(\eta_k)}\mathcal{M}_{[1]\eta_k}\pounds_1(\mathcal{M}_{[1]\eta_k}\underline{A}_{1,\eta_k})^{\top}\biggr)=\mathbb{O},\\
		 \mathcal{M}_{[1]\eta_k}\pounds_1-A_{[1]s,\phi_k}^{(\eta_k)}\mathcal{M}_{[1]\eta_k}\pounds_1\underline{A}_{2,\eta_k}^{\top}=\mathbb{O}.
		 \end{cases}
		 \end{equation}
		 By Lemma \ref{2,1} and (\ref{3.12}), then $\widetilde{\Sigma}_{[1]\phi_k}$ is unique, which implies that the solution $(\widetilde{\Sigma}_{[1]\phi_k}^{(\eta_k)},\pounds_1)$ of Eq. (\ref{3.26}) is also unique.

		 Let $\digamma=A_{[1]s,\phi_k}^{(\eta_k)}\mathcal{M}_{[1]\eta_k}\pounds_1$, given the second equation of (\ref{3.26}), we have
		 \begin{equation*}
		 \digamma-A_{[1]s,\phi_k}^{(\eta_k)}\digamma\underline{A}_{2,\eta_k}^{\top}=\mathbb{O}.
		 \end{equation*}
		 Next we verify $\digamma=\mathcal{M}_{[1]\eta_k}\pounds_1$ by using a contradiction argument. Suppose that $\digamma\neq\mathcal{M}_{[1]\eta_k}\pounds_1$, we can construct the following auxiliary Lyapunov equation:
		 \begin{equation*}
		 \begin{split}
		 \Im_d\biggl(\mathcal{M}_{[1]\eta_k}&\widehat{\Sigma}_{\phi_k}^{[\digamma]}\mathcal{M}_{[1]\eta_k}^{\top},A_{[1]s,\phi_k}^{(\eta_k)},\Bigl(\prod\limits_{i=0}^{\eta_k-1}\overline{a}_{\nu_{k(i)},i}^{\lfloor[1]\phi_k,i\rfloor}\Bigr)^2\amalg_{\eta_k,1}\\
		 &+\mathcal{M}_{[1]\eta_k}\underline{A}_{1,\eta_k}\big(A_{[1]s,\phi_k}^{(\eta_k)}\digamma\big)^{\top}+A_{[1]s,\phi_k}^{(\eta_k)}\digamma(\mathcal{M}_{[1]\eta_k}\underline{A}_{1,\eta_k})^{\top}\biggr)=\mathbb{O}.
		 \end{split}
		 \end{equation*}  
		 By (\ref{3.17}) and Lemma \ref{2,1}, $\widehat{\Sigma}_{\phi_k}^{[\digamma]}$ exists and is unique. By a similar argument in (\ref{a.9})-(\ref{a.11}), one gets
		 \begin{equation*}
		 |\mathcal{M}_{[1]\eta_k}|=\prod_{i=1}^{\eta_k-1}\bigl(\boldsymbol{\beta}_{\eta_k}(\overline{A}_{[1]\phi_k,\eta_k}^{(\eta_k)})^{\eta_k-i}\bigr)^{(i)}=\prod_{i=0}^{\eta_k-1}\big(\overline{a}_{\nu_{k(i)},i}^{\lfloor[1]\phi_k,i\rfloor}\big)^i\neq 0.
		 \end{equation*}
		 This leads to a contradiction that there are two different solutions $(\widehat{\Sigma}_{\phi_k}^{[\digamma]},\mathcal{M}_{[1]\eta_k}^{-1}\digamma)$ and $(\widetilde{\Sigma}_{[1]\phi_k}^{(\eta_k)},\pounds_1)$ satisfying Eq. (\ref{3.26}). Thus, $\pounds_1=\mathcal{M}_{[1]\eta_k}^{-1}\digamma$, which yields
		 \begin{equation}\label{3.27}
		 \big(A_{[1]s,\phi_k}^{(\eta_k)}-\mathbf{I}_{\eta_k}\big)\digamma=\mathbb{O}.
		 \end{equation}
		 Denote by $\{\lambda_i\}_{1\le i\le\eta_k}$ the eigenvalues of $A_{[1]s,\phi_k}^{(\eta_k)}$, we find
		 $$\bigl|A_{[1]s,\phi_k}^{(\eta_k)}-\mathbf{I}_{\eta_k}\bigr|=(-1)^{\eta_k}\psi_{A_{[1]s,\phi_k}^{(\eta_k)}}(1)=(-1)^{\eta_k}\sum_{i=1}^{\eta_k}(1-\lambda_i)\neq 0.$$
		 Combining (\ref{3.27}), we have $\digamma=\mathbb{O}$ and $\pounds_1=\mathbb{O}$. Then Eq. (\ref{3.19}) is equivalent to 
		 \begin{equation}\label{3.28}
		 \Im_d\biggl(\Bigl(\prod\limits_{i=0}^{\eta_k-1}\overline{a}_{\nu_{k(i)},i}^{\lfloor[1]\phi_k,i\rfloor}\Bigr)^{-2}\mathcal{M}_{[1]\eta_k}\widetilde{\Sigma}_{[1]\phi_k}^{(\eta_k)}\mathcal{M}_{[1]\eta_k}^{\top},A_{[1]s,\phi_k}^{(\eta_k)},\amalg_{\eta_k,1}\biggr)=\mathbb{O},
		 \end{equation}
		 with
		 $$\widetilde{\Sigma}_{[1]\phi_k}=\left(\begin{array}{cc}
		 \widetilde{\Sigma}_{[1]\phi_k}^{(\eta_k)} & \mathbb{O} \\ 
		 \mathbb{O} & \mathbb{O}
		 \end{array}\right).$$
		 Applying (\ref{3.20}) and Proposition \ref{2&1} to Eq. (\ref{3.28}) leads to $$\Bigl(\prod_{i=0}^{\eta_k-1}\overline{a}_{\nu_{k(i)},i}^{\lfloor[1]\phi_k,i\rfloor}\Bigr)^{-2}\mathcal{M}_{[1]\eta_k}\widetilde{\Sigma}_{[1]\phi_k}^{(\eta_k)}\mathcal{M}_{[1]\eta_k}^{\top}=\Xi_{[1]\phi_k,\eta_k}\succ\mathbb{O}.$$ 
		 Then
		 \begin{align}\label{3.29}
		 \Sigma_{[1]\phi_k}=&\Big(M_{[1]\phi_k,\eta_k}\Bigl(\prod_{i=0}^{\eta_k-1}Q_{[1]\phi_k,i}P_{[1]\phi_k,i}\Bigr)J_{[1]\phi_k}\Big)^{-1}\left(\begin{array}{cc}
		 \mathcal{M}_{[1]\eta_k}\widetilde{\Sigma}_{[1]\phi_k}^{(\eta_k)}\mathcal{M}_{[1]\eta_k}^{\top} & \mathbb{O} \\ 
		 \mathbb{O} & \mathbb{O}
		 \end{array}\right)\notag\\
		 &\times\Big[\Big(M_{[1]\phi_k,\eta_k}\Bigl(\prod_{i=0}^{\eta_k-1}Q_{[1]\phi_k,i}P_{[1]\phi_k,i}\Bigr)J_{[1]\phi_k}\Big)^{-1}\Big]^{\top}\notag\\
		 =&\Bigl(\prod_{i=0}^{\eta_k-1}\overline{a}_{\nu_{k(i)},i}^{\lfloor[1]\phi_k,i\rfloor}\Bigr)^2\Big(M_{[1]\phi_k,\eta_k}\Bigl(\prod_{i=0}^{\eta_k-1}Q_{[1]\phi_k,i}P_{[1]\phi_k,i}\Bigr)J_{[1]\phi_k}\Big)^{-1}\notag\\
		 &\times\Delta_{[1]\phi_k,\eta_k} \Big[\Big(M_{[1]\phi_k,\eta_k}\Bigl(\prod_{i=0}^{\eta_k-1}Q_{[1]\phi_k,i}P_{[1]\phi_k,i}\Bigr)J_{[1]\phi_k}\Big)^{-1}\Big]^{\top},
		 \end{align}   
		 where $\Delta_{[1]\phi_k,\eta_k}$ is the same as in Algorithm \ref{1}.

		 By (\ref{3.22}), (\ref{3.29}) and $\Delta_{[1]\phi_k,n}=\Xi_{[1]\phi_k,n}$, the explicit form of $\Sigma_{[1]\phi_k}$ in Algorithm \ref{1} is verified. An application of (\ref{3.21}) and Algorithm \ref{1} for $(\ref{3.4})$ implies that 
		 $$\text{(ii-1)}\ \Sigma_{[1]\phi_k}\succeq\mathbb{O},\ \ \forall\ k\in\mathbb{S}_{\xi}^0,\ \ \ \text{(ii-2)}\ \Sigma^c_{\epsilon}(0)\succeq\mathbb{O}.$$
		 
	    \textbf{Step 2.} In view of $J_{[1]\phi_k}^{-1}=J_{[1]\phi_k}^{\top}$ and $P_{[1]\phi_k,i}^{-1}=P_{[1]\phi_k,i}^{\top}$, we have
		\begin{equation*}
		\mathbf{H}_{[1]\phi_k,\eta_k}=\biggl(\Bigl(\Bigl(\prod_{i=0}^{\eta_k-1}Q_{[1]\phi_k,i}P_{[1]\phi_k,i}\Bigr)J_{[1]\phi_k}\Bigr)^{-1}\biggr)^{\top}\mathbf{Z}.
		\end{equation*}
	    Combining (\ref{3.4}), (\ref{3.22}) and (\ref{3.29}) leads to
		\begin{align}\label{3.30}
		\mathbf{X}^{\top}\Sigma^c_{\epsilon}(0)\mathbf{X}=&\epsilon\mathbf{Z}^{\top}\Bigl(\sum_{k=1}^{\xi}\lambda_k^{+}\Sigma_{[1]\phi_k}\Bigr)\mathbf{Z}\notag\\
		\ge&\epsilon\min_{k\in\mathbb{S}_{\xi}^0}\bigg\{\lambda_k^{+}\Bigl(\prod_{i=0}^{\eta_k-1}\overline{a}_{\nu_{k(i)},i}^{\lfloor[1]\phi_k,i\rfloor}\Bigr)^2\bigg\}\Biggl\{\sum_{k=1}^{\xi}\mathbf{Z}^{\top}\biggl(M_{[1]\phi_k,\eta_k}\Bigl(\prod_{i=0}^{\eta_k-1}Q_{[1]\phi_k,i}P_{[1]\phi_k,i}\Bigr)J_{[1]\phi_k}\biggr)^{-1}\notag\\
		&\times\Delta_{[1]\phi_k,\eta_k} \biggl[\biggl(M_{[1]\phi_k,\eta_k}\Bigl(\prod_{i=0}^{\eta_k-1}Q_{[1]\phi_k,i}P_{[1]\phi_k,i}\Bigr)J_{[1]\phi_k}\biggr)^{-1}\biggr]^{\top}\mathbf{Z}\Biggr\}\notag\\
		=&\epsilon\min_{k\in\mathbb{S}_{\xi}^0}\bigg\{\lambda_k^{+}\Bigl(\prod_{i=0}^{\eta_k-1}\overline{a}_{\nu_{k(i)},i}^{\lfloor[1]\phi_k,i\rfloor}\Bigr)^2\bigg\}\sum_{k=1}^{\xi}\Biggl\{\biggl[\biggl(\Bigl(\Bigl(\prod_{i=0}^{\eta_k-1}Q_{[1]\phi_k,i}P_{[1]\phi_k,i}\Bigr)J_{[1]\phi_k}\Bigr)^{-1}\biggr)^{\top}\mathbf{Z}\biggr]^{\top}\notag\\
		&\times\Bigl(M_{[1]\phi_k,\eta_k}^{-1}\Delta_{[1]\phi_k,\eta_k}(M_{[1]\phi_k,\eta_k}^{-1})^{\top}\Bigr)\biggl[\biggl(\Bigl(\Bigl(\prod_{i=0}^{\eta_k-1}Q_{[1]\phi_k,i}P_{[1]\phi_k,i}\Bigr)J_{[1]\phi_k}\Bigr)^{-1}\biggr)^{\top}\mathbf{Z}\biggr]\Biggr\}\notag\\
		=&\epsilon\min_{k\in\mathbb{S}_{\xi}^0}\bigg\{\lambda_k^{+}\Bigl(\prod_{i=0}^{\eta_k-1}\overline{a}_{\nu_{k(i)},i}^{\lfloor[1]\phi_k,i\rfloor}\Bigr)^2\bigg\}\sum_{k=1}^{\xi}\mathbf{H}_{[1]\phi_k,\eta_k}^{\top}\left(\begin{array}{cc}
		\mathcal{M}_{[1]\eta_k}^{-1}\Xi_{[1]\phi_k,\eta_k}(\mathcal{M}_{[1]\eta_k}^{-1})^{\top} & \mathbb{O} \\ 
		\mathbb{O} & \mathbb{O} 
		\end{array}\right)\mathbf{H}_{[1]\phi_k,\eta_k}.
		\end{align}
		Denote by $\widehat{\lambda}_k$ the minimal eigenvalue of $\mathcal{M}_{[1]\eta_k}^{-1}\Xi_{[1]\phi_k,\eta_k}(\mathcal{M}_{[1]\eta_k}^{-1})^{\top}$, we can obtain $\widehat{\lambda}_k>0$ and
		\begin{equation}\label{3.31}
		M_{[1]\phi_k,\eta_k}^{-1}\Xi_{[1]\phi_k,\eta_k}(M_{[1]\phi_k,\eta_k}^{-1})^{\top}\succeq\widehat{\lambda}_k\mathbf{I}_{\eta_k}.
		\end{equation}
		Using the forms of $J_{[1]\phi_k}$, $P_{[1]\phi_k,i}$ and $Q_{[1]\phi_k,i}$, for any $\eta_k\ge 2$,
		\begin{equation}\label{3.32}
		\mathbf{H}_{[1]\phi_k,l}^{\langle j \rangle}=\mathbf{H}_{[1]\phi_k,j}^{\langle j\rangle},\ \ \ \forall\ j\in\mathbb{S}_{\eta_k}^0;l\in\mathbb{S}_{\eta_k}^j,
		\end{equation}
		which can be proved by a slight modification in \cite[Proof of (3.29)]{S39}.
		\\
		Define
		\begin{equation}\label{3.33}
		\rho_{\epsilon}=\epsilon\min_{k\in\mathbb{S}_{\xi}^0}\Bigl\{\widehat{\lambda}_k\lambda_k^{+}\Bigl(\prod_{i=0}^{\eta_k-1}\overline{a}_{\nu_{k(i)},i}^{\lfloor[1]\phi_k,i\rfloor}\Bigr)^2\Bigr\}>0.
		\end{equation}
		Inserting (\ref{3.31})-(\ref{3.33}) into (\ref{3.30}) yields
		\begin{align}\label{3.34}
		\mathbf{X}^{\top}\Sigma^c_{\epsilon}(0)\mathbf{X}\ge&\epsilon\min_{k\in\mathbb{S}_{\xi}^0}\bigg\{\lambda_k^{+}\Bigl(\prod_{i=0}^{\eta_k-1}\overline{a}_{\nu_{k(i)},i}^{\lfloor[1]\phi_k,i\rfloor}\Bigr)^2\bigg\}\sum_{k=1}^{\xi}\widehat{\lambda}_k(\mathbf{H}_{[1]\phi_k,\eta_k}^{\langle\eta_k\rangle})^{\top}\mathbf{H}_{[1]\phi_k,\eta_k}^{\langle\eta_k\rangle}\notag\\
		\ge&\rho_{\epsilon}\sum_{k=1}^{\xi}\sum_{j=1}^{\eta_k}(\mathbf{H}_{[1]\phi_k,\eta_k}^{(j)})^2\notag\\
		=&\rho_{\epsilon}\sum_{k=1}^{\xi}\Bigl(Z_{\phi_k}^2+\sum_{j=2}^{\eta_k}(\mathbf{H}_{[1]\phi_k,j}^{(j)})^2\Bigr).
		\end{align} 
		In the display above, we have used
		\begin{equation*}
		\mathbf{H}_{[1]\phi_k,1}^{(1)}=Z_{\phi_k},\ \ \ \forall\ k\in\mathbb{S}_{\xi}^0.
		\end{equation*}
		Thus, we get the desired result (\ref{3.5}). The proof is complete. 
	\end{proof}
	\begin{remark}\label{22}
  {\rm{Under Assumption \ref{2&1}, the PNOA method is employed to yield a local approximate SPS with the distribution $\mathbb{N}_n(\mathbf{X}^{\star}(t),\Sigma^c_{\epsilon}(t))$ for (\ref{1.2}), as shown in Theorem \ref{3-1}. Notably, such $\Sigma^c_{\epsilon}(t)$ is required to be positive definite when using the PNOA method. This condition ensures the approximate form for the joint marginal density of some subpopulations can be explicitly obtained for further application.

		Note that $\Sigma^c_{\epsilon}(t)$ is determined by $\Sigma^c_{\epsilon}(0)$, $\forall\ t>0$, we first focus on the special form and positive definiteness of $\Sigma^c_{\epsilon}(0)$. In fact, by (\ref{3.9}), (\ref{a.3}) and Assumption \ref{2&1}(c), $\Sigma^c_{\epsilon}(0)$ can be explicitly written as
		\begin{equation}\label{3.35}
		\Sigma^c_{\epsilon}(0)=\epsilon\sum_{i=1}^{\infty}\int_{0}^{\theta}\big(\boldsymbol{\Phi}^i_{C^{\star}}(\theta)\boldsymbol{\Phi}^{-1}_{C^{\star}}(t)\Gamma^{\star}(t)\big)\big(\boldsymbol{\Phi}^i_{C^{\star}}(\theta)\boldsymbol{\Phi}^{-1}_{C^{\star}}(t)\Gamma^{\star}(t)\big)^{\top}dt.
		\end{equation}
	    However, (\ref{3.35}) is almost unavailable in practical terms. Mainly, it is difficult to compute such matrix integral as it requires the accurate results of $\boldsymbol{\Phi}^{-1}_{C^{\star}}(t)$ and $\boldsymbol{\Phi}^i_{C^{\star}}(\theta)$ for any $(t,i)\in[0,\infty)\times\mathbb{S}_{\infty}^0$. Moreover, only $\Sigma^c_{\epsilon}(0)\succeq\mathbb{O}$ can be obtained under degenerate diffusion, but $\Sigma^c_{\epsilon}(0)\succ\mathbb{O}$ is unknown. Thus, we need to study $\Sigma^c_{\epsilon}(0)$ from a matrix equation perspective (i.e., Eq. (\ref{3.9})).

	    A common approach for solving the discrete-type Lyapunov equation $\Im_d(\Sigma,\boldsymbol{\varpi},\aleph)$ is to derive a simple canonical form of $\boldsymbol{\varpi}$ ($\mathscr{C}_f(\boldsymbol{\varpi})$ for short) by matrix transformations, thereby constructing implementable iteration schemes. Bartels--Stewart method \cite{S43,S44} is currently the most popular algorithm along this line and enjoys considerable success, the associated command ``dlyap($\cdot,\cdot$)" in MATLAB software is thus developed. The main idea of such numerical method is Schur factorization, i.e., find an orthogonal matrix $Q$ such that  $\mathscr{C}_f(\boldsymbol{\varpi})=Q^{-1}\boldsymbol{\varpi}Q\in\mathcal{U}(\cdot)$, then $\Im_c(\Sigma,\boldsymbol{\varpi},\aleph)=\mathbb{O}$ is equivalent to $\Im_c(Q^{\top}\Sigma Q,\mathcal{U}(\cdot),Q^{\top}\aleph Q)=\mathbb{O}$, which is easily calculated. Inspired by this, a few numerical methods based on different matrix factorizations (including Hessenberg--Schur transformation and Cholesky factorization) have been also developed \cite{S42,S45,S46}. Although these algorithms have large potential to solve (\ref{3.9}), the positive definiteness of $\Sigma^c_{\epsilon}(0)$ cannot be verified.
     
     Combined with the superposition principle and Gaussian-like elimination method, we propose a novel numerical framework for solving the general Lyapunov equation $\Im_d(\Sigma,\boldsymbol{\varpi},\aleph)$; see Algorithm \ref{1}. A key idea is to introduce a new canonical form $\mathscr{T}(\cdot)$ and combine similarity transformations such that $\mathscr{C}_f(\boldsymbol{\varpi}^{(m)})\in\mathscr{T}(l)$ for any $\boldsymbol{\varpi}\in\overline{\mathbf{CM}}(l)$, where $m\in\mathbb{S}_l^0$. Then $\Im_d(\Sigma,\boldsymbol{\varpi},\aleph)=\mathbb{O}$ can be equivalently transformed into an $m$-dimensional standard $\mathbb{L}_0$ algebraic equation in the sense that excluding zero matrix equation. One of the distinct advantages of Algorithm \ref{1} is to obtain the expression of $\Sigma^c_{\epsilon}(t)$ and its positive definiteness simultaneously. To be specific, using Proposition \ref{2&1} and Gaussian-like elimination method, for any $k\in\mathbb{S}_{\xi}^0$, we construct a sequence $\{\mathbf{H}_{[1]\phi_k,j}\}_{j=1}^{\eta_k}$, which can record the form and minimal rank (i.e., $\eta_k$) of all column components of $\Sigma_{[1]\phi_k}$. Then (\ref{3.4}) and the expression of $\Sigma^c_{\epsilon}(t)$ are derived. Meanwhile (\ref{3.6}), a criterion for analyzing $\Sigma^c_{\epsilon}(t)\succ\mathbb{O}$, is obtained. Specifically, there is a vector $\boldsymbol{h}_{[1]\phi_k,i}$ such that $\mathbf{H}_{[1]\phi_k,i}^{(i)}=\boldsymbol{h}_{[1]\phi_k,i}\mathbf{Z}$, $\forall\ i\in\mathbb{S}_{\eta_k}^0$. Note that $\mathbf{Z}=\mathcal{G}_0\mathbf{X}$, then $\Sigma^c_{\epsilon}(0)\succ\mathbb{O}$ is equivalent to proving that $\mathbf{X}^{\top}\Sigma^c_{\epsilon}(0)\mathbf{X}=0$ holds if and only if $\mathbf{Z}=\mathbf{0}$, which is easily verified. Furthermore, by Theorem \ref{3-1}, the desired result $\Sigma^c_{\epsilon}(t)\succ\mathbb{O}$ follows from
     \begin{equation*}
     \mathbf{X}^{\top}\Sigma^c_{\epsilon}(t)\mathbf{X}\ge\big(\boldsymbol{\Phi}^{\top}_{C^{\star}}(t)\mathbf{X}\big)^{\top}\Sigma^c_{\epsilon}(0)\big(\boldsymbol{\Phi}^{\top}_{C^{\star}}(t)\mathbf{X}\big).
     \end{equation*} 
     
	 It should be further noted that when solving Eq. (\ref{3.9}) using the aforementioned numerical algorithms, some complex iterative schemes and high computational cost are required for the canonical factorizations of $\boldsymbol{\Phi}_{C^{\star}}(\theta)$ and $\int_{0}^{\theta}\big(\boldsymbol{\Phi}_{C^{\star}}(\theta)\boldsymbol{\Phi}^{-1}_{C^{\star}}(t)\Gamma^{\star}(t)\big)\big(\boldsymbol{\Phi}_{C^{\star}}(\theta)\boldsymbol{\Phi}^{-1}_{C^{\star}}(t)\Gamma^{\star}(t)\big)^{\top}dt$. Instead, our Algorithm \ref{1} only needs to obtain some large $\eta_k\in\mathbb{S}_n^0$ satisfying $\mathscr{C}_f(A_{[1]}^{(\eta_k)})\in\mathscr{T}(\eta_k)$, $\forall\ k\in\mathbb{S}_{\xi}^0$. This is implemented by a Gaussian-like elimination method (i.e., (\ref{3.14}) and (\ref{3.15})), which is easier than matrix factorization technique. Thus, the computational structure of Algorithm \ref{1} is simpler.}}
	\end{remark}
	
	In part I of the series \cite{S39}, we provided a normal approximation (called uNA) approach to approximate the invariant measure of (\ref{1.2}) under autonomous case, yielding a complete numerical framework for solving the general continuous-type Lyapunov equation $\Im_c(\cdot,\cdot,\cdot)=\mathbb{O}$; see \cite[Algorithm 3]{S39}. Below we expect to study the connections between the PNOA and uNA approaches.

	If $f(t,\cdot)$ is independent of $t$, it follows from Assumptions \ref{2;1}(a) that the deterministic system of (\ref{1.2}) has a unique equilibrium $\mathbf{X}^*$. In this case, $C^{\star}(t)$ is a constant matrix, denoted by $C^*$ for simplicity. 
	\begin{proposition}\label{3&1}
		Under Assumption \ref{2;1}, if $f(t,\cdot)$ is time-independent, then $\Sigma_{\epsilon}^c(t)$ satisfies:
		\begin{equation}\label{3.36}
		\Im_c\Bigl(\Sigma_{\epsilon}^c(t),C^*,\epsilon\Gamma^{\star}(t)(\Gamma^{\star}(t))^{\top}-\frac{d(\Sigma_{\epsilon}^c(t))}{dt}\Bigr)=\mathbb{O}.
		\end{equation} 
	\end{proposition}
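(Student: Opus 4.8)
The plan is to differentiate the explicit formula for $\Sigma_{\epsilon}^c(t)$ from Theorem \ref{3-1} and verify the identity term by term. First I would record the simplification afforded by the time-independence of $f$: by Assumption \ref{2;1}(a) the deterministic system of (\ref{1.2}) has the unique equilibrium $\mathbf{X}^*$, so $C^{\star}(t)\equiv C^*$ is constant and $\boldsymbol{\Phi}_{C^{\star}}(t)=e^{C^*t}$. The only structural fact needed is that $C^*$ commutes with $\boldsymbol{\Phi}_{C^{\star}}(t)$, whence $\frac{d}{dt}\boldsymbol{\Phi}_{C^{\star}}(t)=C^*\boldsymbol{\Phi}_{C^{\star}}(t)=\boldsymbol{\Phi}_{C^{\star}}(t)C^*$ and, transposing, $\frac{d}{dt}\boldsymbol{\Phi}_{C^{\star}}^{\top}(t)=\boldsymbol{\Phi}_{C^{\star}}^{\top}(t)(C^*)^{\top}$.

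Next I would write $\Sigma_{\epsilon}^c(t)=\boldsymbol{\Phi}_{C^{\star}}(t)M(t)\boldsymbol{\Phi}_{C^{\star}}^{\top}(t)$ with $M(t):=\Sigma_{\epsilon}^c(0)+\epsilon\int_{0}^{t}\big(\boldsymbol{\Phi}_{C^{\star}}^{-1}(s)\Gamma^{\star}(s)\big)\big(\boldsymbol{\Phi}_{C^{\star}}^{-1}(s)\Gamma^{\star}(s)\big)^{\top}ds$, exactly the form given in Theorem \ref{3-1}, and apply the product rule. The first and third terms of the derivative are $C^*\boldsymbol{\Phi}_{C^{\star}}(t)M(t)\boldsymbol{\Phi}_{C^{\star}}^{\top}(t)=C^*\Sigma_{\epsilon}^c(t)$ and $\boldsymbol{\Phi}_{C^{\star}}(t)M(t)\boldsymbol{\Phi}_{C^{\star}}^{\top}(t)(C^*)^{\top}=\Sigma_{\epsilon}^c(t)(C^*)^{\top}$, using the commutation property. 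For the middle term, $\dot M(t)=\epsilon\big(\boldsymbol{\Phi}_{C^{\star}}^{-1}(t)\Gamma^{\star}(t)\big)\big(\boldsymbol{\Phi}_{C^{\star}}^{-1}(t)\Gamma^{\star}(t)\big)^{\top}$ by the fundamental theorem of calculus, so $\boldsymbol{\Phi}_{C^{\star}}(t)\dot M(t)\boldsymbol{\Phi}_{C^{\star}}^{\top}(t)=\epsilon\Gamma^{\star}(t)(\Gamma^{\star}(t))^{\top}$ after the inner factors $\boldsymbol{\Phi}_{C^{\star}}(t)\boldsymbol{\Phi}_{C^{\star}}^{-1}(t)$ and $(\boldsymbol{\Phi}_{C^{\star}}^{-1}(t))^{\top}\boldsymbol{\Phi}_{C^{\star}}^{\top}(t)$ cancel.

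Collecting the three pieces gives $\frac{d}{dt}\Sigma_{\epsilon}^c(t)=C^*\Sigma_{\epsilon}^c(t)+\Sigma_{\epsilon}^c(t)(C^*)^{\top}+\epsilon\Gamma^{\star}(t)(\Gamma^{\star}(t))^{\top}$, and rearranging produces $\Sigma_{\epsilon}^c(t)(C^*)^{\top}+C^*\Sigma_{\epsilon}^c(t)+\big(\epsilon\Gamma^{\star}(t)(\Gamma^{\star}(t))^{\top}-\frac{d}{dt}\Sigma_{\epsilon}^c(t)\big)=\mathbb{O}$, which is precisely (\ref{3.36}). There is no serious obstacle; the only point demanding care is the transpose bookkeeping in the middle step, which succeeds only because $C^*$ commutes with $e^{C^*t}$ --- in the genuinely periodic case this fails, which is exactly why $\Sigma_{\epsilon}^c(t)$ no longer solves a frozen-coefficient continuous-type Lyapunov equation, as discussed in the Introduction. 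An equivalent route, avoiding the closed form, is to apply It\^o's formula to $\mathbf{Z}_{\epsilon}(t)\mathbf{Z}_{\epsilon}^{\top}(t)$ in (\ref{3.3}) with $C^{\star}(t)\equiv C^*$, take expectations, and subtract $\boldsymbol{\varphi}(t)\boldsymbol{\varphi}^{\top}(t)$ to reach the same matrix ODE.
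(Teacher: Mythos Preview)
Your argument is correct and considerably more direct than the paper's. The paper does not differentiate the closed form for $\Sigma_\epsilon^c(t)$; instead it first establishes, for each fixed $t$, a discrete-type Lyapunov equation $\Im_d\big(\Sigma_\epsilon^c(t),e^{C^*\theta},\epsilon\int_0^\theta e^{C^*(\theta-s)}\Gamma^\star(t+s)(\Gamma^\star(t+s))^\top e^{(C^*)^\top(\theta-s)}\,ds\big)=\mathbb{O}$, then differentiates that equation in $t$, performs a separate integral identity to show that $C^*\Sigma_\epsilon^c(t)+\Sigma_\epsilon^c(t)(C^*)^\top+\epsilon\Gamma^\star(t)(\Gamma^\star(t))^\top$ satisfies the \emph{same} discrete Lyapunov equation as $\frac{d}{dt}\Sigma_\epsilon^c(t)$, and finally invokes uniqueness (Lemma \ref{2,1}) to equate the two. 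Your product-rule computation bypasses all of this. What the paper's route buys is a direct link between the discrete equation (\ref{3.9}) that actually determines $\Sigma_\epsilon^c(0)$ and the continuous equation of Part I, making the ``PNOA generalizes uNA'' point in Remark \ref{33} structurally transparent; your route gets the identity faster but treats (\ref{3.9}) as already absorbed into the closed form.

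One small correction: the commutation of $C^*$ with $e^{C^*t}$ is not actually what makes your computation work. The identities $\dot\Phi=C^\star(t)\Phi$ and $\dot\Phi^\top=\Phi^\top(C^\star(t))^\top$ hold for any (possibly time-varying) $C^\star(t)$, so your product-rule calculation in fact yields $\frac{d}{dt}\Sigma_\epsilon^c(t)=C^\star(t)\Sigma_\epsilon^c(t)+\Sigma_\epsilon^c(t)(C^\star(t))^\top+\epsilon\Gamma^\star(t)(\Gamma^\star(t))^\top$ in general. The hypothesis that $f(t,\cdot)$ is time-independent enters only to make $C^\star(t)\equiv C^*$ constant, so that the resulting relation has the form $\Im_c(\Sigma_\epsilon^c(t),C^*,\cdot)=\mathbb{O}$ with a \emph{fixed} coefficient matrix --- not to rescue any transpose step.
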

	\begin{proof}
		In view of (\ref{3.6}), for any $t\ge 0$,
		\begin{equation*}
		\begin{split}
		\mathbf{Z}_{\epsilon}(t+\theta)=&\boldsymbol{\Phi}_{C^{\star}}(t+\theta)\bigg(\boldsymbol{\Phi}^{-1}_{C^{\star}}(t)\mathbf{Z}_{\epsilon}(t)+\sqrt{\epsilon}\int_{t}^{t+\theta}\boldsymbol{\Phi}_{C^{\star}}^{-1}(s)\Gamma^{\star}(s)d\mathbf{W}(s)\bigg)\\
		=&\boldsymbol{\Phi}_{C^{\star}}(\theta)\mathbf{Z}_{\epsilon}(t)+\sqrt{\epsilon}\boldsymbol{\Phi}_{C^{\star}}(t+\theta)\int_{0}^{\theta}\boldsymbol{\Phi}_{C^{\star}}^{-1}(t+s)\Gamma^{\star}(t+s)d\mathbf{W}(s).
		\end{split}
		\end{equation*}
		This together with the definition of $\Sigma_{\epsilon}^c(t)$ and (\ref{3.8}) implies
		\begin{equation}\label{3.37}
		\begin{split}
		\Sigma_{\epsilon}^c(t)=&\boldsymbol{\Phi}_{C^{\star}}(\theta)\bigg[\Sigma_{\epsilon}^c(t)+\epsilon\int_{0}^{\theta}\big(\boldsymbol{\Phi}_{C^{\star}}(t)\boldsymbol{\Phi}^{-1}_{C^{\star}}(t+s)\Gamma^{\star}(t+s)\big)\big(\boldsymbol{\Phi}_{C^{\star}}(t)\boldsymbol{\Phi}^{-1}_{C^{\star}}(t+s)\Gamma^{\star}(t+s)\big)^{\top}ds\bigg]\boldsymbol{\Phi}^{\top}_{C^{\star}}(\theta).
		\end{split}
		\end{equation}
		If $f(t,\cdot)$ is time-independent, we have $\boldsymbol{\Phi}_{C^{\star}}(t)=e^{C^*t}$. Then (\ref{3.37}) is simplified as
		\begin{equation}\label{3.38}
		\Im_d\bigg(\Sigma^c_{\epsilon}(t),e^{C^*\theta},\epsilon\int_{0}^{\theta}\big(e^{C^*(\theta-s)}\Gamma^{\star}(t+s)\big)\big(e^{C^*(\theta-s)}\Gamma^{\star}(t+s)\big)^{\top}ds\bigg)=\mathbb{O}.
		\end{equation}
		Taking the derivative on both sides of (\ref{3.38}) yields
		\begin{equation}\label{3.39}
		\Im_d\bigg(\frac{d(\Sigma^c_{\epsilon}(t))}{dt},e^{C^*\theta},\epsilon\int_{0}^{\theta}e^{C^*(\theta-s)}\frac{d(\Gamma^{\star}(t+s)(\Gamma^{\star}(t+s))^{\top})}{dt}e^{(C^*)^{\top}(\theta-s)}ds\bigg)=\mathbb{O}.
		\end{equation}
		By calculation,
		\begin{equation*}
		\begin{split}
		\big(e^{-C^*\theta}&\Gamma^{\star}(t)\big)\big(e^{-C^*\theta}\Gamma^{\star}(t)\big)^{\top}-\Gamma^{\star}(t)(\Gamma^{\star}(t))^{\top}\\
		=&\int_{0}^{\theta}\frac{d}{ds}\Big(\big(e^{-C^*s}\Gamma^{\star}(t+s)\big)\big(e^{-C^*s}\Gamma^{\star}(t+s)\big)^{\top}\Big)ds\\
		=&\int_{0}^{\theta}e^{-C^*s}\frac{d(\Gamma^{\star}(t+s)(\Gamma^{\star}(t+s))^{\top})}{dt}e^{-(C^*)^{\top}s}ds-C^*\int_{0}^{\theta}\big(e^{-C^*s}\Gamma^{\star}(t+s)\big)\big(e^{-C^*s}\Gamma^{\star}(t+s)\big)^{\top}ds\\
		&-\int_{0}^{\theta}\big(e^{-C^*s}\Gamma^{\star}(t+s)\big)\big(e^{-C^*s}\Gamma^{\star}(t+s)\big)^{\top}ds(C^*)^{\top}.
		\end{split}
		\end{equation*}
		Combining (\ref{3.38}), one gets
		\begin{equation*}
		\begin{split}
		C^*\Sigma_{\epsilon}^c(t)+\Sigma_{\epsilon}^c(t)(C^*)^{\top}=&e^{C^*\theta}\bigg[C^*\Sigma_{\epsilon}^c(t)+\Sigma_{\epsilon}^c(t)(C^*)^{\top}+\epsilon\int_{0}^{\theta}e^{-C^*s}\frac{d(\Gamma^{\star}(t+s)(\Gamma^{\star}(t+s))^{\top})}{dt}e^{-(C^*)^{\top}s}ds\notag\\
		&-\epsilon\big(e^{-C^*\theta}\Gamma^{\star}(t)\big)\big(e^{-C^*\theta}\Gamma^{\star}(t)\big)^{\top}+\epsilon\Gamma^{\star}(t)(\Gamma^{\star}(t))^{\top}\bigg]e^{(C^*)^{\top}\theta},
		\end{split}
		\end{equation*}
		which implies
		\begin{align}\label{3.40}
		\Im_d\bigg(C^*\Sigma_{\epsilon}^c(t)+&\Sigma_{\epsilon}^c(t)(C^*)^{\top}+\epsilon\Gamma^{\star}(t)(\Gamma^{\star}(t))^{\top},e^{C^*\theta},\notag\\
		&\epsilon\int_{0}^{\theta}e^{C^*(\theta-s)}\frac{d(\Gamma^{\star}(t+s)(\Gamma^{\star}(t+s))^{\top})}{dt}e^{(C^*)^{\top}(\theta-s)}ds\bigg)=\mathbb{O}.
		\end{align}
		As in Assumptions \ref{2;1}(c), we have $e^{C^*\theta}\in\overline{\mathbf{CM}}(n)$. It follows from (\ref{3.39}), (\ref{3.40}) and Lemma \ref{2,1} that
		\begin{equation*}
		C^*\Sigma_{\epsilon}^c(t)+\Sigma_{\epsilon}^c(t)(C^*)^{\top}+\epsilon\Gamma^{\star}(t)(\Gamma^{\star}(t))^{\top}-\frac{d(\Sigma^c_{\epsilon}(t))}{dt}=\mathbb{O},
		\end{equation*}
		i.e., (\ref{3.36}) holds. The proof is complete.
	\end{proof}
	\begin{remark}\label{33}
		{\rm{Under autonomous case, $\Gamma^{\star}(t)$ will degenerate into a constant matrix, denoted by $\Gamma^*$ for convenience. By (\ref{3.39}), one has $\frac{d(\Sigma^c_{\epsilon}(t))}{dt}=\mathbb{O}$ and
		$$\Im_c\bigl(\Sigma_{\epsilon}^c(t),C^*,\epsilon\Gamma^*(\Gamma^*)^{\top}\bigr)=\mathbb{O}.$$ By a similar argument in \cite[(4.5) and (4.6)]{S39}, we determine
		\begin{equation*}
		\Sigma_{\epsilon}^c(t)\equiv\epsilon\int_{0}^{\infty}\bigl(e^{C^*s}\Gamma^*\bigr)\bigl(e^{C^*s}\Gamma^*\bigr)^{\top}ds:=\Sigma_{[o]\epsilon},\ \ \ \forall\ t\ge 0,
		\end{equation*}
		which coincides with that of \cite[Theorem 4.1]{S39}. In this case, the SPS with the distribution $\mathbb{N}_n(\mathbf{X}^{\star}(t),\Sigma^c_{\epsilon}(t))$ is degenerated into a stationary solution which follows $\mathbb{N}_n(\mathbf{X}^*,\Sigma_{[o]\epsilon})$. As a result, the PNOA method is a generalization of the uNA method in \cite{S39} involving the periodicity.
		}}
	\end{remark}
	As was alluded to, we provide a criterion (\ref{3.5}) for verifying $\Sigma^c_{\epsilon}(t)\succ\mathbb{O}$. But for a special case, where 
	the form of $\int_{0}^{\theta}\big(\boldsymbol{\Phi}_{C^{\star}}(\theta)\boldsymbol{\Phi}^{-1}_{C^{\star}}(t)\Gamma^{\star}(t)\big)\big(\boldsymbol{\Phi}_{C^{\star}}(\theta)\boldsymbol{\Phi}^{-1}_{C^{\star}}(t)\Gamma^{\star}(t)\big)^{\top}dt$ is complicated and the fundamental matrix $\boldsymbol{\Phi}_{C^{\star}}(\theta)$ is ``simple" in the sense that approaching the canonical form $\mathscr{T}(\cdot)$, the relevant analysis becomes tedious. To simplify, a modified criterion for analyzing $\Sigma^c_{\epsilon}(t)\succ\mathbb{O}$ is presented below.

	By (\ref{3.1}) and the definition of $\xi$, there exists two constants $\lambda_{\vartriangle}^+>0$, $\overline{\xi}\in\mathbb{S}_{\xi}^{-1}$ and a set $\{\overline{\phi}_1,...,\overline{\phi}_{\overline{\xi}}\}:=\overline{\boldsymbol{\phi}}\subseteq\mathbb{S}_n^0$ such that
	\begin{equation}\label{3.41}
	\int_{0}^{\theta}\big(\boldsymbol{\Phi}_{C^{\star}}(\theta)\boldsymbol{\Phi}^{-1}_{C^{\star}}(t)\Gamma^{\star}(t)\big)\big(\boldsymbol{\Phi}_{C^{\star}}(\theta)\boldsymbol{\Phi}^{-1}_{C^{\star}}(t)\Gamma^{\star}(t)\big)^{\top}dt\succeq\lambda_{\vartriangle}^{+}\sum_{k=1}^{\overline{\xi}}\amalg_{n,\overline{\phi}_k},
	\end{equation}
	where $\overline{\phi}_j>\overline{\phi}_i$, $\forall\ j>i$. If $\overline{\boldsymbol{\phi}}=\emptyset$, i.e., $\overline{\xi}=0$, we stipulate $\lambda_{\vartriangle}^{+}=1$.
	\begin{theorem}\label{3-2}
		Let Assumption \ref{2;1} hold. Then
		\begin{equation}\label{3.42}
		\mathbf{X}^{\top}\Sigma^c_{\epsilon}(0)\mathbf{X}\ge\overline{\rho}_{\epsilon}\sum_{k=1}^{\overline{\xi}}\Bigl(X_{\overline{\phi}_k}^2+\sum_{j=2}^{\overline{\eta}_k}(\mathbf{H}_{[2]\overline{\phi}_k,j}^{(j)})^2\Bigr),
		\end{equation}
		where $\overline{\rho}_{\epsilon}>0$ is given in (\ref{3.44}), $\mathbf{X}$ is the same as in (\ref{3.5}), and $$\mathbf{H}_{[2]\overline{\phi}_k,l}=\Big(\prod_{i=0}^{l-1}(Q_{[2]\overline{\phi}_k,i}^{-1})^{\top}P_{[2]\overline{\phi}_k,i}\Big)J_{[2]\overline{\phi}_k}\mathbf{X},\ \ \ \forall\ l\in\mathbb{S}_{\overline{\eta}_k}^0,$$
		with $\overline{\eta}_k,\ J_{[2]\overline{\phi}_k},\ P_{[2]\overline{\phi}_k,i}$ and $ Q_{[2]\overline{\phi}_k,i}$ shown in Algorithm \ref{2}.
		\\
		\begin{threeparttable}
			\begin{algorithm}[H]\label{2}
				\caption{Algorithm for obtaining $\mathbf{H}_{[2]\overline{\phi}_k,l}$}
				\LinesNumbered
				\KwIn{\rm{$\boldsymbol{\Phi}_{C^{\star}}(\theta)$, $\overline{\boldsymbol{\phi}}$.}}
				\KwOut{\rm{$\mathbf{H}_{[2]\overline{\phi}_k,l},\ \forall\ l\in\mathbb{S}_{\overline{\eta}_k}^0$.}
				}			
				\rm{\textbf{(Initialization):} $\overline{\eta}_k:=1,\ \overline{A}_{[2]\overline{\phi}_k,1}:=J_{[2]\overline{\phi}_k}\boldsymbol{\Phi}_{C^{\star}}(\theta)J_{[2]\overline{\phi}_k}^{-1}$\;}
				\rm{\textbf{(Recursive framework):} Using Gaussian-like elimination method, we can construct a FOR loop similar to Algorithm \ref{1}, to obtain the values of $\overline{\eta}_k$ and some suitable $\overline{\nu}_{k(i)}\in\mathbb{S}_n^i$, which yields 
				\\
				\ \ \ \ \ \ \ \ \ \ \ \ \ \ \ \ \ \ $\text{(2-i)}\ \overline{a}_{\overline{\nu}_{k(i)},i}^{\lfloor[2]\overline{\phi}_k,i\rfloor}\neq 0$\tnote{\textcolor{blue}{$\ddag$}},\ $\forall\ i\in\mathbb{S}_{\overline{\eta}_k-1}^0,\ \ \ \text{(2-ii)}\ \overline{a}_{j,\overline{\eta}_k}^{\lfloor[2]\overline{\phi}_k,\overline{\eta}_k\rfloor}=0,\ \forall\ j\in\mathbb{S}_n^{\overline{\eta}_k+1}$,\\
				where each $\overline{a}_{ji}^{\lfloor[2]\overline{\phi}_k,i\rfloor}$ is determined by the iterative scheme:
				\\
				\ \ \ \ \ \ \ \ \ \ \ \ $\text{(2-iii)}\ \widehat{A}_{[2]\overline{\phi}_k,i}:=P_{[2]\overline{\phi}_k,i}\overline{A}_{[2]\overline{\phi}_k,i}P_{[2]\overline{\phi}_k,i}^{-1}$\tnote{\textcolor{blue}{$\ddag$}},\ \ $\text{and}\ \ \overline{A}_{[2]\overline{\phi}_k,i+1}:=Q_{[2]\overline{\phi}_k,i}\widehat{A}_{[2]\overline{\phi}_k,i}Q_{[2]\overline{\phi}_k,i}^{-1}$\tnote{\textcolor{blue}{$\ddag$}},\ \ \ $\forall\ i\in\mathbb{S}_{\overline{\eta}_k}^0$}\;
				\rm{\textbf{return}} $\overline{\eta}_k$, $J_{[2]\overline{\phi}_k}$, $P_{[2]\overline{\phi}_k,i}$, $Q_{[2]\overline{\phi}_k,i}\ (i\in\mathbb{S}_{\overline{\eta}_k-1}^0).$
			\end{algorithm}
			\begin{tablenotes}
				\footnotesize
				\item[\rm{\textcolor{blue}{$\ddag$}}] \rm{$J_{[2]\overline{\phi}_k},\ P_{[2]\overline{\phi}_k,i}$ and $ Q_{[2]\overline{\phi}_k,i}$ have the same form as $J_{[1]\phi_k},\ P_{[1]\phi_k,i}$ and $Q_{[1]\phi_k,i}$ by only replacing $(\phi_k,\nu_{k(i)},\boldsymbol{\ell}_{[1]k,n-1-i})$ with $(\overline{\phi}_k,\overline{\nu}_{k(i)},\overline{\boldsymbol{\ell}}_{[2]k,n-1-i})$, where $\overline{\boldsymbol{\ell}}_{[2]k,n-1-i}=\frac{-1}{\widehat{a}_{i+1,i}^{\lfloor[2]\overline{\phi}_k,i\rfloor}}(\widehat{a}_{i+2,i}^{\lfloor[2]\overline{\phi}_k,i\rfloor},...,\widehat{a}_{n,i}^{\lfloor[2]\overline{\phi}_k,i\rfloor})^{\top}$. Analogously, we stipulate $\overline{a}_{\overline{\nu}_{k(0)},0}^{\lfloor[2]\overline{\phi}_k,0\rfloor}=1$ and $P_{[2]\overline{\phi}_k,m}=Q_{[2]\overline{\phi}_k,m}=\mathbf{I}_n$, $\forall\ k\in\mathbb{S}_{\overline{\xi}}^0;m\in\{0,n-1\}$.}
			\end{tablenotes}
		\end{threeparttable}
	\end{theorem}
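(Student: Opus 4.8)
The plan is to re-run the two-step scheme of the proof of Theorem \ref{3-1}, but with the fundamental matrix $\boldsymbol{\Phi}_{C^{\star}}(\theta)$ itself playing the role that $A_{[1]}=\mathcal{G}_0\boldsymbol{\Phi}_{C^{\star}}(\theta)\mathcal{G}_0^{-1}$ played there, so that the orthogonal conjugation $\mathcal{G}_0$ is no longer needed. Write $\mathscr{K}$ for the matrix on the left-hand side of (\ref{3.41}). By Assumption \ref{2;1}(c), $\boldsymbol{\Phi}_{C^{\star}}(\theta)\in\overline{\mathbf{CM}}(n)$, so (by (\ref{3.9}), (\ref{a.3}), (\ref{3.35})) the discrete Lyapunov solution map $\aleph\mapsto\sum_{i\ge 0}\boldsymbol{\Phi}_{C^{\star}}^{i}(\theta)\,\aleph\,(\boldsymbol{\Phi}_{C^{\star}}^{i}(\theta))^{\top}$ converges and $\Sigma^c_{\epsilon}(0)$ is its value at $\epsilon\mathscr{K}$. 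This map is linear and order-preserving on the positive semidefinite cone, so the lower bound (\ref{3.41}) gives
\[
\Sigma^c_{\epsilon}(0)\ \succeq\ \epsilon\lambda_{\vartriangle}^{+}\sum_{k=1}^{\overline{\xi}}\widetilde{\Sigma}_{[2]\overline{\phi}_k},\qquad\text{where}\ \ \Im_d\bigl(\widetilde{\Sigma}_{[2]\overline{\phi}_k},\boldsymbol{\Phi}_{C^{\star}}(\theta),\amalg_{n,\overline{\phi}_k}\bigr)=\mathbb{O},\ \ k\in\mathbb{S}_{\overline{\xi}}^{0}.
\]
If $\overline{\xi}=0$ the sum in (\ref{3.42}) is empty and the claim is trivial, so I may assume $\overline{\xi}\ge 1$.

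Second, for each $k$ I would solve $\Im_d(\widetilde{\Sigma}_{[2]\overline{\phi}_k},\boldsymbol{\Phi}_{C^{\star}}(\theta),\amalg_{n,\overline{\phi}_k})=\mathbb{O}$ by the Gaussian-like elimination method of Algorithm \ref{2}, which is precisely Step 1 in the proof of Theorem \ref{3-1} with $\boldsymbol{\Phi}_{C^{\star}}(\theta)$ in place of $A_{[1]}$: conjugation by $J_{[2]\overline{\phi}_k}$ turns $\amalg_{n,\overline{\phi}_k}$ into $\amalg_{n,1}$ and $\boldsymbol{\Phi}_{C^{\star}}(\theta)$ into $\overline{A}_{[2]\overline{\phi}_k,1}\in\overline{\mathbf{CM}}(n)$; the rotations $P_{[2]\overline{\phi}_k,i}$ and eliminations $Q_{[2]\overline{\phi}_k,i}$ drive it, after $\overline{\eta}_k$ steps, to a matrix whose leading $\overline{\eta}_k\times\overline{\eta}_k$ block is an upper $\mathbf{CM}$--Hessenberg matrix and, after the standardizing conjugation $M_{[2]\overline{\phi}_k,\overline{\eta}_k}$ built from Proposition \ref{2&2}, lies in $\mathscr{T}(\overline{\eta}_k)$. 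Repeating the block analysis (\ref{3.23})--(\ref{3.29}) verbatim --- the off-diagonal and tail blocks ($\pounds_1,\pounds_2$ in the notation of (\ref{3.24})) vanish because both $\overline{A}_{[2]\overline{\phi}_k,\overline{\eta}_k}^{(\overline{\eta}_k)}$ and its complementary block $\underline{A}_{2,\overline{\eta}_k}$ lie in $\overline{\mathbf{CM}}$ and because $1$ is not an eigenvalue of $A_{[2]s,\overline{\phi}_k}^{(\overline{\eta}_k)}$ --- and then invoking Proposition \ref{2&1} yields $\widetilde{\Sigma}_{[2]\overline{\phi}_k}$ in the same closed form as $\Sigma_{[1]\phi_k}$ in Algorithm \ref{1}, with a positive definite standard block $\Xi_{[2]\overline{\phi}_k,\overline{\eta}_k}\succ\mathbb{O}$ (the analogue of $\Xi_{[1]\phi_k,\eta_k}$).

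Third, I would turn this into the quadratic-form estimate exactly as in Step 2 of the proof of Theorem \ref{3-1}. From $J_{[2]\overline{\phi}_k}^{-1}=J_{[2]\overline{\phi}_k}^{\top}$ and $P_{[2]\overline{\phi}_k,i}^{-1}=P_{[2]\overline{\phi}_k,i}^{\top}$ one has $\mathbf{H}_{[2]\overline{\phi}_k,\overline{\eta}_k}=\big(((\textstyle\prod_{i=0}^{\overline{\eta}_k-1}Q_{[2]\overline{\phi}_k,i}P_{[2]\overline{\phi}_k,i})J_{[2]\overline{\phi}_k})^{-1}\big)^{\top}\mathbf{X}$; substituting the closed form of $\widetilde{\Sigma}_{[2]\overline{\phi}_k}$, using the identity $\mathbf{H}_{[2]\overline{\phi}_k,l}^{\langle j\rangle}=\mathbf{H}_{[2]\overline{\phi}_k,j}^{\langle j\rangle}$ (the analogue of (\ref{3.32})) and letting $\widehat{\lambda}_k>0$ be the least eigenvalue of $\mathcal{M}_{[2]\overline{\eta}_k}^{-1}\Xi_{[2]\overline{\phi}_k,\overline{\eta}_k}(\mathcal{M}_{[2]\overline{\eta}_k}^{-1})^{\top}$, one reaches
\begin{align*}
\mathbf{X}^{\top}\widetilde{\Sigma}_{[2]\overline{\phi}_k}\mathbf{X}
&\ge\widehat{\lambda}_k\Bigl(\prod_{i=0}^{\overline{\eta}_k-1}\overline{a}_{\overline{\nu}_{k(i)},i}^{\lfloor[2]\overline{\phi}_k,i\rfloor}\Bigr)^{2}\sum_{j=1}^{\overline{\eta}_k}\bigl(\mathbf{H}_{[2]\overline{\phi}_k,\overline{\eta}_k}^{(j)}\bigr)^{2}\\
&=\widehat{\lambda}_k\Bigl(\prod_{i=0}^{\overline{\eta}_k-1}\overline{a}_{\overline{\nu}_{k(i)},i}^{\lfloor[2]\overline{\phi}_k,i\rfloor}\Bigr)^{2}\Bigl(X_{\overline{\phi}_k}^{2}+\sum_{j=2}^{\overline{\eta}_k}\bigl(\mathbf{H}_{[2]\overline{\phi}_k,j}^{(j)}\bigr)^{2}\Bigr),
\end{align*}
where $\mathbf{H}_{[2]\overline{\phi}_k,1}^{(1)}=X_{\overline{\phi}_k}$ because $J_{[2]\overline{\phi}_k}$ merely moves the $\overline{\phi}_k$-th coordinate to the top (this is why $X_{\overline{\phi}_k}$, and not a rotated coordinate as in (\ref{3.5}), appears here). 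Putting $\overline{\rho}_{\epsilon}:=\epsilon\lambda_{\vartriangle}^{+}\min_{k\in\mathbb{S}_{\overline{\xi}}^{0}}\bigl\{\widehat{\lambda}_k(\prod_{i=0}^{\overline{\eta}_k-1}\overline{a}_{\overline{\nu}_{k(i)},i}^{\lfloor[2]\overline{\phi}_k,i\rfloor})^{2}\bigr\}>0$ (this is (\ref{3.44})) and summing over $k$ gives (\ref{3.42}).

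The heaviest step is the second, but it reproduces Step 1 of the proof of Theorem \ref{3-1} line by line; the only thing that genuinely has to be checked there is that running the Gaussian-like elimination directly on $\boldsymbol{\Phi}_{C^{\star}}(\theta)$ --- rather than on the orthogonally prepared $A_{[1]}$ --- still produces a bona fide standard $\mathbb{L}_0$-algebraic block, i.e.\ that all intermediate blocks stay in $\overline{\mathbf{CM}}$ so that Propositions \ref{2&1}--\ref{2&2} apply and the tail pieces collapse, which is ensured by Assumption \ref{2;1}(c) just as before. Hence the main obstacle, and the one genuinely new point, is the passage from the \emph{inequality} (\ref{3.41}) --- in place of the exact spectral decomposition (\ref{3.1}) used for Theorem \ref{3-1} --- to the matrix estimate for $\Sigma^c_{\epsilon}(0)$, which rests on the linearity and order-preservation of the discrete Lyapunov solution map, together with the trivial treatment of the degenerate case $\overline{\xi}=0$.
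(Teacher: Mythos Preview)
Your proposal is correct and follows essentially the same route as the paper's own proof: the paper introduces the auxiliary equations $\Im_d(\widehat{\Sigma}_{[2]\overline{\phi}_k},\boldsymbol{\Phi}_{C^{\star}}(\theta),\amalg_{n,\overline{\phi}_k})=\mathbb{O}$, invokes the argument of (\ref{3.12})--(\ref{3.34}) verbatim to obtain the quadratic-form bound (\ref{3.44}), and then handles the passage from (\ref{3.41}) to $\Sigma^c_{\epsilon}(0)\succeq\epsilon\lambda_{\vartriangle}^{+}\sum_k\widehat{\Sigma}_{[2]\overline{\phi}_k}$ by writing the remainder as an auxiliary Lyapunov equation (\ref{3.45}) whose series solution is visibly $\succeq\mathbb{O}$ --- exactly your ``linearity and order-preservation of the discrete Lyapunov solution map'' phrased in coordinates. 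The only cosmetic difference is that the paper packages the eigenvalue constants into a single $\widehat{\lambda}_{\vartriangle}$ outside the minimum in (\ref{3.44}), whereas you keep a $k$-dependent $\widehat{\lambda}_k$ inside; both yield a valid positive $\overline{\rho}_{\epsilon}$.
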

	\begin{proof}
		Let $\widehat{\Sigma}_{[2]\overline{\phi}_k}\ (k\in\mathbb{S}_{\overline{\xi}}^0)$ be the solutions of the following algebraic equations, respectively:
		\begin{equation}\label{3.43}
		\Im_d\bigl(\widehat{\Sigma}_{[2]\overline{\phi}_k},\boldsymbol{\Phi}_{C^{\star}}(\theta),\amalg_{n,\overline{\phi}_k}\bigr)=\mathbb{O}.
		\end{equation}
		Combining Algorithm \ref{2} and a similar argument in (\ref{3.12})-(\ref{3.34}) yields that there is a $\widehat{\lambda}_{\vartriangle}>0$ such that
		\begin{align}\label{3.44}
		\mathbf{X}^{\top}\bigg(\sum_{k=1}^{\overline{\xi}}\widehat{\Sigma}_{[2]\overline{\phi}_k,\epsilon}\bigg)\mathbf{X}\ge&\widehat{\lambda}_{\vartriangle}\min_{k\in\mathbb{S}_{\overline{\xi}}^0}\bigg\{\Bigl(\prod_{i=0}^{\overline{\eta}_k-1}\overline{a}_{\overline{\nu}_{k(i)},i}^{\lfloor[2]\overline{\phi}_k,i\rfloor}\Bigr)^2\bigg\}\sum_{k=1}^{\overline{\xi}}\sum_{j=1}^{\overline{\eta}_k}\bigl(\mathbf{H}_{[2]\overline{\phi}_k,j}^{(j)}\bigr)^2\notag\\
		:=&\frac{\overline{\rho}_{\epsilon}}{\epsilon\lambda_{\vartriangle}^{+}}\sum_{k=1}^{\overline{\xi}}\sum_{j=1}^{\overline{\eta}_k}\bigl(\mathbf{H}_{[2]\overline{\phi}_k,j}^{(j)}\bigr)^2\notag\\
		=&\frac{\overline{\rho}_{\epsilon}}{\epsilon\lambda_{\vartriangle}^{+}}\sum_{k=1}^{\overline{\xi}}\Bigl(X_{\overline{\phi}_k}^2+\sum_{j=2}^{\overline{\eta}_k}(\mathbf{H}_{[2]\overline{\phi}_k,j}^{(j)})^2\Bigr),
		\end{align}
		where in the last equality, we have used the fact $\mathbf{H}_{[2]\overline{\phi}_k,1}^{(1)}=X_{\overline{\phi}_k}$, $\forall\ k\in\mathbb{S}_{\overline{\xi}}^0$.

		Below we consider an auxiliary Lyapunov equation:
		\begin{equation}\label{3.45}
		\Im_d\biggl(\Sigma_{\text{aux}},\boldsymbol{\Phi}_{C^{\star}}(\theta),\epsilon\Big(\int_{0}^{\theta}\big(\boldsymbol{\Phi}_{C^{\star}}(\theta)\boldsymbol{\Phi}^{-1}_{C^{\star}}(t)\Gamma^{\star}(t)\big)\big(\boldsymbol{\Phi}_{C^{\star}}(\theta)\boldsymbol{\Phi}^{-1}_{C^{\star}}(t)\Gamma^{\star}(t)\big)^{\top}dt-\lambda_{\vartriangle}^{+}\sum_{k=1}^{\overline{\xi}}\amalg_{n,\overline{\phi}_k}\Big)\biggr)=\mathbb{O}.
		\end{equation}
		Under Assumption \ref{2;1}(c), by (\ref{3.41}) and a standard argument in (\ref{a.2})-(\ref{a.4}), one has $\lim_{k\rightarrow\infty}\boldsymbol{\Phi}_{C^{\star}}^k(\theta)=\mathbb{O}$, and
		\begin{equation*}
		\Sigma_{\text{aux}}=\epsilon\sum_{k=0}^{\infty}\boldsymbol{\Phi}_{C^{\star}}^k(\theta)\Big(\int_{0}^{\theta}\big(\boldsymbol{\Phi}_{C^{\star}}(\theta)\boldsymbol{\Phi}^{-1}_{C^{\star}}(t)\Gamma^{\star}(t)\big)\big(\boldsymbol{\Phi}_{C^{\star}}(\theta)\boldsymbol{\Phi}^{-1}_{C^{\star}}(t)\Gamma^{\star}(t)\big)^{\top}dt-\lambda_{\vartriangle}^{+}\sum_{k=1}^{\overline{\xi}}\amalg_{n,\overline{\phi}_k}\Big)(\boldsymbol{\Phi}_{C^{\star}}^k(\theta))^{\top}\succeq\mathbb{O}.
		\end{equation*}
	   Both (\ref{3.43}) and (\ref{3.45}) imply
	\begin{equation}\label{3.46}
	\Sigma^c_{\epsilon}(0)=\epsilon\lambda_{\vartriangle}^{+}\sum_{k=1}^{\overline{\xi}}\widehat{\Sigma}_{[2]\overline{\phi}_k}+\Sigma_{\text{aux}}.
	\end{equation}
	Thus, the desired result follows from (\ref{3.44}) and (\ref{3.46}).
	\end{proof}
	Using Theorems \ref{3-1} and \ref{3-2}, we provide some sufficient conditions for $\Sigma^c_{\epsilon}(t)\succ\mathbb{O}$, which are stated as in the following Corollary.  
	\begin{corollary}\label{3/1}
		Under Assumption \ref{2;1}, if one of the following four conditions holds:
		$${\rm{(i)}}\ \xi=n,\ \  {\rm{(ii)}}\ \eta_{k_1}=n,\ \exists\ k_1\in\mathbb{S}_{\xi}^0,\ \ {\rm{(iii)}}\ \overline{\xi}=n,\ \  {\rm{(iv)}}\ \overline{\eta}_{k_2}=n,\ \exists\ k_2\in\mathbb{S}_{\overline{\xi}}^0.$$
		Then $\Sigma^c_{\epsilon}(t)\succ\mathbb{O}$, $\forall\ t\ge 0$.
	\end{corollary}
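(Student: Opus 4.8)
The plan is to derive the corollary as a direct consequence of the quadratic lower bounds (\ref{3.5}) and (\ref{3.42}). Since $\Sigma^c_\epsilon(0)$ is a symmetric matrix (it is the unique symmetric solution of the discrete Lyapunov equation (\ref{3.9}) by Lemma \ref{2,1}), proving $\Sigma^c_\epsilon(0)\succ\mathbb{O}$ amounts to showing $\mathbf{X}^\top\Sigma^c_\epsilon(0)\mathbf{X}>0$ for every $\mathbf{X}\neq\mathbf{0}$, and in each of the four cases I will in fact exhibit a constant $c_\epsilon>0$ with $\mathbf{X}^\top\Sigma^c_\epsilon(0)\mathbf{X}\ge c_\epsilon|\mathbf{X}|^2$. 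Once $\Sigma^c_\epsilon(0)\succ\mathbb{O}$ is known, the conclusion $\Sigma^c_\epsilon(t)\succ\mathbb{O}$ for all $t\ge 0$ follows from the inequality at the end of Remark \ref{22}, namely $\mathbf{X}^\top\Sigma^c_\epsilon(t)\mathbf{X}\ge(\boldsymbol{\Phi}^\top_{C^\star}(t)\mathbf{X})^\top\Sigma^c_\epsilon(0)(\boldsymbol{\Phi}^\top_{C^\star}(t)\mathbf{X})$, together with the nonsingularity of the fundamental matrix $\boldsymbol{\Phi}_{C^\star}(t)$: if $\mathbf{X}\neq\mathbf{0}$ then $\boldsymbol{\Phi}^\top_{C^\star}(t)\mathbf{X}\neq\mathbf{0}$, so the right-hand side is strictly positive.

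Cases (i) and (iii) are immediate. If $\xi=n$, then the strictly increasing indices $\phi_1<\cdots<\phi_\xi$ lying in $\{1,\dots,n\}$ must exhaust $\{1,\dots,n\}$, so $\phi_k=k$ for each $k$; substituting this into (\ref{3.5}) and discarding the nonnegative terms $\sum_{j=2}^{\eta_k}(\mathbf{H}_{[1]\phi_k,j}^{(j)})^2$ gives $\mathbf{X}^\top\Sigma^c_\epsilon(0)\mathbf{X}\ge\rho_\epsilon\sum_{k=1}^n Z_k^2=\rho_\epsilon|\mathbf{Z}|^2=\rho_\epsilon|\mathcal{G}_0\mathbf{X}|^2=\rho_\epsilon|\mathbf{X}|^2$, the last equality by the orthogonality of $\mathcal{G}_0$. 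Likewise, if $\overline{\xi}=n$ then $\overline{\phi}_k=k$, and (\ref{3.42}) gives $\mathbf{X}^\top\Sigma^c_\epsilon(0)\mathbf{X}\ge\overline{\rho}_\epsilon\sum_{k=1}^n X_{\overline{\phi}_k}^2=\overline{\rho}_\epsilon|\mathbf{X}|^2$. In either case $\Sigma^c_\epsilon(0)\succeq c_\epsilon\mathbf{I}_n$ with $c_\epsilon>0$.

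For cases (ii) and (iv) I would keep a single summand and use the consistency relation (\ref{3.32}) for the auxiliary vectors $\mathbf{H}_{[1]\phi_k,j}$ (respectively, its analogue for $\mathbf{H}_{[2]\overline{\phi}_k,j}$, which is part of the ``similar argument in (\ref{3.12})--(\ref{3.34})'' behind (\ref{3.44})). Suppose $\eta_{k_1}=n$ for some $k_1\in\mathbb{S}_\xi^0$. Retaining only the term $k=k_1$ in (\ref{3.5}), and using $\mathbf{H}_{[1]\phi_{k_1},1}^{(1)}=Z_{\phi_{k_1}}$ together with $\mathbf{H}_{[1]\phi_{k_1},j}^{(j)}=\mathbf{H}_{[1]\phi_{k_1},n}^{(j)}$ for $2\le j\le n$ (this is (\ref{3.32}) with $l=\eta_{k_1}=n$), one gets $\mathbf{X}^\top\Sigma^c_\epsilon(0)\mathbf{X}\ge\rho_\epsilon\sum_{j=1}^n(\mathbf{H}_{[1]\phi_{k_1},n}^{(j)})^2=\rho_\epsilon|\mathbf{H}_{[1]\phi_{k_1},n}|^2$. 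Since $\mathbf{H}_{[1]\phi_{k_1},n}=\bigl(\prod_{i=0}^{n-1}(Q_{[1]\phi_{k_1},i}^{-1})^\top P_{[1]\phi_{k_1},i}\bigr)J_{[1]\phi_{k_1}}\mathbf{Z}$ is obtained from $\mathbf{Z}=\mathcal{G}_0\mathbf{X}$ by applying a product of nonsingular matrices, $\mathbf{H}_{[1]\phi_{k_1},n}=\mathbf{0}$ forces $\mathbf{X}=\mathbf{0}$; hence $\mathbf{X}^\top\Sigma^c_\epsilon(0)\mathbf{X}>0$ for all $\mathbf{X}\neq\mathbf{0}$. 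Case (iv) is handled identically, with (\ref{3.42}), $\mathbf{H}_{[2]\overline{\phi}_{k_2},\cdot}$ and $\overline{\rho}_\epsilon$ replacing (\ref{3.5}), $\mathbf{H}_{[1]\phi_{k_1},\cdot}$ and $\rho_\epsilon$, the only difference being that the relevant nonsingular transformation is applied directly to $\mathbf{X}$ (no factor $\mathcal{G}_0$). Combining the four cases yields $\Sigma^c_\epsilon(0)\succ\mathbb{O}$, and the passage to general $t$ is as described above.

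The argument is essentially bookkeeping, and I expect the only point requiring genuine care to be the use of the consistency property (\ref{3.32}) and its counterpart for $\mathbf{H}_{[2]\overline{\phi}_k,j}$: one must know that the $j$-th coordinate of these vectors stabilises once the index reaches $j$, which is precisely what lets the truncated sum be rewritten as a full Euclidean norm when $\eta_{k_1}=n$ (resp. $\overline{\eta}_{k_2}=n$); verifying the $\mathbf{H}_{[2]}$ version means re-running the relevant step of the proof of Theorem \ref{3-2} rather than merely quoting its statement. The remaining ingredients---orthogonality of $\mathcal{G}_0$, nonsingularity of $J_{[\cdot]}$, $P_{[\cdot]}$, $Q_{[\cdot]}$ and of $\boldsymbol{\Phi}_{C^\star}(t)$, and the elementary observation that $\xi=n$ (resp. $\overline{\xi}=n$) forces $\phi_k=k$ (resp. $\overline{\phi}_k=k$)---are routine.
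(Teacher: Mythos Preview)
Your proposal is correct and follows essentially the same approach as the paper's own proof: reduce to $\Sigma^c_\epsilon(0)\succ\mathbb{O}$, invoke (\ref{3.5}) for cases (i)--(ii) and (\ref{3.42}) for cases (iii)--(iv), use $\phi_k=k$ (resp.\ $\overline{\phi}_k=k$) in the first subcases and the stabilisation property (\ref{3.32}) together with nonsingularity of the transformation in the second, then pass to general $t$ via the congruence inequality in Remark~\ref{22}. The paper's version is slightly terser but otherwise identical in structure and content.
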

	\begin{proof}
		As shown before in Remark \ref{22}, it is sufficient to verify that $\mathbf{X}^{\top}\Sigma^c_{\epsilon}(0)\mathbf{X}=0$ holds if and only if $\mathbf{Z}=\mathbf{0}$ or $\mathbf{X}=\mathbf{0}$. The related analysis can be divided into the following three cases:

		Case 1. If condition (i) is satisfied, we have $\boldsymbol{\phi}=\mathbb{S}_n^0$, i.e., $\phi_i=i,\ \forall\ i\in\mathbb{S}_n^0$. In view of (\ref{3.5}),
		\begin{equation*}
		0=\mathbf{X}^{\top}\Sigma^c_{\epsilon}(0)\mathbf{X}\ge\rho_{\epsilon}\sum_{k=1}^{n}Z_k^2=\rho_{\epsilon}|\mathbf{Z}|^2.
		\end{equation*}
		Intuitively, $\mathbf{Z}=\mathbf{0}$ is required.

		Case 2. If condition (ii) is satisfied, using (\ref{3.5}) and (\ref{3.32}),
		\begin{equation*}
		0=\mathbf{X}^{\top}\Sigma^c_{\epsilon}(0)\mathbf{X}\ge\rho_{\epsilon}\biggl(Z_{\phi_{k_1}}^2+\sum_{j=2}^{\eta_{k_1}}(\mathbf{H}_{[1]\phi_{k_1},n}^{(j)})^2\biggr)=\rho_{\epsilon}|\mathbf{H}_{[1]\phi_{k_1},n}|^2.
		\end{equation*}
		It then follows from $\mathbf{H}_{[1]\phi_{k_1},n}=\bigl[\prod_{i=0}^{n-1}(Q_{[1]\phi_{k_1},i}^{-1})^{\top}P_{[1]\phi_{k_1},i}\bigr]J_{[1]\phi_{k_1}}\mathbf{Z}$ that $\mathbf{Z}=\mathbf{0}$.

		Case 3. Under condition (iii) (resp., (iv)), the desired results can be obtained by (\ref{3.42}) together with a similar argument in case 1 (resp., case 2), and is thus omitted.
	\end{proof}
	\begin{remark}\label{44}
	{\rm{We highlight a special diffusion of (\ref{1.2}):
	\begin{equation}\label{3.47}
	G_c(t,\mathbf{X}_{\epsilon})(G_c(t,\mathbf{X}_{\epsilon}))^{\top}=\text{diag}\bigl\{\sigma_1^2(t)X_{\epsilon,1}^2,...,\sigma_n^2(t)X_{\epsilon,n}^2\bigr\},
	\end{equation}
	where $\sigma_i(t)$ is a positive $\theta$-periodic function, $\forall\ i\in\mathbb{S}_n^0$. In fact, (\ref{3.47}) is known as periodic linear diffusion \cite{S47,S48}, and is a common way to introduce stochasticity and periodicity into
	biologically realistic dynamic models. If further $X_{\epsilon,i}(t)\neq 0,\ \forall\ (t,i)\in[0,\infty)\times\mathbb{S}_n^0$, then $\Gamma^{\star}(t)(\Gamma^{\star}(t))^{\top}\succ\mathbb{O}$, which falls into case (i). Thus, $\Sigma^c_{\epsilon}(t)\succ\mathbb{O}$.}}
	\end{remark}
\section{Periodic log-normal approximation (PLNA)}
In this section, we aim to provide a PLNA method for explicit approximation of the SPSD of (\ref{1.2}), with right-skewed probability distributions.

Under Assumption \ref{2;2}, it follows from (\ref{2.3}) and the It\^{o}'s formula that 
\begin{equation}\label{4.1}
d(\ln X_{\epsilon,i}(t)-\Psi^{\star}_{\epsilon,i}(t))=\bigl(F_i(t,\mathbf{X}_{\epsilon}(t))-F_i(t,e^{\boldsymbol{\Psi}^{\star}_{\epsilon}(t)})\bigr)dt+\sqrt{\epsilon}\sum\limits_{j=1}^{N}g_{ij}(t,\mathbf{X}_{\epsilon}(t))dW_j(t),\ \ \ \ i\in\mathbb{S}_n^0.
\end{equation}
Then by Taylor expansion, the linearized equations of (\ref{1.2}) near $\mathbf{X}^*$ is
\begin{equation}\label{4.2}
\begin{cases}
d\mathbf{Y}_{\epsilon}(t)=\Bigl(\dfrac{\partial F_i(t,\mathbf{x})}{\partial(\ln x_j)}\Bigr)_{n\times n}\bigl|_{\mathbf{x}=e^{\boldsymbol{\Psi}^{\star}_{\epsilon}(t)}}\mathbf{Y}_{\epsilon}(t)dt+\sqrt{\epsilon}(g_{ij}(t,e^{\boldsymbol{\Psi}^{\star}_{\epsilon}(t)}))_{n\times N}d\mathbf{W}(t)\\
\ \ \ \ \ \ \ \ \ :=D^{\star}(t)\mathbf{Y}_{\epsilon}(t)dt+\sqrt{\epsilon}\Lambda^{\star}(t)\mathbf{W}(t),\\
\mathbf{Y}_{\epsilon}(0)=\ln\mathbf{x}_0-\boldsymbol{\Psi}^{\star}_{\epsilon}(0).
\end{cases}
\end{equation}
For simplicity, we define $$\Upsilon(t)=\int_{0}^{t}\big(\boldsymbol{\Phi}_{D^{\star}}(t)\boldsymbol{\Phi}^{-1}_{D^{\star}}(s)\Lambda^{\star}(s)\big)\big(\boldsymbol{\Phi}_{D^{\star}}(t)\boldsymbol{\Phi}^{-1}_{D^{\star}}(s)\Lambda^{\star}(s)\big)^{\top}ds.$$
Obviously, $\Upsilon(t)\succeq\mathbb{O}$, $\forall\ t\ge 0$. Analogous to (\ref{3.1}), denote by $\lambda_k^{\oplus}\ (k\in\mathbb{S}_{\omega}^0)$ all positive eigenvalues of $\Upsilon(\theta)$, where
$\omega=\text{rank}(\Upsilon(\theta))$. Then we can determine a set $\{\mu_1,...,\mu_{\omega}\}:=\boldsymbol{\mu}\subseteq\mathbb{S}_n^0$ and an orthogonal matrix $\mathcal{H}_{\epsilon}$ satisfying
\begin{equation}\label{4.3}
\mathcal{H}_{\epsilon}\Upsilon(\theta)\mathcal{H}_{\epsilon}^{\top}=\sum_{k=1}^{\omega}\lambda_k^{\oplus}\amalg_{n,\mu_k},
\end{equation}
where $\mu_j>\mu_i$, $\forall\ j>i$.

By a standard argument in (\ref{3.3}) and (\ref{3.6})-(\ref{3.10}), we obtain that system (\ref{4.2}) admits a unique SPS $(\mathbf{Y}_{\epsilon,\theta}(t))$ which follows the distribution $\mathbb{N}_n(\mathbf{0},\Sigma^d_{\epsilon}(t))$, where  
\begin{equation*}
\Sigma^d_{\epsilon}(t)=\mathbb{V}\text{ar}(\mathbf{Y}_{\epsilon,\theta}(t))=\boldsymbol{\Phi}_{D^{\star}}(t)\Sigma^d_{\epsilon}(0)\boldsymbol{\Phi}_{D^{\star}}^{\top}(t)+\epsilon\Upsilon(t),
\end{equation*}
and $\Sigma^d_{\epsilon}(0)$ is determined by
\begin{equation}\label{4.4}
\Im_d\big(\Sigma^d_{\epsilon}(0),\boldsymbol{\Phi}_{D^{\star}}(\theta),\epsilon\Upsilon(\theta)\big)=\mathbb{O}.
\end{equation}
In view of the relationship between (\ref{4.1}) near the solution $\boldsymbol{\Psi}^{\star}_{\epsilon}(t)$ and (\ref{4.2}), the process $\ln\mathbf{X}_{\epsilon}(t)$ near $\boldsymbol{\Psi}^{\star}_{\epsilon}(t)$ can be approximated by $\mathbf{Y}_{\epsilon}(t)+\boldsymbol{\Psi}^{\star}_{\epsilon}(t)$. This implies that system (\ref{1.2}) approximately has a local periodic solution which follows the distribution $\mathbb{LN}_n(\boldsymbol{\Psi}^{\star}_{\epsilon}(t),\Sigma^d_{\epsilon}(t))$.

To study the positive definiteness of $\Sigma^d_{\epsilon}(t)$, a natural approach is to obtain the specific form of $\Sigma^d_{\epsilon}(0)$. To this end, let $$A_{[3]}=\mathcal{H}_{\epsilon}\boldsymbol{\Phi}_{D^{\star}}(\theta)\mathcal{H}_{\epsilon}^{-1}.$$
Under Assumption \ref{2;2}(3), we have $A_{[3]}\in\overline{\mathbf{CM}}(n)$. Consider the following auxiliary Lyapunov equations
\begin{equation}\label{4.5}
\Im_d(\Sigma_{[3]\mu_k,\epsilon},A_{[3]},\amalg_{n,\mu_k})=\mathbb{O},\ \ \ \forall\ k\in\mathbb{S}_{\omega}^0.
\end{equation}
Below we need to study the minimal rank of the column components of $\Sigma_{[3]\mu_k,\epsilon}$.
\begin{theorem}\label{4-1}
	Let Assumption \ref{2;2} hold. For sufficiently small $\epsilon$, system (\ref{1.2}) approximately admits a local periodic solution with the distribution $\mathbb{LN}_n(\boldsymbol{\Psi}^{\star}_{\epsilon}(t),\Sigma^d_{\epsilon}(t))$, where
	$\Sigma^d_{\epsilon}(t)=\boldsymbol{\Phi}_{D^{\star}}(t)\Sigma^d_{\epsilon}(0)\boldsymbol{\Phi}_{D^{\star}}^{\top}(t)+\epsilon\Upsilon(t)$, and
	\begin{equation}\label{4.6}
	\Sigma^d_{\epsilon}(0)=\epsilon\mathcal{H}_{\epsilon}^{\top}\Bigl(\sum_{k=1}^{\omega}\lambda_k^{\oplus}\Sigma_{[3]\mu_k,\epsilon}\Bigr)\mathcal{H}_{\epsilon},
	\end{equation}
	with $\Sigma_{[3]\mu_k,\epsilon}$ shown in Algorithm \ref{3}. Moreover, for any constant vector $\mathbf{X}\in\mathbb{R}^n$, let $\mathbf{Y}=\mathcal{H}_{\epsilon}\mathbf{X}$ and $$\mathbf{H}_{[3]\mu_k,j}=\Big(\sum_{i=0}^{j-1}\big(Q_{[3]\mu_k,i}^{-1}\big)^{\top}P_{[3]\mu_k,i}\Big)J_{[3]\mu_k}\mathbf{Y},$$
	one has
	\begin{equation}\label{4.7}
	\mathbf{X}^{\top}\Sigma^d_{\epsilon}(0)\mathbf{X}\ge\varrho_{\epsilon}\sum_{k=1}^{\omega}\Bigl(Y_{\mu_k}^2+\sum_{j=2}^{\delta_k}(\mathbf{H}_{[3]\mu_k,j}^{(j)})^2\Bigr),
	\end{equation}
	where $\varrho_{\epsilon}>0$, $\delta_k,\ J_{[3]\mu_k},\ P_{[3]\mu_k,i}$ and $Q_{[3]\mu_k,i}\ (\forall\ i\in\mathbb{S}_{\delta_k}^0)$ are given in Algorithm \ref{3}.
	\\
	\begin{threeparttable}
		\begin{algorithm}[H]\label{3}
			\caption{Algorithm for obtaining $\Sigma_{[3]\mu_k,\epsilon}\ (k\in\mathbb{S}_{\omega}^0)$}
			\LinesNumbered
			\KwIn{\rm{$A_{[3]}$, $\boldsymbol{\mu}$.}}
			\KwOut{\rm{$\delta_k$, $\Sigma_{[3]\mu_k,\epsilon}=(\prod_{i=0}^{\delta_k-1}\overline{a}_{\varsigma_{k(i)},i}^{\lfloor[3]\mu_k,i\rfloor})^2$ $\times[M_{[3]\mu_k,\delta_k}(\prod_{i=0}^{\delta_k-1}Q_{[3]\mu_k,i}P_{[3]\mu_k,i})J_{[3]\mu_k}]^{-1}\Delta_{[3]\mu_k,\delta_k} \{[M_{[3]\mu_k,\delta_k}(\prod_{i=0}^{\delta_k-1}Q_{[3]\mu_k,i}P_{[3]\mu_k,i})J_{[3]\mu_k}]^{-1}\}^{\top}$\tnote{\textcolor{blue}{$\sharp$}}.}
			}			
			\rm{\textbf{(Initialization):} $A_{[3]\mu_k,1}=J_{[3]\mu_k}A_{[3]}J_{[3]\mu_k}^{-1}$, $\delta_k=1$\;}
			\For{$i=1:n-1$}{
				\eIf{$\sum_{j=i+1}^{n}(\overline{a}_{ji}^{\lfloor[3]\mu_k,i\rfloor})^2=0$}
				{
					$\delta_k=i$\;
					\rm{\textbf{break}}\;
				}
				{
					Choose a ``\underline{\textbf{suitable}} \tnote{\textcolor{blue}{$\ddag$}}" $\varsigma_{k(i)}\in\mathbb{S}_n^i$ such that $\overline{a}_{\varsigma_{k(i)},i}^{\lfloor[3]\mu_k,i\rfloor}\neq 0$. Then, $\widehat{A}_{[3]\mu_k,i}=P_{[3]\mu_k,i}\overline{A}_{[3]\mu_k,i}P_{[3]\mu_k,i}^{-1}$ and $\overline{A}_{[3]\mu_k,i+1}:=Q_{[3]\mu_k,i}\widehat{A}_{[3]\mu_k,i}Q_{[3]\mu_k,i}^{-1}$\;
				}
				$\delta_k$++;
			}
			Obtain a standard $\mathbb{L}_0$-algebraic equation $\Im_d\bigl(\Xi_{[3]\mu_k,\delta_k},A_{[3]s,\mu_k}^{(\delta_k)},\amalg_{\delta_k,1}\bigr)=\mathbb{O}$, where $A_{[3]s,\mu_k}=M_{[3]\mu_k,\delta_k}\overline{A}_{[3]\mu_k,\delta_k}M_{[3]\mu_k,\delta_k}^{-1}$\;
			\rm{\textbf{return}} $\delta_k,\ \varrho_{\epsilon},\ \Xi_{[3]\mu_k,\delta_k},\ \Sigma_{[3]\mu_k,\epsilon}.$
		\end{algorithm}
		\begin{tablenotes}
			\footnotesize
			\item[\rm{\textcolor{blue}{$\sharp$}}] \rm{We stipulate $P_{[3]\mu_k,l}=Q_{[3]\mu_k,l}=\mathbf{I}_n$ and $\overline{a}_{\varsigma_{k(0)},0}^{\lfloor[3]\mu_k,0\rfloor}=1$ for any $l\in\{0,n-1\}$ and $k\in\mathbb{S}_{\omega}^0$. Moreover, $J_{[3]\mu_k},\ P_{[3]\mu_k,i}$, $Q_{[3]\mu_k,i}$ and $\mathcal{M}_{[3]\delta_k}$ have the same form as $J_{[1]\phi_k},\ P_{[1]\phi_k,i}$, $ Q_{[1]\phi_k,i}$ and $\mathcal{M}_{[3]\delta_k}$ by replacing $(\phi_k,\eta_k,\nu_{k(i)},\boldsymbol{\ell}_{[1]k,n-1-i},\overline{A}_{[1]\phi_k,\eta_k})$ with $(\mu_k,\delta_{k(i)},\varsigma_{k(i)},\boldsymbol{\ell}_{[3]k,n-1-i},\overline{A}_{[3]\mu_k,\delta_k})$, where $\boldsymbol{\ell}_{[3]k,n-1-i}=\frac{-1}{\widehat{a}_{i+1,i}^{\lfloor[3]\overline{\phi}_k,i\rfloor}}(\widehat{a}_{i+2,i}^{\lfloor[3]\overline{\phi}_k,i\rfloor},...,\widehat{a}_{n,i}^{\lfloor[3]\overline{\phi}_k,i\rfloor})^{\top}$, and
			\begin{equation*}
			M_{[3]\mu_k,\delta_k}=\left(\begin{array}{cc}
			\mathcal{M}_{[3]\delta_k} & \mathbb{O} \\ 
			\mathbb{O} & \mathbf{I}_{n-\delta_k}
			\end{array}\right),\ \ \ \Delta_{[3]\mu_k,\delta_k}=\left(\begin{array}{cc}
			\Xi_{[3]\mu_k,\delta_k} & \mathbb{O} \\ 
			\mathbb{O} & \mathbb{O} 
			\end{array}\right),\ \ \ \varrho_{\epsilon}=\epsilon\min_{k\in\mathbb{S}_{\omega}^0}\bigg\{\widehat{\lambda}_k^{\diamond}\lambda_k^{\oplus}\Bigl(\prod_{i=0}^{\delta_k-1}\overline{a}_{\varsigma_{k(i)},i}^{\lfloor[3]\phi_k,i\rfloor}\Bigr)^2\bigg\},
			\end{equation*}
			with $\widehat{\lambda}_k^{\diamond}$ being the minimal eigenvalue of $\mathcal{M}_{[3]\delta_k}^{-1}\Xi_{[3]\mu_k,\delta_k}(\mathcal{M}_{[3]\delta_k}^{-1})^{\top}$.}
			\item[\rm{\textcolor{blue}{$\ddag$}}] \rm{The choice of $\varsigma_{k(i)}$ is conducive to verifying $\Sigma^d_{\epsilon}(0)\succ\mathbb{O}$.}
		\end{tablenotes}
	\end{threeparttable}
\end{theorem}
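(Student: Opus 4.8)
The plan is to mirror the three-stage argument of Theorem \ref{3-1}, now carried out on the log-transformed linear system (\ref{4.2}). First I would pin down the distribution of $\mathbf{Y}_{\epsilon}(t)$ and the algebraic identity that forces $\theta$-periodicity; second, conjugate (\ref{4.4}) by $\mathcal{H}_{\epsilon}$, invoke the spectral decomposition (\ref{4.3}) and the superposition principle to split $\Sigma^d_{\epsilon}(0)$ into the pieces $\Sigma_{[3]\mu_k,\epsilon}$ of (\ref{4.5}), yielding (\ref{4.6}); third, run Algorithm \ref{3} on each equation in (\ref{4.5}) --- exactly as Algorithm \ref{1} was run in Step 1 of the proof of Theorem \ref{3-1} --- to obtain the closed form of $\Sigma_{[3]\mu_k,\epsilon}$ together with the pointwise lower bound (\ref{4.7}).

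For the first stage, since $\boldsymbol{\Phi}_{D^{\star}}(\cdot)$ is the fundamental matrix of $\dot{\mathbf{u}}=D^{\star}(t)\mathbf{u}$, the solution of (\ref{4.2}) is $\mathbf{Y}_{\epsilon}(t)=\boldsymbol{\Phi}_{D^{\star}}(t)\mathbf{Y}_{\epsilon}(0)+\sqrt{\epsilon}\int_{0}^{t}\boldsymbol{\Phi}_{D^{\star}}(t)\boldsymbol{\Phi}_{D^{\star}}^{-1}(s)\Lambda^{\star}(s)\,d\mathbf{W}(s)$, whence $\mathbf{Y}_{\epsilon}(t)\sim\mathbb{N}_n(\boldsymbol{\varphi}(t),\Sigma^d_{\epsilon}(t))$ with $\boldsymbol{\varphi}(t)=\boldsymbol{\Phi}_{D^{\star}}(t)\boldsymbol{\varphi}(0)$ and $\Sigma^d_{\epsilon}(t)=\boldsymbol{\Phi}_{D^{\star}}(t)\Sigma^d_{\epsilon}(0)\boldsymbol{\Phi}_{D^{\star}}^{\top}(t)+\epsilon\Upsilon(t)$. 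Imposing $\boldsymbol{\varphi}(t+\theta)=\boldsymbol{\varphi}(t)$ and $\Sigma^d_{\epsilon}(t+\theta)=\Sigma^d_{\epsilon}(t)$: under Assumption \ref{2;2}(3) the value $1$ is not an eigenvalue of $\boldsymbol{\Phi}_{D^{\star}}(\theta)\in\overline{\mathbf{CM}}(n)$, so $\boldsymbol{\Phi}_{D^{\star}}(\theta)-\mathbf{I}_n$ is invertible, forcing $\boldsymbol{\varphi}(0)=\mathbf{0}$, and Lemma \ref{2,1} gives a unique symmetric $\Sigma^d_{\epsilon}(0)$ solving (\ref{4.4}). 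That this $\Sigma^d_{\epsilon}(0)$ actually makes $\Sigma^d_{\epsilon}(t+\theta)=\Sigma^d_{\epsilon}(t)$ for every $t\ge 0$ follows from the same telescoping computation as (\ref{3.10}), using $\boldsymbol{\Phi}_{D^{\star}}(t+\theta)=\boldsymbol{\Phi}_{D^{\star}}(t)\boldsymbol{\Phi}_{D^{\star}}(\theta)$ together with the $\theta$-periodicity of $\Lambda^{\star}$. Transferring back via the relation between (\ref{4.1}) linearized near $\boldsymbol{\Psi}^{\star}_{\epsilon}(t)$ and (\ref{4.2}), the process $\ln\mathbf{X}_{\epsilon}(t)$ near $\boldsymbol{\Psi}^{\star}_{\epsilon}(t)$ is approximated by $\mathbf{Y}_{\epsilon}(t)+\boldsymbol{\Psi}^{\star}_{\epsilon}(t)$, so (\ref{1.2}) approximately admits a local periodic solution with distribution $\mathbb{LN}_n(\boldsymbol{\Psi}^{\star}_{\epsilon}(t),\Sigma^d_{\epsilon}(t))$.

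For the second and third stages, conjugating (\ref{4.4}) by $\mathcal{H}_{\epsilon}$ and using (\ref{4.3}) turns it into $\Im_d\bigl(\tfrac{1}{\epsilon}\mathcal{H}_{\epsilon}\Sigma^d_{\epsilon}(0)\mathcal{H}_{\epsilon}^{\top},A_{[3]},\sum_{k=1}^{\omega}\lambda_k^{\oplus}\amalg_{n,\mu_k}\bigr)=\mathbb{O}$; the superposition principle with (\ref{4.5}) then gives $\mathcal{H}_{\epsilon}\Sigma^d_{\epsilon}(0)\mathcal{H}_{\epsilon}^{\top}=\epsilon\sum_{k=1}^{\omega}\lambda_k^{\oplus}\Sigma_{[3]\mu_k,\epsilon}$, which is (\ref{4.6}) by orthogonality of $\mathcal{H}_{\epsilon}$. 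For each fixed $k$ I would apply Algorithm \ref{3} to (\ref{4.5}) verbatim as in Step 1 of the proof of Theorem \ref{3-1}: the order transformation $J_{[3]\mu_k}$ sends $\amalg_{n,\mu_k}$ to $\amalg_{n,1}$; the rotation and elimination transformations $P_{[3]\mu_k,i},Q_{[3]\mu_k,i}$ build an upper $\mathbf{CM}$--Hessenberg leading $\delta_k\times\delta_k$ block; the standardizing transformation $M_{[3]\mu_k,\delta_k}$, via Proposition \ref{2&2}, moves it into $\mathscr{T}(\delta_k)$; and the off-block remainder collapses to zero by the same $\overline{\mathbf{CM}}$ geometric-series argument used in (\ref{3.23})--(\ref{3.28}) (uniqueness from Lemma \ref{2,1}, together with $|A_{[3]s,\mu_k}^{(\delta_k)}-\mathbf{I}_{\delta_k}|\neq 0$). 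Proposition \ref{2&1} then supplies $\Xi_{[3]\mu_k,\delta_k}\succ\mathbb{O}$ in its explicit Toeplitz-type form, hence the closed expression for $\Sigma_{[3]\mu_k,\epsilon}$ in Algorithm \ref{3}, and in particular $\Sigma^d_{\epsilon}(0)\succeq\mathbb{O}$. The lower bound (\ref{4.7}) then follows by combining $\Xi_{[3]\mu_k,\delta_k}\succ\mathbb{O}$, the minimal eigenvalue $\widehat{\lambda}_k^{\diamond}$ of $\mathcal{M}_{[3]\delta_k}^{-1}\Xi_{[3]\mu_k,\delta_k}(\mathcal{M}_{[3]\delta_k}^{-1})^{\top}$, and the invariance identity $\mathbf{H}_{[3]\mu_k,l}^{\langle j\rangle}=\mathbf{H}_{[3]\mu_k,j}^{\langle j\rangle}$ for $j\le l\le\delta_k$ (the analog of (\ref{3.32})), with $\mathbf{H}_{[3]\mu_k,1}^{(1)}=Y_{\mu_k}$ and the definition of $\varrho_{\epsilon}$.

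I expect the only genuinely non-routine point --- as opposed to a transcription of the PNOA proof --- to be the $\epsilon$-dependence of $D^{\star}(t)$ and $\Lambda^{\star}(t)$: both now involve the drift $F_i(t,\mathbf{x})=f_i(t,\mathbf{x})/x_i-\tfrac{\epsilon}{2}\sum_j g_{ij}^2(t,\mathbf{x})$ carrying the It\^{o} correction, which is why Assumption \ref{2;2}(1) (existence of the $\theta$-periodic solution $\boldsymbol{\Psi}^{\star}_{\epsilon}(t)$ of (\ref{2.4})) replaces Assumption \ref{2;1}(a). One must therefore keep $\epsilon\in(0,\epsilon_1)$ throughout, so that Assumption \ref{2;2}(3) guarantees $\boldsymbol{\Phi}_{D^{\star}}(\theta)\in\overline{\mathbf{CM}}(n)$ and hence $A_{[3]}\in\overline{\mathbf{CM}}(n)$, keeping every $\overline{\mathbf{CM}}$-based geometric-series identity valid; the resulting $\epsilon$-dependence of $\mathcal{H}_{\epsilon},\omega,\lambda_k^{\oplus},\mu_k,\delta_k,\varrho_{\epsilon}$ is harmless since $\epsilon$ is held fixed. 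Beyond that the main obstacle is purely bookkeeping: the ``suitable'' choice of $\varsigma_{k(i)}\in\mathbb{S}_n^i$ in Algorithm \ref{3} must obey the rules of \cite[Remark 9, Rules 1-3]{S39} so that strict positivity propagates through the elimination, which is exactly what is needed to promote $\Sigma^d_{\epsilon}(0)\succeq\mathbb{O}$ to $\Sigma^d_{\epsilon}(0)\succ\mathbb{O}$ in the degenerate cases where, e.g., $\omega=n$ or some $\delta_k=n$.
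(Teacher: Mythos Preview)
Your proposal is correct and follows essentially the same approach as the paper. The only organizational difference is that the paper has already carried out your first stage (the Gaussian law of $\mathbf{Y}_{\epsilon}(t)$, the periodicity reduction to (\ref{4.4}), and the passage to $\mathbb{LN}_n$) in the text preceding the statement of Theorem \ref{4-1}, so its formal proof begins directly with your second stage --- conjugation by $\mathcal{H}_{\epsilon}$, (\ref{4.3}), superposition, and orthogonality to get (\ref{4.6}) --- and then executes Algorithm \ref{3} by explicit reference to ``mimicking the proof of Theorem \ref{3-1} (mainly, (\ref{3.12})--(\ref{3.29}))'', exactly as you outline, including the block collapse via the $\overline{\mathbf{CM}}$ argument, Propositions \ref{2&1}--\ref{2&2}, and the invariance $\mathbf{H}_{[3]\mu_k,j}^{\langle l\rangle}=\mathbf{H}_{[3]\mu_k,l}^{\langle l\rangle}$ together with $\mathbf{H}_{[3]\mu_k,1}^{(1)}=Y_{\mu_k}$ to reach (\ref{4.7}).
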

\begin{proof}
	Using (\ref{4.3})-(\ref{4.5}), the superposition principle and the orthogonality of $\mathcal{H}_{\epsilon}$, we obtain (\ref{4.6}).

	Next by proceeding the procedures 1-10 in Algorithm \ref{3} and mimicking the proof of Theorem \ref{3-1} (mainly, (\ref{3.12})-(\ref{3.29})), we can determine that (i) $\overline{A}_{[3]\mu_k,\delta_k}^{(\delta_k)}\in\mathcal{U}_{cm}(\delta_k)$, (ii) $\delta_k$ is the minimal rank of all column components of $\Sigma_{[3]\mu_k,\epsilon}$, $\forall\ k\in\mathbb{S}_{\omega}^0$, and (iii) 
	\begin{equation*}
	\widetilde{\Sigma}_{[3]\mu_k,\epsilon}=\left(\begin{array}{cc}
	\widetilde{\Sigma}_{[3]\mu_k,\epsilon}^{(\delta_k)} & \mathbb{O} \\ 
	\mathbb{O} & \mathbb{O}
	\end{array}\right).
	\end{equation*} 
	where
	\begin{equation}\label{4.8}
	\widetilde{\Sigma}_{[3]\mu_k,\epsilon}:=\bigg(\Bigl(\prod_{i=0}^{\delta_k-1}Q_{[3]\mu_k,i}P_{[3]\mu_k,i}\Bigr)J_{[3]\mu_k}\bigg)\Sigma_{[3]\mu_k,\epsilon}\bigg(\Bigl(\prod_{i=0}^{\delta_k-1}Q_{[3]\mu_k,i}P_{[3]\mu_k,i}\Bigr)J_{[3]\mu_k}\bigg)^{\top}.
	\end{equation}
	Combined with Proposition \ref{2&2} and the definition of $M_{[3]\mu_k,\delta_k}$, we have $A_{[3]s,\mu_k}^{(\delta_k)}\in\mathscr{T}(\delta_k)$ and 
	\begin{equation*}
	M_{[3]\mu_k,\delta_k}\amalg_{n,\mu_k}M_{[3]\mu_k,\delta_k}^{\top}=\left(\begin{array}{cc}
	\Bigl(\prod\limits_{i=0}^{\delta_k-1}\overline{a}_{\varsigma_{k(i)},i}^{\lfloor[3]\mu_k,i\rfloor}\Bigr)^2\amalg_{\delta_k,1} & \mathbb{O} \\ 
	\mathbb{O} & \mathbb{O}
	\end{array}\right).
	\end{equation*}
	Thus, Eqs. (\ref{4.5}) can be equivalent to a standard $\mathbb{L}_0$-algebraic equation
	\begin{equation*}
	\Im_d\biggl(\Bigl(\prod\limits_{i=0}^{\delta_k-1}\overline{a}_{\varsigma_{k(i)},i}^{\lfloor[3]\mu_k,i\rfloor}\Bigr)^{-2}M_{[3]\mu_k,\delta_k}\widetilde{\Sigma}_{[3]\mu_k,\epsilon}M_{[3]\mu_k,\delta_k}^{\top},A_{[3]s,\mu_k}^{(\delta_k)},\amalg_{\delta_k,1}\biggr)=\mathbb{O}.
	\end{equation*}
	This together with (\ref{4.8}), Proposition \ref{2&1} and the procedure 11 of Algorithm \ref{3} implies
	\begin{align}\label{4.9}
	\Sigma_{[3]\mu_k,\epsilon}=&\bigg(M_{[3]\mu_k,\delta_k}\Bigl(\prod_{i=0}^{\delta_k-1}Q_{[3]\mu_k,i}P_{[3]\mu_k,i}\Bigr)J_{[3]\mu_k}\bigg)^{-1}\left(\begin{array}{cc}
	\mathcal{M}_{[3]\delta_k}\widetilde{\Sigma}_{[3]\mu_k}^{(\delta_k)}\mathcal{M}_{[3]\delta_k}^{\top} & \mathbb{O} \\ 
	\mathbb{O} & \mathbb{O}
	\end{array}\right)\notag\\
	&\times\bigg[\bigg(M_{[3]\mu_k,\delta_k}\Bigl(\prod_{i=0}^{\delta_k-1}Q_{[3]\mu_k,i}P_{[3]\mu_k,i}\Bigr)J_{[3]\mu_k}\bigg)^{-1}\bigg]^{\top}\notag\\
	=&\Bigl(\prod\limits_{i=0}^{\delta_k-1}\overline{a}_{\varsigma_{k(i)},i}^{\lfloor[3]\mu_k,i\rfloor}\Bigr)^2\bigg(M_{[3]\mu_k,\delta_k}\Bigl(\prod_{i=0}^{\delta_k-1}Q_{[3]\mu_k,i}P_{[3]\mu_k,i}\Bigr)J_{[3]\mu_k}\bigg)^{-1}\notag\\
	&\times\Delta_{[1]\phi_k,\eta_k} \bigg[\bigg(M_{[3]\mu_k,\delta_k}\Bigl(\prod_{i=0}^{\delta_k-1}Q_{[3]\mu_k,i}P_{[3]\mu_k,i}\Bigr)J_{[3]\mu_k}\bigg)^{-1}\bigg]^{\top}.
	\end{align}
	Then by (\ref{4.9}) and the definitions of $\mathbf{H}_{[3]\mu_k,i}$ and $\varrho_{\epsilon}$, we obtain 
	\begin{align}\label{4.10}
	\mathbf{X}^{\top}\Sigma^d_{\epsilon}(0)\mathbf{X}=&\epsilon\mathbf{Y}^{\top}\Bigl(\sum_{k=1}^{\omega}\lambda_k^{\oplus}\Sigma_{[3]\mu_k,\epsilon}\Bigr)\mathbf{Y}\notag\\
	\ge&\epsilon\min_{k\in\mathbb{S}_{\omega}^0}\bigg\{\lambda_k^{\oplus}\Bigl(\prod\limits_{i=0}^{\delta_k-1}\overline{a}_{\varsigma_{k(i)},i}^{\lfloor[3]\mu_k,i\rfloor}\Bigr)^2\bigg\}\sum_{k=1}^{\omega}\Biggl\{\biggl[\biggl(\Bigl(\Bigl(\prod_{i=0}^{\delta_k-1}Q_{[3]\mu_k,i}P_{[3]\mu_k,i}\Bigr)J_{[3]\mu_k}\Bigr)^{-1}\biggr)^{\top}\mathbf{Y}\biggr]^{\top}\notag\\
	&\times\left(\begin{array}{cc}
	\mathcal{M}_{[3]\delta_k}^{-1}\Xi_{[3]\mu_k,\delta_k}(\mathcal{M}_{[3]\delta_k}^{-1})^{\top} & \mathbb{O} \\ 
	\mathbb{O} & \mathbb{O} 
	\end{array}\right)\biggl[\biggl(\Bigl(\Bigl(\prod_{i=0}^{\delta_k-1}Q_{[3]\mu_k,i}P_{[3]\mu_k,i}\Bigr)J_{[3]\mu_k}\Bigr)^{-1}\biggr)^{\top}\mathbf{Y}\biggr]\Biggr\}\notag\\
	\ge&\epsilon\min_{k\in\mathbb{S}_{\omega}^0}\bigg\{\lambda_k^{\oplus}\Bigl(\prod\limits_{i=0}^{\delta_k-1}\overline{a}_{\varsigma_{k(i)},i}^{\lfloor[3]\mu_k,i\rfloor}\Bigr)^2\bigg\}\sum_{k=1}^{\omega}\widehat{\lambda}_k^{\diamond}(\mathbf{H}_{[3]\mu_k,\delta_k}^{\langle\delta_k\rangle})^{\top}\mathbf{H}_{[3]\mu_k,\delta_k}^{\langle\delta_k\rangle}\notag\\
	\ge&\varrho_{\epsilon}\sum_{k=1}^{\omega}\sum_{i=1}^{\delta_k}(\mathbf{H}_{[3]\mu_k,i}^{(i)})^2.
	\end{align}
	In the display above, we have used
	\begin{equation*}
	\biggl(\Bigl(\Bigl(\prod_{i=0}^{\delta_k-1}Q_{[3]\mu_k,i}P_{[3]\mu_k,i}\Bigr)J_{[3]\mu_k}\Bigr)^{-1}\biggr)^{\top}\mathbf{Y}=\mathbf{H}_{[3]\mu_k,\delta_k},\ \ \text{and}\ \ \mathbf{H}_{[3]\mu_k,j}^{\langle l\rangle}=\mathbf{H}_{[3]\mu_k,l}^{\langle l\rangle},\ \ \ \forall\ l\in\mathbb{S}_{\delta_k}^0;j\in\mathbb{S}_{\delta_k}^l.
	\end{equation*}
	Then the desired result (\ref{4.7}) follows from (\ref{4.10}) and $\mathbf{H}_{[3]\mu_k,1}^{(1)}=Y_{\mu_k}$, $\forall\ k\in\mathbb{S}_{\omega}^0$. This completes the proof.
\end{proof}
It should be mentioned that although a criterion (\ref{4.7}) is provided for verifying $\Sigma^d_{\epsilon}(t)\succ\mathbb{O}$, the relevant analysis may be laborious if $\Upsilon(\theta)$ is highly complex and $\boldsymbol{\Phi}_{D^{\star}}(\theta)$ is ``simple" in the sense that approaching the canonical form $\mathscr{T}(\cdot)$. In this case, we introduce another available criterion to simplify calculations.

Under Assumption \ref{2;2}, there exists two constants $\overline{\omega}\in\mathbb{S}_{\omega}^0$, $\lambda_{\vartriangle}^{\oplus}>0$, and a set $\{\overline{\mu}_1,...,\overline{\mu}_{\overline{\omega}}\}:=\overline{\boldsymbol{\mu}}\subseteq\mathbb{S}_n^0$ (including $\overline{\boldsymbol{\mu}}=\emptyset$) such that
\begin{equation}
\Upsilon(\theta)\succeq\lambda_{\vartriangle}^{\oplus}\sum_{k=1}^{\overline{\omega}}\amalg_{n,\overline{\mu}_k},
\end{equation}
where $\overline{\mu}_i<\overline{\mu}_j$, $\forall\ i<j$.
\begin{theorem}\label{4-2}
	Under Assumption \ref{2;2}, the following assertion holds: 
	\begin{equation}
	\mathbf{X}^{\top}\Sigma^d_{\epsilon}(0)\mathbf{X}\ge\overline{\varrho}_{\epsilon}\sum_{k=1}^{\overline{\omega}}\Bigl(X_{\overline{\mu}_k}^2+\sum_{j=2}^{\overline{\delta}_k}(\mathbf{H}_{[4]\overline{\mu}_k,j}^{(j)})^2\Bigr),
	\end{equation}
	where $\mathbf{X}$ is the same as in (\ref{3.5}), and $$\mathbf{H}_{[4]\overline{\mu}_k,j}=\Big(\sum_{i=0}^{j-1}(Q_{[4]\overline{\mu}_k,i}^{-1})^{\top}P_{[4]\overline{\mu}_k,i}\Big)J_{[4]\overline{\mu}_k}\mathbf{X},\ \ \ \forall\ j\in\mathbb{S}_{\overline{\delta}_k}^0,$$
	with $\overline{\delta}_k,\ J_{[4]\overline{\mu}_k},\ P_{[4]\overline{\mu}_k,i}$ and $Q_{[4]\overline{\mu}_k,i}$ defined in Algorithm \ref{4}. Moreover, $\overline{\varrho}_{\epsilon}\propto\epsilon\lambda_{\vartriangle}^{\oplus}\min_{k\in\mathbb{S}_{\overline{\omega}}^0}\big\{(\prod_{i=0}^{\overline{\delta}_k-1}\overline{a}_{\overline{\varsigma}_{k(i)},i}^{\lfloor[4]\overline{\mu}_k,i\rfloor})^2\big\}$. 
	\\
	\begin{threeparttable}
		\begin{algorithm}[H]\label{4}
			\caption{Algorithm for obtaining $\mathbf{H}_{[4]\overline{\mu}_k,j}\ (j\in\mathbb{S}_{\overline{\delta}_k}^0)$}
			\LinesNumbered
			\KwIn{\rm{$\boldsymbol{\Phi}_{D^{\star}}(\theta)$, $\overline{\boldsymbol{\mu}}$.}}
			\KwOut{\rm{$\mathbf{H}_{[4]\overline{\mu}_k,j}$\tnote{\textcolor{blue}{$\ddag$}}.}
			}			
			\rm{\textbf{(Initialization):} $\overline{A}_{[4]\overline{\mu}_k,1}=J_{[4]\overline{\mu}_k}\boldsymbol{\Phi}_{D^{\star}}(\theta)J_{[4]\overline{\mu}_k}^{-1}$, $\overline{\delta}_k:=1$\;}
			\rm{\textbf{(Recursive framework):} Construct a FOR loop similar to Algorithm \ref{3} to determine $\overline{\delta}_k$ and some $\overline{\varsigma}_{k(i)}\in\mathbb{S}_n^i\ (i\in\mathbb{S}_{\overline{\delta}_k-1}^0)$, which ensures
			\\
			\ \ \ \ \ \ \ \ \ \ \ \ \ \ \ \ \ \ $\text{(4-i)}\ \overline{a}_{\overline{\varsigma}_{k(i)},i}^{\lfloor[4]\overline{\mu}_k,i\rfloor}\neq 0,\ \ \forall\ i\in\mathbb{S}_{\overline{\delta}_k-1}^0,\ \ \ \text{(4-ii)}\ \overline{a}_{j,\overline{\delta}_k}^{\lfloor[4]\overline{\mu}_k,\overline{\delta}_k\rfloor}=0,\ \forall\ j\in\mathbb{S}_n^{\overline{\delta}_k+1}$\tnote{\textcolor{blue}{$\ddag$}},\\
			where each $\overline{a}_{ji}^{\lfloor[4]\overline{\mu}_k,i\rfloor}$ is obtained by the iterative scheme:
			\\
			\ \ \ \ \ \ \ \ \ \ \ \ \ \ \ \ \ \ $\widehat{A}_{[4]\overline{\mu}_k,i}:=P_{[4]\overline{\mu}_k,i}\overline{A}_{[4]\overline{\mu}_k,i}P_{[4]\overline{\mu}_k,i}^{-1}$\tnote{\textcolor{blue}{$\ddag$}},\ \ $\overline{A}_{[4]\overline{\mu}_k,i+1}:=Q_{[4]\overline{\mu}_k,i}\widehat{A}_{[4]\overline{\mu}_k,i}Q_{[4]\overline{\mu}_k,i}^{-1}$}\;
			\rm{\textbf{return}} $\overline{\delta}_k$, $J_{[4]\overline{\mu}_k}$, $P_{[4]\overline{\mu}_k,i}$, $Q_{[4]\overline{\mu}_k,i}.$
		\end{algorithm}
		\begin{tablenotes}
			\footnotesize
			\item[\rm{\textcolor{blue}{$\ddag$}}] \rm{$J_{[4]\overline{\mu}_k},\ P_{[4]\overline{\mu}_k,i}$ and $ Q_{[4]\overline{\mu}_k,i}$ have the same form as $J_{[1]\phi_k},\ P_{[1]\phi_k,i}$ and $ Q_{[1]\phi_k,i}$ by replacing $(\phi_k,\nu_{k(i)},\boldsymbol{\ell}_{[1]k,n-1-i})$ with $(\overline{\mu}_k,\overline{\varsigma}_{k(i)},\boldsymbol{\ell}_{[4]k,n-1-i})$, where $\boldsymbol{\ell}_{[4]k,n-1-i}=\frac{-1}{\widehat{a}_{i+1,i}^{\lfloor[4]\overline{\mu}_k,i\rfloor}}(\widehat{a}_{i+2,i}^{\lfloor[4]\overline{\mu}_k,i\rfloor},...,\widehat{a}_{n,i}^{\lfloor[4]\overline{\mu}_k,i\rfloor})^{\top}$. Similarly, we stipulate $\overline{a}_{\overline{\varsigma}_{k(0)},0}^{\lfloor[4]\overline{\mu}_k,0\rfloor}=1$ and $P_{\overline{\mu}_k,l}=Q_{\overline{\mu}_k,l}=\mathbf{I}_n$, $\forall\ l\in\{0,n-1\}$.}
		\end{tablenotes}
	\end{threeparttable}
\end{theorem}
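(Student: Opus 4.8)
The plan is to imitate the proof of Theorem~\ref{3-2} in the log-normal setting, with $\boldsymbol{\Phi}_{D^{\star}}(\theta)$, $\Upsilon(\theta)$ and Algorithm~\ref{4} playing the roles that $\boldsymbol{\Phi}_{C^{\star}}(\theta)$, $\int_{0}^{\theta}(\boldsymbol{\Phi}_{C^{\star}}(\theta)\boldsymbol{\Phi}^{-1}_{C^{\star}}(t)\Gamma^{\star}(t))(\cdots)^{\top}dt$ and Algorithm~\ref{2} played there. First I would introduce, for each $k\in\mathbb{S}_{\overline{\omega}}^0$, the auxiliary discrete Lyapunov equation
\begin{equation*}
\Im_d\bigl(\widehat{\Sigma}_{[4]\overline{\mu}_k,\epsilon},\boldsymbol{\Phi}_{D^{\star}}(\theta),\amalg_{n,\overline{\mu}_k}\bigr)=\mathbb{O},
\end{equation*}
which, by Assumption~\ref{2;2}(3) (so that $\boldsymbol{\Phi}_{D^{\star}}(\theta)\in\overline{\mathbf{CM}}(n)$) and Lemma~\ref{2,1}, admits a unique symmetric solution. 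Running the Gaussian-like elimination recursion of Algorithm~\ref{4} on the pair $(\boldsymbol{\Phi}_{D^{\star}}(\theta),\overline{\boldsymbol{\mu}})$ and replaying the chain of equivalences from the proof of Theorem~\ref{3-1}, namely (\ref{3.12})--(\ref{3.34}) (equivalently the condensed version (\ref{4.5})--(\ref{4.10}) used for Theorem~\ref{4-1}), the order transformation $J_{[4]\overline{\mu}_k}$, the rotation and elimination transformations $P_{[4]\overline{\mu}_k,i},Q_{[4]\overline{\mu}_k,i}$, and a standardization $M_{[4]\overline{\mu}_k,\overline{\delta}_k}$ reduce the $\overline{\delta}_k$-leading block to an $\overline{\delta}_k$-dimensional standard $\mathbb{L}_0$-algebraic equation with solution $\Xi_{[4]\overline{\mu}_k,\overline{\delta}_k}\succ\mathbb{O}$ (Proposition~\ref{2&1}), while the complementary block vanishes. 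The same bookkeeping as in (\ref{3.30})--(\ref{3.34}) — using $\mathbf{H}_{[4]\overline{\mu}_k,1}^{(1)}=X_{\overline{\mu}_k}$, the nesting identity $\mathbf{H}_{[4]\overline{\mu}_k,l}^{\langle j\rangle}=\mathbf{H}_{[4]\overline{\mu}_k,j}^{\langle j\rangle}$ for $j\le l$, and a positive constant $\widehat{\lambda}_{\vartriangle}^{\diamond}$ arising as a minimal eigenvalue exactly as $\widehat{\lambda}_{\vartriangle}$ does in (\ref{3.44}) — then yields
\begin{equation*}
\mathbf{X}^{\top}\Bigl(\sum_{k=1}^{\overline{\omega}}\widehat{\Sigma}_{[4]\overline{\mu}_k,\epsilon}\Bigr)\mathbf{X}\ge\widehat{\lambda}_{\vartriangle}^{\diamond}\min_{k\in\mathbb{S}_{\overline{\omega}}^0}\Bigl\{\Bigl(\prod_{i=0}^{\overline{\delta}_k-1}\overline{a}_{\overline{\varsigma}_{k(i)},i}^{\lfloor[4]\overline{\mu}_k,i\rfloor}\Bigr)^2\Bigr\}\sum_{k=1}^{\overline{\omega}}\Bigl(X_{\overline{\mu}_k}^2+\sum_{j=2}^{\overline{\delta}_k}(\mathbf{H}_{[4]\overline{\mu}_k,j}^{(j)})^2\Bigr).
\end{equation*}

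Next I would strip off the remaining part of the forcing term through the auxiliary equation
\begin{equation*}
\Im_d\Bigl(\Sigma_{\text{aux}}^d,\ \boldsymbol{\Phi}_{D^{\star}}(\theta),\ \epsilon\Bigl(\Upsilon(\theta)-\lambda_{\vartriangle}^{\oplus}\sum_{k=1}^{\overline{\omega}}\amalg_{n,\overline{\mu}_k}\Bigr)\Bigr)=\mathbb{O}.
\end{equation*}
The defining inequality of $\lambda_{\vartriangle}^{\oplus}$ and $\overline{\boldsymbol{\mu}}$ makes the forcing matrix positive semidefinite, and since $\boldsymbol{\Phi}_{D^{\star}}(\theta)\in\overline{\mathbf{CM}}(n)$ has spectral radius strictly below $1$, one has $\lim_{m\to\infty}\boldsymbol{\Phi}_{D^{\star}}^m(\theta)=\mathbb{O}$; the standard argument of (\ref{a.2})--(\ref{a.4}) then yields the absolutely convergent representation
\begin{equation*}
\Sigma_{\text{aux}}^d=\epsilon\sum_{m=0}^{\infty}\boldsymbol{\Phi}_{D^{\star}}^m(\theta)\Bigl(\Upsilon(\theta)-\lambda_{\vartriangle}^{\oplus}\sum_{k=1}^{\overline{\omega}}\amalg_{n,\overline{\mu}_k}\Bigr)(\boldsymbol{\Phi}_{D^{\star}}^m(\theta))^{\top}\succeq\mathbb{O}.
\end{equation*}
The superposition principle applied to (\ref{4.4}), the auxiliary equations above, and the $\Sigma_{\text{aux}}^d$-equation then gives $\Sigma^d_{\epsilon}(0)=\epsilon\lambda_{\vartriangle}^{\oplus}\sum_{k=1}^{\overline{\omega}}\widehat{\Sigma}_{[4]\overline{\mu}_k,\epsilon}+\Sigma_{\text{aux}}^d$; since $\Sigma_{\text{aux}}^d\succeq\mathbb{O}$, combining with the first bound produces the claim with $\overline{\varrho}_{\epsilon}=\epsilon\lambda_{\vartriangle}^{\oplus}\widehat{\lambda}_{\vartriangle}^{\diamond}\min_{k\in\mathbb{S}_{\overline{\omega}}^0}\{(\prod_{i=0}^{\overline{\delta}_k-1}\overline{a}_{\overline{\varsigma}_{k(i)},i}^{\lfloor[4]\overline{\mu}_k,i\rfloor})^2\}>0$, which exhibits the stated proportionality; when $\overline{\boldsymbol{\mu}}=\emptyset$ the assertion is vacuous since $\Sigma^d_{\epsilon}(0)\succeq\mathbb{O}$ in any case.

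The genuinely delicate step is the first one — re-executing the elimination machinery of Theorem~\ref{3-1} for $\boldsymbol{\Phi}_{D^{\star}}(\theta)$ in place of $A_{[1]}$. One must verify that at each stage a ``suitable'' pivot $\overline{\varsigma}_{k(i)}\in\mathbb{S}_n^i$ with $\overline{a}_{\overline{\varsigma}_{k(i)},i}^{\lfloor[4]\overline{\mu}_k,i\rfloor}\neq0$ can indeed be chosen (this is precisely condition (4-i) produced by the recursion), that the reduced leading block $\overline{A}_{[4]\overline{\mu}_k,\overline{\delta}_k}^{(\overline{\delta}_k)}$ lies in $\mathcal{U}_{cm}(\overline{\delta}_k)$ and, after the $M_{[4]\overline{\mu}_k,\overline{\delta}_k}$-conjugation together with Proposition~\ref{2&2}, in $\mathscr{T}(\overline{\delta}_k)$, so that Proposition~\ref{2&1} applies and $\Xi_{[4]\overline{\mu}_k,\overline{\delta}_k}\succ\mathbb{O}$; here one uses that the leading principal submatrices generated by the reduction inherit membership in $\overline{\mathbf{CM}}(\cdot)$ from $\boldsymbol{\Phi}_{D^{\star}}(\theta)\in\overline{\mathbf{CM}}(n)$, in the spirit of (\ref{3.23}) and the lines following it. By contrast, the positive-semidefinite splitting of the second step is routine, being the same Neumann-series device already used for Theorem~\ref{3-2}.
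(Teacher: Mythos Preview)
Your proposal is correct and follows essentially the same approach as the paper: you split $\Sigma^d_{\epsilon}(0)$ via the auxiliary equations $\Im_d(\widehat{\Sigma}_{[4]\overline{\mu}_k,\epsilon},\boldsymbol{\Phi}_{D^{\star}}(\theta),\amalg_{n,\overline{\mu}_k})=\mathbb{O}$ plus a positive-semidefinite remainder $\Sigma_{\text{aux}}^d$ (this is exactly the argument of (\ref{3.43})--(\ref{3.46}) that the paper invokes), and then replay the Gaussian-like elimination (\ref{3.12})--(\ref{3.34}) to extract the quadratic lower bound. The paper's own proof is in fact terser than yours, simply citing these earlier passages and omitting the details you spell out.
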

\begin{proof}
By a standard argument in (\ref{3.43}) and (\ref{3.45}), there holds
\begin{equation*}
\Sigma^d_{\epsilon}(0)\succeq\epsilon\lambda_{\vartriangle}^{\oplus}\sum_{k=1}^{\overline{\omega}}\widehat{\Sigma}_{[4]\overline{\mu}_k,\epsilon},
\end{equation*} 
where $\widehat{\Sigma}_{[4]\overline{\mu}_k,\epsilon}\ (k\in\mathbb{S}_{\overline{\omega}}^0)$ are the solutions of the following Lyapunov equations, respectively:
\begin{equation*}
\Im_d\bigl(\widehat{\Sigma}_{[4]\overline{\mu}_k,\epsilon},\boldsymbol{\Phi}_{D^{\star}}(\theta),\amalg_{n,\overline{\mu}_k}\bigr)=\mathbb{O}.
\end{equation*}
The rest of the proof is similar to that of (\ref{3.12})-(\ref{3.34}), and is thus omitted. 	
\end{proof}
Finally, by Theorems \ref{4-1} and \ref{4-2}, we can obtain the following two results analogous to Proposition \ref{3&1} and Corollary \ref{3/1}. 
\begin{proposition}\label{4&1}
	Let Assumption \ref{2;2} holds. If (\ref{1.2}) is independent of $t$ (i.e., $f(t,\cdot)=f(\cdot)$ and $g_{ij}(t,\cdot)=g_{ij}(\cdot)$), then
	\begin{equation}
	\Im_c\Bigl(\Sigma_{\epsilon}^d(t),D^*,\epsilon\Lambda^*(\Lambda^*)^{\top}-\frac{d(\Sigma_{\epsilon}^d(t))}{dt}\Bigr)=\mathbb{O},
	\end{equation} 
	where 
	$$D^*=\bigg(\frac{\partial}{\partial(\ln x_j)}\Big(\frac{f_i(\mathbf{x})}{x_i}-\frac{\epsilon}{2}\sum_{j=1}^{N}g_{ij}^2(\mathbf{x})\Big)\bigg)_{n\times n}\bigl|_{\mathbf{x}=e^{\boldsymbol{\Psi}^*_{\epsilon}}},\ \ \ \Lambda^*=(g_{ij}(e^{\boldsymbol{\Psi}^*_{\epsilon}}))_{n\times N},$$
	with $\boldsymbol{\Psi}^*_{\epsilon}$ being the unique equilibrium of (\ref{2.4}).
\end{proposition}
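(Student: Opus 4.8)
The plan is to run the argument of Proposition \ref{3&1} in the log-variables, exploiting that the present hypotheses make (\ref{1.2}) genuinely autonomous. First I would observe that, when $f(t,\cdot)\equiv f(\cdot)$ and $g_{ij}(t,\cdot)\equiv g_{ij}(\cdot)$, system (\ref{2.4}) has constant coefficients; hence its unique $\theta$-periodic solution, furnished by Assumption \ref{2;2}(1), cannot be a nonconstant periodic orbit (any nonzero time-shift of such an orbit would be a second $\theta$-periodic solution), so it must be the equilibrium $\boldsymbol{\Psi}^*_{\epsilon}$. Consequently $D^{\star}(t)\equiv D^*$ and $\Lambda^{\star}(t)\equiv\Lambda^*$, $\boldsymbol{\Phi}_{D^{\star}}(t)=e^{D^*t}$, and by Assumption \ref{2;2}(3) the monodromy matrix $e^{D^*\theta}=\boldsymbol{\Phi}_{D^{\star}}(\theta)$ lies in $\overline{\mathbf{CM}}(n)$, so Lemma \ref{2,1} applies to every discrete-type Lyapunov equation with coefficient $e^{D^*\theta}$.

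Next, imitating (\ref{3.37}) with $(C^{\star},\Gamma^{\star},\mathbf{Z}_{\epsilon})$ replaced by $(D^*,\Lambda^*,\mathbf{Y}_{\epsilon,\theta})$, I would split the stochastic integral in the explicit form of the SPS of (\ref{4.2}) at time $t$, use $e^{D^*(t+\theta)}=e^{D^*\theta}e^{D^*t}$ and the independence of the Brownian increments on $[t,t+\theta]$ from $\mathscr{F}_t$ to write
\begin{equation*}
\mathbf{Y}_{\epsilon,\theta}(t+\theta)=e^{D^*\theta}\mathbf{Y}_{\epsilon,\theta}(t)+\sqrt{\epsilon}\int_{0}^{\theta}e^{D^*(\theta-s)}\Lambda^*\,d\mathbf{W}(s),
\end{equation*}
take variances, and invoke the periodicity $\Sigma_{\epsilon}^d(t+\theta)=\Sigma_{\epsilon}^d(t)$ together with $\Upsilon(\theta)=\int_{0}^{\theta}e^{D^*u}\Lambda^*(\Lambda^*)^{\top}e^{(D^*)^{\top}u}du$ to arrive at the discrete-type equation (the analog of (\ref{3.38})) $\Im_d(\Sigma_{\epsilon}^d(t),e^{D^*\theta},\epsilon\Upsilon(\theta))=\mathbb{O}$ for every $t\ge0$. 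Because the source term is now independent of $t$, differentiating in $t$ gives $\Im_d(\frac{d}{dt}\Sigma_{\epsilon}^d(t),e^{D^*\theta},\mathbb{O})=\mathbb{O}$, and the uniqueness in Lemma \ref{2,1} forces $\frac{d}{dt}\Sigma_{\epsilon}^d(t)=\mathbb{O}$ (equivalently, solving (\ref{4.4}) shows $\Sigma_{\epsilon}^d(t)\equiv\epsilon\int_{0}^{\infty}e^{D^*u}\Lambda^*(\Lambda^*)^{\top}e^{(D^*)^{\top}u}du$, the PLNA counterpart of Remark \ref{33}).

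Finally I would convert the discrete equation into the desired continuous one exactly as in (\ref{3.39})--(\ref{3.40}). Setting $I_{\theta}:=\int_{0}^{\theta}e^{-D^*s}\Lambda^*(\Lambda^*)^{\top}e^{-(D^*)^{\top}s}ds$, so that $\Upsilon(\theta)=e^{D^*\theta}I_{\theta}e^{(D^*)^{\top}\theta}$ and the discrete equation reads $\Sigma_{\epsilon}^d(t)=e^{D^*\theta}\bigl(\Sigma_{\epsilon}^d(t)+\epsilon I_{\theta}\bigr)e^{(D^*)^{\top}\theta}$, the fundamental theorem of calculus yields
\begin{equation*}
D^*I_{\theta}+I_{\theta}(D^*)^{\top}=\Lambda^*(\Lambda^*)^{\top}-e^{-D^*\theta}\Lambda^*(\Lambda^*)^{\top}e^{-(D^*)^{\top}\theta}
\end{equation*}
(the term $\frac{d}{dt}(\Lambda^*(\Lambda^*)^{\top})$ present in (\ref{3.40}) drops out since $\Lambda^*$ is constant). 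Left-multiplying the rewritten discrete equation by $D^*$, adding the transpose, using commutativity of $D^*$ with $e^{D^*\theta}$, and substituting the identity above, one finds that $B:=D^*\Sigma_{\epsilon}^d(t)+\Sigma_{\epsilon}^d(t)(D^*)^{\top}+\epsilon\Lambda^*(\Lambda^*)^{\top}$ satisfies $B=e^{D^*\theta}Be^{(D^*)^{\top}\theta}$, i.e. $\Im_d(B,e^{D^*\theta},\mathbb{O})=\mathbb{O}$; uniqueness (Lemma \ref{2,1}) then forces $B=\mathbb{O}$. Combining $B=\mathbb{O}$ with $\frac{d}{dt}\Sigma_{\epsilon}^d(t)=\mathbb{O}$ gives precisely $\Im_c(\Sigma_{\epsilon}^d(t),D^*,\epsilon\Lambda^*(\Lambda^*)^{\top}-\frac{d}{dt}\Sigma_{\epsilon}^d(t))=\mathbb{O}$. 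The two points requiring a little care are the autonomy reduction (using Assumption \ref{2;2}(1) to conclude that $\boldsymbol{\Psi}^{\star}_{\epsilon}$ is an equilibrium, whence the coefficients are constant and $\boldsymbol{\Phi}_{D^{\star}}(t)=e^{D^*t}$) and the discrete-to-continuous conversion via the matrix-exponential identity; the rest is a term-by-term transcription of the proof of Proposition \ref{3&1}, lightened by the disappearance of all time-derivatives of the noise coefficient.
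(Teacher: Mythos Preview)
Your proposal is correct and follows essentially the same approach the paper intends: Proposition~\ref{4&1} is stated as the direct analog of Proposition~\ref{3&1}, and you transcribe that proof in the log-variables, with the natural simplifications arising because here \emph{both} $f$ and $g_{ij}$ are time-independent (so $\Lambda^{\star}(t)\equiv\Lambda^*$ and the $\tfrac{d}{dt}(\Gamma^{\star}\Gamma^{\star\top})$-terms of (\ref{3.39})--(\ref{3.40}) vanish). Your extra care in justifying that $\boldsymbol{\Psi}^{\star}_{\epsilon}(t)$ reduces to the equilibrium $\boldsymbol{\Psi}^{*}_{\epsilon}$, and your clean discrete-to-continuous conversion via the identity for $D^*I_{\theta}+I_{\theta}(D^*)^{\top}$, are exactly the right ingredients.
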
  
\begin{corollary}\label{4/1}
	Under Assumption \ref{2;2}, if one of the following four conditions holds: 
	$${\rm{(1)}}\ \omega=n,\ \ {\rm{(2)}}\ \overline{\delta}_{k_3}=n,\ \ \exists\ k_3\in\mathbb{S}_{\omega}^0,\ \ {\rm{(3)}}\ \overline{\omega}=n,\ \ {\rm{(4)}}\ \overline{\delta}_{k_4}=n,\ \ \exists\ k_4\in\mathbb{S}_{\overline{\omega}}^0.$$
	Then $\Sigma^d_{\epsilon}(t)\succ\mathbb{O}$.
\end{corollary}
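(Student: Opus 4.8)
The plan is to follow the template of the proof of Corollary~\ref{3/1}: reduce the assertion to the positive definiteness of $\Sigma^d_{\epsilon}(0)$, then dispose of the four hypotheses in two parallel pairs. First I would note that, since $\Sigma^d_{\epsilon}(t)=\boldsymbol{\Phi}_{D^{\star}}(t)\Sigma^d_{\epsilon}(0)\boldsymbol{\Phi}_{D^{\star}}^{\top}(t)+\epsilon\Upsilon(t)$ with $\Upsilon(t)\succeq\mathbb{O}$ for all $t\ge 0$, one has $\mathbf{X}^{\top}\Sigma^d_{\epsilon}(t)\mathbf{X}\ge\big(\boldsymbol{\Phi}^{\top}_{D^{\star}}(t)\mathbf{X}\big)^{\top}\Sigma^d_{\epsilon}(0)\big(\boldsymbol{\Phi}^{\top}_{D^{\star}}(t)\mathbf{X}\big)$ for every $\mathbf{X}\in\mathbb{R}^n$; as $\boldsymbol{\Phi}_{D^{\star}}(t)$ is a fundamental matrix, hence nonsingular, $\Sigma^d_{\epsilon}(0)\succ\mathbb{O}$ forces $\Sigma^d_{\epsilon}(t)\succ\mathbb{O}$ for all $t\ge 0$. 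So it suffices to show, under each of (1)--(4), that $\mathbf{X}^{\top}\Sigma^d_{\epsilon}(0)\mathbf{X}=0$ implies $\mathbf{X}=\mathbf{0}$; equivalently, since $\mathcal{H}_{\epsilon}$ is orthogonal, that $\mathbf{Y}=\mathcal{H}_{\epsilon}\mathbf{X}=\mathbf{0}$ in the hypotheses expressed through $\boldsymbol{\mu}$, or $\mathbf{X}=\mathbf{0}$ directly in those expressed through $\overline{\boldsymbol{\mu}}$. (Throughout I read condition~(2) with $\delta_{k_3}$ the Algorithm~\ref{3} output for $k_3\in\mathbb{S}_{\omega}^0$ and condition~(4) with $\overline{\delta}_{k_4}$ the Algorithm~\ref{4} output for $k_4\in\mathbb{S}_{\overline{\omega}}^0$.)

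For the full-rank hypotheses: under condition~(1), $\omega=\mathrm{rank}(\Upsilon(\theta))=n$ forces $\boldsymbol{\mu}=\mathbb{S}_n^0$, i.e.\ $\mu_k=k$ for all $k$, so the lower bound (\ref{4.7}) degenerates to $0=\mathbf{X}^{\top}\Sigma^d_{\epsilon}(0)\mathbf{X}\ge\varrho_{\epsilon}\sum_{k=1}^{n}Y_k^2=\varrho_{\epsilon}|\mathbf{Y}|^2$, and $\varrho_{\epsilon}>0$ gives $\mathbf{Y}=\mathbf{0}$, hence $\mathbf{X}=\mathbf{0}$. Under condition~(3), $\overline{\omega}=n$ forces $\overline{\boldsymbol{\mu}}=\mathbb{S}_n^0$, and the lower bound of Theorem~\ref{4-2} likewise collapses to $0\ge\overline{\varrho}_{\epsilon}\sum_{k=1}^{n}X_k^2=\overline{\varrho}_{\epsilon}|\mathbf{X}|^2$, whence $\mathbf{X}=\mathbf{0}$.

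For the ``prefix'' hypotheses (2) and (4): here I would first record the prefix-stabilization identity $\mathbf{H}_{[3]\mu_k,l}^{\langle j\rangle}=\mathbf{H}_{[3]\mu_k,j}^{\langle j\rangle}$ for $j\le l\le\delta_k$ (together with its $[4]$-analogue for $\mathbf{H}_{[4]\overline{\mu}_k,\cdot}$), the exact counterpart of (\ref{3.32}), which follows from the block forms of $J_{[3]\mu_k},P_{[3]\mu_k,i},Q_{[3]\mu_k,i}$ by the same computation as in \cite[Proof of (3.29)]{S39}. Granting it, if $\delta_{k_3}=n$ then, using also $\mathbf{H}_{[3]\mu_{k_3},1}^{(1)}=Y_{\mu_{k_3}}$, the bound (\ref{4.7}) restricted to the $k_3$-th summand reads $0=\mathbf{X}^{\top}\Sigma^d_{\epsilon}(0)\mathbf{X}\ge\varrho_{\epsilon}\big(Y_{\mu_{k_3}}^2+\sum_{j=2}^{n}(\mathbf{H}_{[3]\mu_{k_3},n}^{(j)})^2\big)=\varrho_{\epsilon}|\mathbf{H}_{[3]\mu_{k_3},n}|^2$; since $\mathbf{H}_{[3]\mu_{k_3},n}=\big(\prod_{i=0}^{n-1}(Q_{[3]\mu_{k_3},i}^{-1})^{\top}P_{[3]\mu_{k_3},i}\big)J_{[3]\mu_{k_3}}\mathbf{Y}$ is an invertible linear image of $\mathbf{Y}$ (a composition of permutation and unitriangular matrices), $\mathbf{H}_{[3]\mu_{k_3},n}=\mathbf{0}$ forces $\mathbf{Y}=\mathbf{0}$, hence $\mathbf{X}=\mathbf{0}$. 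Condition~(4) is disposed of identically, starting from the bound of Theorem~\ref{4-2} and using the invertibility of $\mathbf{X}\mapsto\mathbf{H}_{[4]\overline{\mu}_{k_4},n}$ when $\overline{\delta}_{k_4}=n$. In all four cases $\mathbf{X}^{\top}\Sigma^d_{\epsilon}(0)\mathbf{X}=0\Rightarrow\mathbf{X}=\mathbf{0}$, so $\Sigma^d_{\epsilon}(0)\succ\mathbb{O}$, and the opening reduction yields $\Sigma^d_{\epsilon}(t)\succ\mathbb{O}$ for all $t\ge 0$.

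I do not expect a genuine obstacle: the statement is by design a corollary of Theorems~\ref{4-1} and \ref{4-2}, and the only non-clerical ingredient is the prefix-stabilization identity, whose proof is a verbatim transcription of that of (\ref{3.32}). The single point requiring attention is keeping the indices straight, namely that $\delta_{k_3}$ in (2) and $\overline{\delta}_{k_4}$ in (4) refer to the outputs of Algorithms~\ref{3} and \ref{4} respectively; stating this correspondence up front removes the only possible source of confusion.
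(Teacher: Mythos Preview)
Your proposal is correct and follows essentially the same route the paper intends: since the paper states Corollary~\ref{4/1} as the direct analogue of Corollary~\ref{3/1} via Theorems~\ref{4-1} and \ref{4-2} without a separate proof, your case split (full-rank versus single-$\delta_k$ hypotheses) and your use of the prefix-stabilization identity (already recorded at the end of the proof of Theorem~\ref{4-1}) mirror exactly the argument of Corollary~\ref{3/1}. Your reading of condition~(2) as $\delta_{k_3}=n$ for $k_3\in\mathbb{S}_{\omega}^0$ is also the right fix for what is evidently a typo in the statement, and your implicit correction of the $\sum$ in the displayed formula for $\mathbf{H}_{[3]\mu_k,j}$ to $\prod$ (matching the PNOA template) is likewise appropriate.
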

\section{Applications}
This section presents some numerical examples to verify our theoretical results. A more biologically reasonable stochastic modeling assumption is provided simultaneously.

Consider a classic Logistic model
\begin{equation}\label{5.1}
     dx(t)=x(t)(\overline{r}-bx(t))dt,
\end{equation}
where $x(t)$ is the population level at time $t$, $b>0$ denotes the individual competition strength within species, and $\overline{r}$ represents the intrinsic growth rate. In practical terms, the growth rate is largely affected by various environmental variations such as soil erosion, predation, population migration, and epidemics. Therefore,  it is better to consider $\overline{r}$ as a random variable $r(t)$ that fluctuates around the mean $\overline{r}$, potentially even taking negative values. A common approach to model such continuous fluctuation is linear perturbation \cite{S49}, i.e., a linear function of Gaussian white noise:
\begin{equation}\label{5.2}
    r(t)=\overline{r}+\frac{\sigma_0dW(t)}{dt},
\end{equation}
where $\sigma_0^2>0$ represents the noise density of $W(t)$. Currently, the choice of linear perturbation has been extensively used by biologists for exploring the impact of stochastic mechanisms on population dynamics or virus infection; see \cite{S50,S51,S52,S53} and references therein. However, a notable limitation of such approach is that by defining $\langle r(T)\rangle$ as the time averages of $r(t)$ on time interval $[0,T]$, we obtain
\begin{equation*}
\langle r(T)\rangle=\frac{1}{T}\int_{0}^{T}r(t)dt=\overline{r}+\frac{\sigma_0W(T)}{T}\thicksim\mathbb{N}\Bigl(\overline{r},\frac{\sigma_0^2}{T}\Bigr).
\end{equation*}
Intuitively, the variance $\frac{\sigma_0^2}{T}$ tends to infinity as $T\rightarrow 0$. This implies that the fluctuation of $r(t)$ will become very large in a sufficiently small time interval, which is inconsistent with facts. More precisely, successive averages of the process of linear perturbation experience large random oscillations \cite{S54}. To eliminate the conceptual and practical difficulties associated with linear perturbation, a possible way is to consider the key parameters in a randomly varying environment as mean-reverting processes \cite{S55}. A classic mean-reverting process is the Ornstein--Uhlenbeck process which, for the growth rate $r(t)$, has the form: 
\begin{equation}\label{5.3}
dr(t)=m_0\bigl(\overline{r}-r(t)\bigr)dt+\sigma_0dW(t),
\end{equation}
where $W(t)$ and $\sigma_0$ are the same as in (\ref{5.2}). $m_0>0$ denotes the speed of reversion, which satisfies $m_0\neq\overline{r}$. By a standard argument in \cite{S54}, we have $r(t)\thicksim\mathbb{N}\bigl(\mathbb{E}(r(t)),\mathbb{V}\text{ar}(r(t))\bigr)$, where $\mathbb{E}(r(t))=\overline{r}+(r(0)-\overline{r})e^{-m_0t}$ and $\mathbb{V}\text{ar}(r(t))=\frac{\sigma_0^2}{2m_0}(1-e^{-2m_0t})$. By calculation, 
\begin{equation*}
\lim_{t\rightarrow 0}\mathbb{E}(r(t))=\overline{r},\ \ \ \lim_{t\rightarrow 0}\mathbb{V}\text{ar}(r(t))=0,
\end{equation*}
and 
\begin{equation*}
\mathbb{V}\text{ar}(\langle r(T)\rangle)=\frac{\sigma_0^2}{3}T+O(T^2),\ \ \ \text{for\ some\ small}\ T.
\end{equation*}
Clearly, the fluctuation of $r(t)$ in (\ref{5.3}) is sufficiently small in a small time interval. The study of biological models motivated by Ornstein--Uhlenbeck processes have attracted much attention in recent two years. For related interesting work, we refer to the treatises \cite{S56,S57,S58,S59}, and references therein. Along this line, we further assume that (\ref{5.3}) is perturbed by small diffusion and periodically varying environment. Inspired by the above, we consider the following stochastic population model: 
\begin{equation}\label{5.4}
\begin{cases}
dx_{\epsilon}(t)=x_{\epsilon}(t)\big(r_{\epsilon}(t)-bx_{\epsilon}(t)\big)dt,\\
dr_{\epsilon}(t)=m_0\bigl(\overline{r}-r_{\epsilon}(t)\bigr)dt+\sqrt{\epsilon}\sigma(t)dW(t),
\end{cases}
\end{equation}
where $\sigma(t)$ is a periodic function with period $\theta$.

Below we aim to derive the approximate SPSD using our main results and assess its approximation effect on the realistic one. Direct calculation shows that $\Gamma^{\star}(t)=\text{diag}\{0,\sigma(t)\}$, $\mathbf{X}^{\star}(t)=(\frac{\overline{r}}{b},\overline{r})^{\top}$, and 
$$C^{\star}(t)=\left(\begin{array}{cc}
-\overline{r} & \frac{\overline{r}}{b} \\ 
0 & -m_0
\end{array}\right):=C^*,\ \ \ \ (\text{constant\ matrix})$$
which means $\boldsymbol{\Phi}_{C^{\star}}(t)=e^{C^*t}$ (see Proposition \ref{3&1}). Note that $C^*$ has two different eigenvalues $-\overline{r}$ and $-m_0$, Assumption \ref{2;1} is then satisfied by (\ref{5.4}). Moreover, an application of Cayley--Hamilton theorem yields
\begin{equation*}
\boldsymbol{\Phi}_{C^{\star}}(t)=\left(\begin{array}{cc}
e^{-\overline{r}t} & \frac{\overline{r}(e^{-m_0t}-e^{-\overline{r}t})}{b(\overline{r}-m_0)} \\ 
0 & e^{-m_0t}
\end{array}\right),\ \ \ \boldsymbol{\Phi}^{-1}_{C^{\star}}(t)=\left(\begin{array}{cc}
e^{\overline{r}t} & \frac{\overline{r}(e^{m_0t}-e^{\overline{r}t})}{b(\overline{r}-m_0)} \\ 
0 & e^{m_0t}
\end{array}\right).
\end{equation*}
In what follows, for simplicity, we assume $\sigma(t)=\sigma_0\sin(\frac{2\pi t}{\theta})$, where $\sigma_0>0$ is a constant.
Now let $$\nabla_0(t)=\int_{0}^{t}\big(\boldsymbol{\Phi}^{-1}_{C^{\star}}(s)\Gamma^{\star}(s)\big)\big(\boldsymbol{\Phi}^{-1}_{C^{\star}}(s)\Gamma^{\star}(s)\big)^{\top}ds:=\left(\begin{array}{cc}
\hbar_{11}(t) & \hbar_{12}(t) \\ 
\hbar_{12}(t) & \hbar_{22}(t)
\end{array}\right).$$
By complex calculation, one gets
\begin{equation*}
\begin{cases}
\hbar_{11}(t)=\dfrac{\sigma_0^2\overline{r}^2}{4b^2(\overline{r}-m_0)^2}\bigg\{\Big[e^{2m_0t}\Big(\dfrac{1}{m_0}-\dfrac{m_0\theta^2\cos(\frac{4\pi t}{\theta})+2\pi\theta\sin(\frac{4\pi t}{\theta})}{m_0^2\theta^2+4\pi^2}\Big)-\dfrac{4\pi^2}{m_0(m_0^2\theta^2+4\pi^2)}\Big]\\
\ \ \ \ \ \ \ \ \ \ \ +\Big[e^{2\overline{r}t}\Big(\dfrac{1}{\overline{r}}-\dfrac{\overline{r}\theta^2\cos(\frac{4\pi t}{\theta})+2\pi\theta\sin(\frac{4\pi t}{\theta})}{\overline{r}^2\theta^2+4\pi^2}\Big)-\dfrac{4\pi^2}{\overline{r}(\overline{r}^2\theta^2+4\pi^2)}\Big]\\
\ \ \ \ \ \ \ \ \ \ \  -4\Big[e^{(\overline{r}+m_0)t}\Big(\dfrac{1}{\overline{r}+m_0}-\dfrac{(\overline{r}+m_0)\theta^2\cos(\frac{4\pi t}{\theta})+4\pi\theta\sin(\frac{4\pi t}{\theta})}{(\overline{r}+m_0)^2\theta^2+16\pi^2}\Big)-\dfrac{16\pi^2}{(\overline{r}+m_0)((\overline{r}+m_0)^2\theta^2+16\pi^2)}\Big]\bigg\},\\
\hbar_{12}(t)=\dfrac{\sigma_0^2\overline{r}}{4b(\overline{r}-m_0)}\bigg\{\Big[e^{2m_0t}\Big(\dfrac{1}{m_0}-\dfrac{m_0\theta^2\cos(\frac{4\pi t}{\theta})+2\pi\theta\sin(\frac{4\pi t}{\theta})}{m_0^2\theta^2+4\pi^2}\Big)-\dfrac{4\pi^2}{m_0(m_0^2\theta^2+4\pi^2)}\Big]\\
\ \ \ \ \ \ \ \ \ \ \ -2\Big[e^{(\overline{r}+m_0)t}\Big(\dfrac{1}{\overline{r}+m_0}-\dfrac{(\overline{r}+m_0)\theta^2\cos(\frac{4\pi t}{\theta})+4\pi\theta\sin(\frac{4\pi t}{\theta})}{(\overline{r}+m_0)^2\theta^2+16\pi^2}\Big)-\dfrac{16\pi^2}{(\overline{r}+m_0)((\overline{r}+m_0)^2\theta^2+16\pi^2)}\Big]\bigg\},\\
\hbar_{22}(t)=\dfrac{\sigma_0^2}{4}\Big[e^{2m_0t}\Big(\dfrac{1}{m_0}-\dfrac{m_0\theta^2\cos(\frac{4\pi t}{\theta})+2\pi\theta\sin(\frac{4\pi t}{\theta})}{m_0^2\theta^2+4\pi^2}\Big)-\dfrac{4\pi^2}{m_0(m_0^2\theta^2+4\pi^2)}\Big].
\end{cases}
\end{equation*}
In particular, if $t=\theta$, then
\begin{equation*}
\begin{cases}
\hbar_{11}(\theta)=\dfrac{(\sigma_0\overline{r}\pi)^2}{b^2(\overline{r}-m_0)^2}\Big[\dfrac{e^{2m_0\theta}-1}{m_0(m_0^2\theta^2+4\pi^2)}+\dfrac{e^{2\overline{r}\theta}-1}{\overline{r}(\overline{r}^2\theta^2+4\pi^2)}-\dfrac{16(e^{(\overline{r}+m_0)\theta}-1)}{(\overline{r}+m_0)((\overline{r}+m_0)^2\theta^2+16\pi^2)}\Big],\\
\hbar_{12}(\theta)=\dfrac{\sigma_0^2\pi^2\overline{r}}{b(\overline{r}-m_0)}\Big[\dfrac{e^{2m_0\theta}-1}{m_0(m_0^2\theta^2+4\pi^2)}-\dfrac{8(e^{(\overline{r}+m_0)\theta}-1)}{(\overline{r}+m_0)((\overline{r}+m_0)^2\theta^2+16\pi^2)}\Big],\\
\hbar_{22}(\theta)=\dfrac{\sigma_0^2\pi^2(e^{2m_0\theta}-1)}{m_0(m_0^2\theta^2+4\pi^2)}.
\end{cases}
\end{equation*}
As shown in (\ref{3.9}), $\Sigma^c_{\epsilon}(0)$ is determined by
\begin{equation}\label{5.5}
\Im_d\Big(\Sigma^c_{\epsilon}(0),\boldsymbol{\Phi}_{C^{\star}}(\theta),\epsilon\boldsymbol{\Phi}_{C^{\star}}(\theta)\nabla_0(\theta)\big(\boldsymbol{\Phi}_{C^{\star}}(\theta)\big)^{\top}dt\Big)=\mathbb{O}.
\end{equation}
Using Algorithm \ref{1}, we obtain that $\eta_1=2$ (i.e., $\Sigma^c_{\epsilon}(0)\succ\mathbb{O}$) holds if and only if $\nabla_0(\theta)\succ\mathbb{O}$. In fact,
by virtue of the H$\ddot{\text{o}}$lder's inequality, we determine
\begin{equation*}
\begin{split}
|\nabla_0(\theta)|=&(\sigma_0\pi)^2\bigg[\frac{e^{2m_0\theta}-1}{m_0(m_0^2\theta^2+4\pi^2)}+\frac{e^{2\overline{r}\theta}-1}{\overline{r}(\overline{r}^2\theta^2+4\pi^2)}-\Big(\frac{e^{(\overline{r}+m_0)\theta}-1}{(\overline{r}+m_0)((\overline{r}+m_0)^2\theta^2+16\pi^2)}\Big)^2\bigg]\\
=&(\sigma_0\pi)^2\bigg[\int_{0}^{\theta}e^{2m_0t}\sin^2\Big(\frac{2\pi t}{\theta}\Big)dt+\int_{0}^{\theta}e^{2\overline{r}t}\sin^2\Big(\frac{2\pi t}{\theta}\Big)dt\\
&\ \ \ \ \ \ \ \ \ \ -\Big(\int_{0}^{\theta}e^{(\overline{r}+m_0)t}\sin^2\Big(\frac{2\pi t}{\theta}\Big)dt\Big)^2\bigg]>0.
\end{split}
\end{equation*}
This combined with Corollary \ref{3/1} yields $\Sigma^c_{\epsilon}(t)\succ\mathbb{O}$, $\forall\ t\ge 0$.

Below we derive the explicit form of $\Sigma^c_{\epsilon}(\cdot)$. Let $\Sigma^c_{\epsilon}(\cdot)_{(i,j)}$ be the $i$th element of the $j$th row of $\Sigma^c_{\epsilon}(\cdot)$ for convenience, $\forall\ i,j\in\mathbb{S}_2^0$. By Algorithm \ref{1}, a complex calculation for the definition of $\mathcal{G}_0$ and (\ref{5.5}) leads to
\begin{equation}\label{5.6}
\begin{cases}
\Sigma^c_{\epsilon}(0)_{(1,1)}=\dfrac{\epsilon}{(1-e^{-2\overline{r}\theta})(1-e^{-(\overline{r}+m_0)\theta})}\bigg[\dfrac{e^{2m_0\theta}(\sigma_0\overline{r}\pi)^2(1+e^{-(\overline{r}+m_0)\theta})(e^{-m_0\theta}-e^{-\overline{r}\theta})^2}{m_0b^2(\overline{r}-m_0)^2(m_0^2\theta^2+4\pi^2)}\\
\ \ \ \ \ \ \ \ \ \ \ \ \ \ \ \ \ +e^{-\overline{r}\theta}\Big(e^{-\overline{r}\theta}\big(1-e^{-(\overline{r}+m_0)\theta}\big)\hbar_{11}(\theta)+\dfrac{2\overline{r}(e^{-m_0\theta}-e^{-\overline{r}\theta})\hbar_{12}(\theta)}{b(\overline{r}-m_0)}\Big)\bigg],\\
\Sigma^c_{\epsilon}(0)_{(1,2)}=\dfrac{\epsilon}{(1-e^{-(\overline{r}+m_0)\theta})}\bigg[e^{-(\overline{r}+m_0)\theta}\hbar_{12}(\theta)+\dfrac{\overline{r}\sigma_0^2\pi^2e^{m_0\theta}(e^{-m_0\theta}-e^{-\overline{r}\theta})}{bm_0(\overline{r}-m_0)(m_0^2\theta^2+4\pi^2)}\bigg],\\
\Sigma^c_{\epsilon}(0)_{(2,2)}=\dfrac{\epsilon\sigma_0^2\pi^2}{m_0(m_0^2\theta^2+4\pi^2)}.
\end{cases}
\end{equation}
Combining Theorem \ref{3-1} yields
\begin{equation}\label{5.7}
\begin{cases}
\Sigma^c_{\epsilon}(t)_{(1,1)}=e^{-2\overline{r}t}\Big(\Sigma^c_{\epsilon}(0)_{(1,1)}+\epsilon\hbar_{11}(t)\Big)+\dfrac{2\overline{r}e^{-\overline{r}t}(e^{-m_0t}-e^{-\overline{r}t})}{b(\overline{r}-m_0)}\Big(\Sigma^c_{\epsilon}(0)_{(1,2)}+\epsilon\hbar_{12}(t)\Big)\\
\ \ \ \ \ \ \ \ \ \ \ \ \ \ \ +\dfrac{\overline{r}^2(e^{-m_0t}-e^{-\overline{r}t})^2}{b^2(\overline{r}-m_0)^2}\Big(\Sigma^c_{\epsilon}(0)_{(2,2)}+\epsilon\hbar_{22}(t)\Big),\\
\Sigma^c_{\epsilon}(t)_{(1,2)}=e^{-m_0t}\Big[e^{-\overline{r}t}\Big(\Sigma^c_{\epsilon}(0)_{(1,2)}+\epsilon\hbar_{12}(t)\Big)+\dfrac{\overline{r}(e^{-m_0t}-e^{-\overline{r}t})}{b(\overline{r}-m_0)}\Big(\Sigma^c_{\epsilon}(0)_{(2,2)}+\epsilon\hbar_{22}(t)\Big)\Big],\\
\Sigma^c_{\epsilon}(t)_{(2,2)}=e^{-2m_0t}\Big(\Sigma^c_{\epsilon}(0)_{(2,2)}+\epsilon\hbar_{22}(t)\Big).
\end{cases}
\end{equation}
As a summary, by Theorem \ref{3-1} we obtain
\begin{itemize}
	\item[($\boldsymbol{\otimes}$-1)] For sufficiently small $\epsilon>0$, system (\ref{5.4}) approximately has a local SPS $(\mathbf{X}^{\triangleright}_{\epsilon}(t))$ which follows the distribution $\mathbb{N}_2((\frac{\overline{r}}{b},\overline{r})^{\top},\Sigma^c_{\epsilon}(t))$, where $\Sigma^c_{\epsilon}(t)$ is defined in (\ref{5.7}).
\end{itemize}
\begin{remark}\label{55}
	{\rm{Before proceeding further, we have some comments:
	\begin{itemize}
	\item In fact, we can mimic the proofs of \cite[Theorem 3.1]{S60} and \cite[Theorems 4.1 and 5.1]{S20} to obtain that system (\ref{5.4}) has a unique stochastic $\theta$-periodic solution if $\overline{r}>0$. Moreover, such $\theta$-periodic solution is globally attractive.
 \item SPS denotes a long-time, steady state of a stochastic process involving the periodicity, and is an intuitive reflection of the tendency that the distribution of system states gradually presents periodic changes. However, the existence and form of such a periodic solution cannot be directly verified due to the features of a computer simulation, such as finite iterations and single sample paths the finite number of iterations and a single sample path. To address it and study the approximation effect of our algorithms explicitly, we use enough computer simulations and sufficient iterations as a viable alternative for the SPS of (\ref{5.4}). Moreover, the Monte Carlo numerical method is carried out. The main idea is listed below:
	\begin{itemize}
	\item[(i)] Let $\mathbf{Num}$ be the total number of simulation, for each simulation test $i\in\mathbb{S}_{\mathbf{Num}}^0$, based on Milstein's higher-order method \cite{S61}, we consider the discretization equations of (\ref{5.4}) on time interval $[0,T]$ with one step size $\Delta t$. Denote by $(x^{(i)}_{\epsilon}(k\Delta t),r^{(i)}_{\epsilon}(k\Delta t))^{\top}$ the value of the $k$th iteration of the simulation test $i$. According to ($\boldsymbol{\otimes}$-1), the initial value  $(x_{\epsilon}^{(i)}(0),r^{(i)}_{\epsilon}(0))^{\top}\thicksim\mathbb{N}_2((\frac{\overline{r}}{b},\overline{r})^{\top},\Sigma^c_{\epsilon}(0))$, $\forall\ i\in\mathbb{S}_{\mathbf{Num}}^0$.
	\item[(ii)] For convenience, we only choose the iteration values at the unit time, i.e., $(x^{(i)}_{\epsilon}(j),r^{(i)}_{\epsilon}(j))^{\top}$, $\forall\ j\in\mathbb{S}_T^0;i\in\mathbb{S}_{\mathbf{Num}}^0$. In this sense, the statistical mean and covariance of the iteration value at the integral time are
	\begin{equation*}
	\begin{cases}
	\overline{\text{M}}_x(j,\mathbf{Num})=\dfrac{\sum_{i\in\mathbb{S}_{\mathbf{Num}}^0}x^{(i)}_{\epsilon}(j)}{\mathbf{Num}},\ \ \overline{\text{Cov}}_{11}(j,\mathbf{Num})=\dfrac{\sum_{i\in\mathbb{S}_{\mathbf{Num}}^0}(x^{(i)}_{\epsilon}(j)-\overline{\text{M}}_x(j,\mathbf{Num}))^2}{\mathbf{Num}-1},\\ \overline{\text{M}}_r(j,\mathbf{Num})=\dfrac{\sum_{i\in\mathbb{S}_{\mathbf{Num}}^0}r^{(i)}_{\epsilon}(j)}{\mathbf{Num}},\ \ 
	\overline{\text{Cov}}_{22}(j,\mathbf{Num})=\dfrac{\sum_{i\in\mathbb{S}_{\mathbf{Num}}^0}(r^{(i)}_{\epsilon}(j)-\overline{\text{M}}_r(j,\mathbf{Num}))^2}{\mathbf{Num}-1},\\ \overline{\text{Cov}}_{12}(j,\mathbf{Num})=\dfrac{\sum_{i\in\mathbb{S}_{\mathbf{Num}}^0}(x^{(i)}_{\epsilon}(j)-\overline{\text{M}}_x(j,\mathbf{Num}))(r^{(i)}_{\epsilon}(j)-\overline{\text{M}}_r(j,\mathbf{Num}))}{\mathbf{Num}-1},
	\end{cases}
	\end{equation*}
	where $j\in\mathbb{S}_T^0$. Then the index $(\overline{\text{M}}_x(\cdot),\overline{\text{M}}_r(\cdot),\overline{\text{Cov}}_{11}(\cdot),\overline{\text{Cov}}_{12}(\cdot),\overline{\text{Cov}}_{22}(\cdot))$ is a good alternative for the mean and covariance of the underlying SPSD of (\ref{5.4}) at the unit time for some large $\mathbf{Num}$. Certainly, such alternative is more viable as $\mathbf{Num}$ increases.
	\item[(iii)] We define $(\overline{\mathbb{A}\text{ee}}_x,\overline{\mathbb{A}\text{ee}}_r)$ and $(\overline{\mathbb{A}\text{ev}}_{11},\overline{\mathbb{A}\text{ev}}_{12},\overline{\mathbb{A}\text{ev}}_{22})$ as the average relative error between the SPS of (\ref{5.4}) and the solution $(\mathbf{X}^{\triangleright}_{\epsilon}(t))$ regarding the mean and covariance. In view of (ii), then
	\begin{equation*}
	\overline{\mathbb{A}\text{ee}}_x=\frac{1}{T}\sum_{j\in\mathbb{S}_T^0}\frac{|\overline{\text{M}}_x(j,\mathbf{Num})-\frac{\overline{r}}{b}|}{\overline{\text{M}}_x(j,\mathbf{Num})},\ \ \overline{\mathbb{A}\text{ee}}_r=\frac{1}{T}\sum_{j\in\mathbb{S}_T^0}\frac{|\overline{\text{M}}_r(j,\mathbf{Num})-\overline{r}|}{\overline{\text{M}}_r(j,\mathbf{Num})},
	\end{equation*}
	\begin{equation*}
	\overline{\mathbb{A}\text{ev}}_{11}=\frac{1}{T}\sum_{j\in\mathbb{S}_T^0}\frac{|\overline{\text{Cov}}_{11}(j,\mathbf{Num})-\Sigma^c_{\epsilon}(j)_{(1,1)}|}{\overline{\text{Cov}}_{11}(j,\mathbf{Num})},\ \ \overline{\mathbb{A}\text{ev}}_{22}=\frac{1}{T}\sum_{j\in\mathbb{S}_T^0}\frac{|\overline{\text{Cov}}_{22}(j,\mathbf{Num})-\Sigma^c_{\epsilon}(j)_{(2,2)}|}{\overline{\text{Cov}}_{22}(j,\mathbf{Num})},
	\end{equation*}
	\begin{equation*}
	\overline{\mathbb{A}\text{ev}}_{12}=\frac{1}{T}\sum_{j\in\mathbb{S}_T^0}\frac{|\overline{\text{Cov}}_{12}(j,\mathbf{Num})-\Sigma^c_{\epsilon}(j)_{(1,2)}|}{\overline{\text{Cov}}_{12}(j,\mathbf{Num})},
	\end{equation*}
	which are clearly established on time interval $[0,T]$.
	\end{itemize}
	\end{itemize}}}
	\end{remark}
Below we provide a numerical example for illustration. We choose $T=400$ and $\Delta t=10^{-3}$. 
\begin{example}\label{5:1}
	{\rm{Consider (\ref{5.4}) with parameters $\overline{r}=0.5$, $m_0=0.3$, $b=0.5$ and $\sigma_0=0.1$. We focus on the following different combinations of noise intensity $\epsilon$ and periodic $\theta$:
	\begin{equation*}
		(\text{I})\ (\epsilon,\theta)=(0.01,50),\ \ (\text{II})\ (\epsilon,\theta)=(0.01,100),\ \ (\text{III})\ (\epsilon,\theta)=(0.05,100),\ \ (\text{IV})\ (\epsilon,\theta)=(0.1,100). 	
	\end{equation*}
	For case (I), a direct calculation for (\ref{5.6}) shows
	\begin{equation*}
	\Sigma^c_{\epsilon}(0)=10^{-4}\times\left(\begin{array}{cc}
	1.04524 & 0.34123 \\ 
	0.34123 & 0.12439
	\end{array} \right). 
	\end{equation*}
	Using (\ref{5.7}), we first plot the functions $x^{\star}(t)(=\frac{\overline{r}}{b})$, $r^{\star}(t)(=\overline{r})$ and $\Sigma^c_{\epsilon}(t)_{(i,j)}\ (i,j\in\mathbb{S}_2^0)$ on $t\in[0,200]$, as shown in the purple dotted lines of Fig. 2. By Remark \ref{55}, Fig. 2 further presents the variation trends of $\overline{\text{M}}_x(\cdot,\cdot)$, $\overline{\text{M}}_r(\cdot,\cdot)$ and $\overline{\text{Cov}}_{ij}(\cdot,\cdot)\ (i,j\in\mathbb{S}_2^0)$ at the simulation number $\mathbf{Num}=10^3$, $10^4$ and $10^5$, each in a different color. Obviously,
	the above functions $\Sigma^c_{\epsilon}(t)_{(i,j)}\ (i,j\in\mathbb{S}_2^0)$ all almost coincide with the corresponding three fitting curves. In addition, the function $\overline{\text{M}}_x(\cdot,\mathbf{Num})$ (resp., $\overline{\text{M}}_r(\cdot,\mathbf{Num})$) gradually approaches $x^{\star}(\cdot)$ (resp., $r^{\star}(\cdot)$) as $\mathbf{Num}$ increases. Thus, ($\boldsymbol{\otimes}$-1) is well verified, i.e., the SPSD of (\ref{5.4}) can be globally approximated by $(\mathbf{X}^{\triangleright}_{\epsilon}(t))$. To measure the similarity quantitatively, Table 2 shows the corresponding values of $\overline{\mathbb{A}\text{ee}}_x$, $\overline{\mathbb{A}\text{ee}}_r$ and $\overline{\mathbb{A}\text{ev}}_{ij}\ (i,j\in\mathbb{S}_2^0)$ at different simulation numbers. Intuitively, all the average relative errors inspected when $\mathbf{Num}\ge 10^4$ are less than 2\%. In this sense, we further use the Kolmogorov--Smirnov test \cite{S62} to test the alternative hypothesis that the numerical probability distribution of (\ref{5.4}) under $\mathbf{Num}=10^5$ and the distribution $\mathbb{N}_2((\frac{\overline{r}}{b},\overline{r})^{\top},\Sigma^c_{\epsilon}(t))$ are from different distributions against the null hypothesis that they are from the same distribution for each component. With 2\% significance level, the relevant tests imply that we cannot reject the null hypothesis. Hence, the similarity between the solution $(\mathbf{X}^{\triangleright}_{\epsilon}(t))$ and the underlying exact one of (\ref{5.4}) is significant.}}
\begin{figure}[H]
	\begin{center}
		\resizebox{18cm}{9.6cm}{\includegraphics{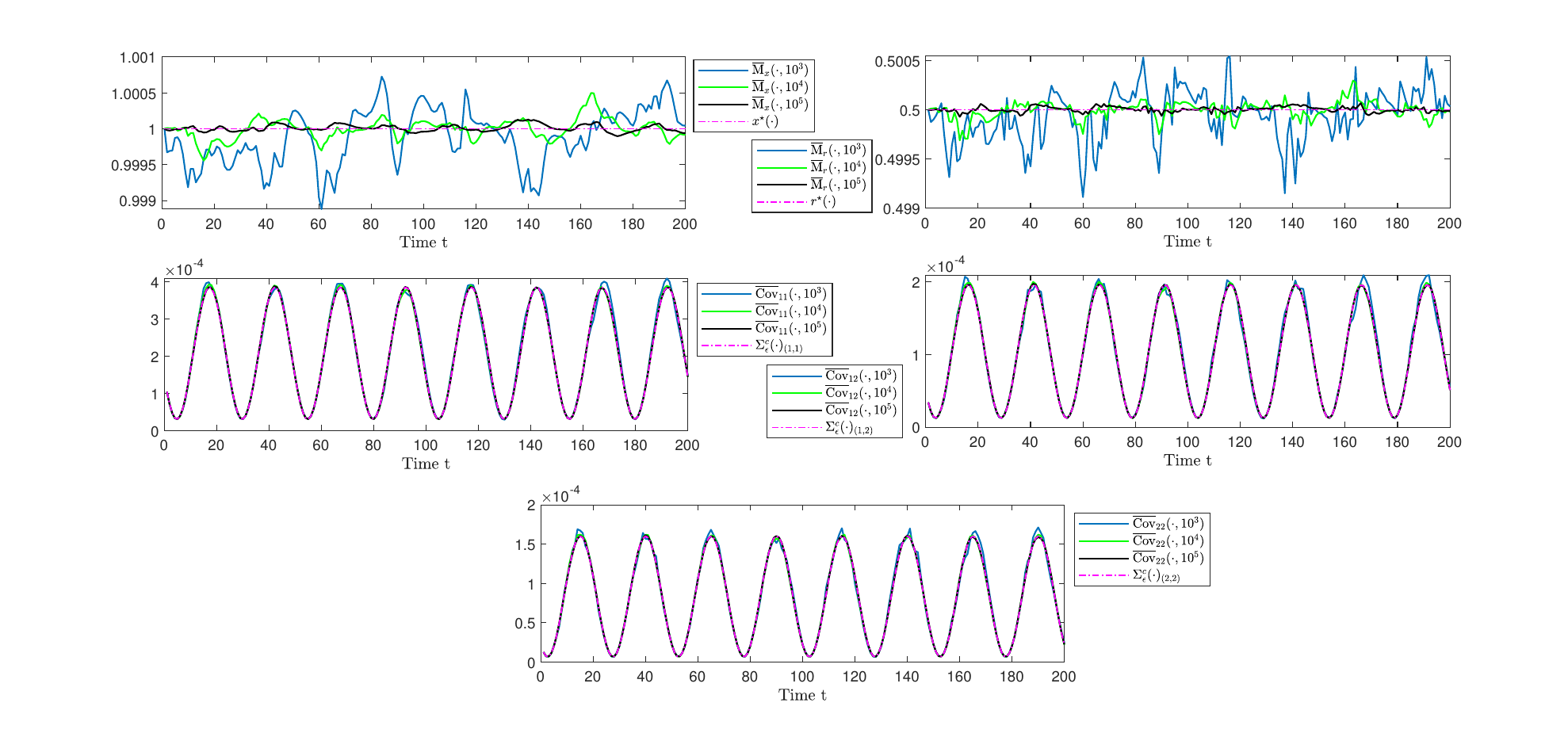}}
	\end{center}
	\makeatletter\def\@captype{figure}\makeatother \caption{The blue, green and black lines represent the functions $\overline{\text{M}}_x(\cdot)$, $\overline{\text{M}}_r(\cdot)$, $\overline{\text{Cov}}_{ij}(\cdot)\ (i,j\in\mathbb{S}_2^0)$ of (\ref{5.4}) at simulation number equals to $10^3$, $10^4$ and $10^5$, respectively. The functions $x^{\star}(\cdot)$, $r^{\star}(\cdot)$ and $\Sigma^c_{\epsilon}(\cdot)_{(i,j)}\ (i,j\in\mathbb{S}_2^0)$ are plotted by the corresponding purple dotted lines. Fixed parameters: $\epsilon=0.01$, $\theta=50$.}
\end{figure}
\begin{table}[hthp]
	\caption{List of values of $\overline{\mathbb{A}\text{ee}}_x$, $\overline{\mathbb{A}\text{ee}}_r$ and $\overline{\mathbb{A}\text{ev}}_{ij}\ (i,j\in\mathbb{S}_2^0)$ at different simulation numbers and parameters $(\epsilon,\theta)$}
	\label{table:para}
	\begin{small}
		\begin{center}
			\begin{tabular}{llllll}
				\hline\noalign{\smallskip}
				$((\epsilon,\theta),\mathbf{Num})$ & $\overline{\mathbb{A}\text{ee}}_x$ & $\overline{\mathbb{A}\text{ee}}_r$ & $\overline{\mathbb{A}\text{ev}}_{11}$ & $\overline{\mathbb{A}\text{ev}}_{12}$ & $\overline{\mathbb{A}\text{ev}}_{22}$\\
				\noalign{\smallskip}\hline\noalign{\smallskip}
				$(\text{case\ (I)},10^3)$ & $0.0341042\%$ & $0.0403638\%$ & $3.448\%$ & $4.067\%$ & $3.484\%$\\
				$(\text{case\ (I)},10^4)$ & $0.00933391\%$ & $0.01162\%$ & $1.031\%$ & $1.162\%$ & $1.087\%$\\
				$(\text{case\ (I)},10^5)$ & $0.00360542\%$ & $0.00391509\%$ & $0.289\%$ & $0.343\%$ & $0.319\%$\\
				$(\text{case\ (II)},10^3)$ & $0.0277649\%$ & $0.0358883\%$ & $3.683\%$ & $4.147\%$ & $3.543\%$\\
				$(\text{case\ (II)},10^4)$ & $0.0111913\%$ & $0.0133041\%$ & $1.351\%$ & $1.485\%$ & $1.232\%$\\
				$(\text{case\ (II)},10^5)$ & $0.00320987\%$ & $0.00386201\%$ & $0.370\%$ & $0.407\%$ & $0.362\%$\\
				\noalign{\smallskip}\hline
			\end{tabular}
		\end{center}
	\end{small}
\end{table}
{\rm{For case (II), we compute
		\begin{equation*}
		\Sigma^c_{\epsilon}(0)=10^{-5}\times\left(\begin{array}{cc}
		 3.17278 & 0.99841 \\ 
		 0.99841 & 0.35018
		\end{array} \right). 
		\end{equation*}
		Combined with (\ref{5.7}) and Remark \ref{55}, Fig. 3 shows the graphs of the key functions on $t\in[0,200]$, including: (i) $x^{\star}(\cdot)$, $r^{\star}(\cdot)$ and $\Sigma^c_{\epsilon}(\cdot)_{(i,j)}\ (i,j\in\mathbb{S}_2^0)$, all in the red dotted lines; (ii) $\overline{\text{M}}_x(\cdot,\cdot)$, $\overline{\text{M}}_r(\cdot,\cdot)$ and $\overline{\text{Cov}}_{ij}(\cdot,\cdot)\ (i,j\in\mathbb{S}_2^0)$ at $\mathbf{Num}=10^3$, $10^4$ and $10^5$, each in a different color. It is clear that the function $\overline{\text{M}}_x(\cdot,\mathbf{Num})$ (resp., $\overline{\text{M}}_r(\cdot,\mathbf{Num})$) gradually approaches $x^{\star}(\cdot)$ (resp., $r^{\star}(\cdot)$) as $\mathbf{Num}$ increases. Furthermore, the functions $\Sigma^c_{\epsilon}(t)_{(i,j)}\ (i,j\in\mathbb{S}_2^0)$ all almost coincide with the corresponding three fitting curves. These verifies ($\boldsymbol{\otimes}$-1) and Theorem \ref{3-1} well. To support the theoretical results deeply, we further provide in Table 2 the corresponding values of $\overline{\mathbb{A}\text{ee}}_x$, $\overline{\mathbb{A}\text{ee}}_r$ and $\overline{\mathbb{A}\text{ev}}_{ij}\ (i,j\in\mathbb{S}_2^0)$ under different $\mathbf{Num}$. Clearly, all the average relative errors inspected at $\mathbf{Num}=10^5$ (resp., $10^4$) are less than $0.5\%$ (resp., $2\%$). Based on the Kolmogorov--Smirnov test, we further determine that the null hypothesis that the distribution $\mathbb{N}_2((\frac{\overline{r}}{b},\overline{r})^{\top},\Sigma^c_{\epsilon}(t))$ and the numerical probability distribution of (\ref{5.4}) under $\mathbf{Num}=10^5$ are from the same distribution cannot be rejected with 2\% significance level. Hence, the solution $(\mathbf{X}^{\triangleright}_{\epsilon}(t))$ approximates the exact stochastic $\theta$-periodic solution of (\ref{5.4}) well, which validates our approach.}}
\begin{figure}[H]
	\begin{center}
		\resizebox{18cm}{9.6cm}{\includegraphics{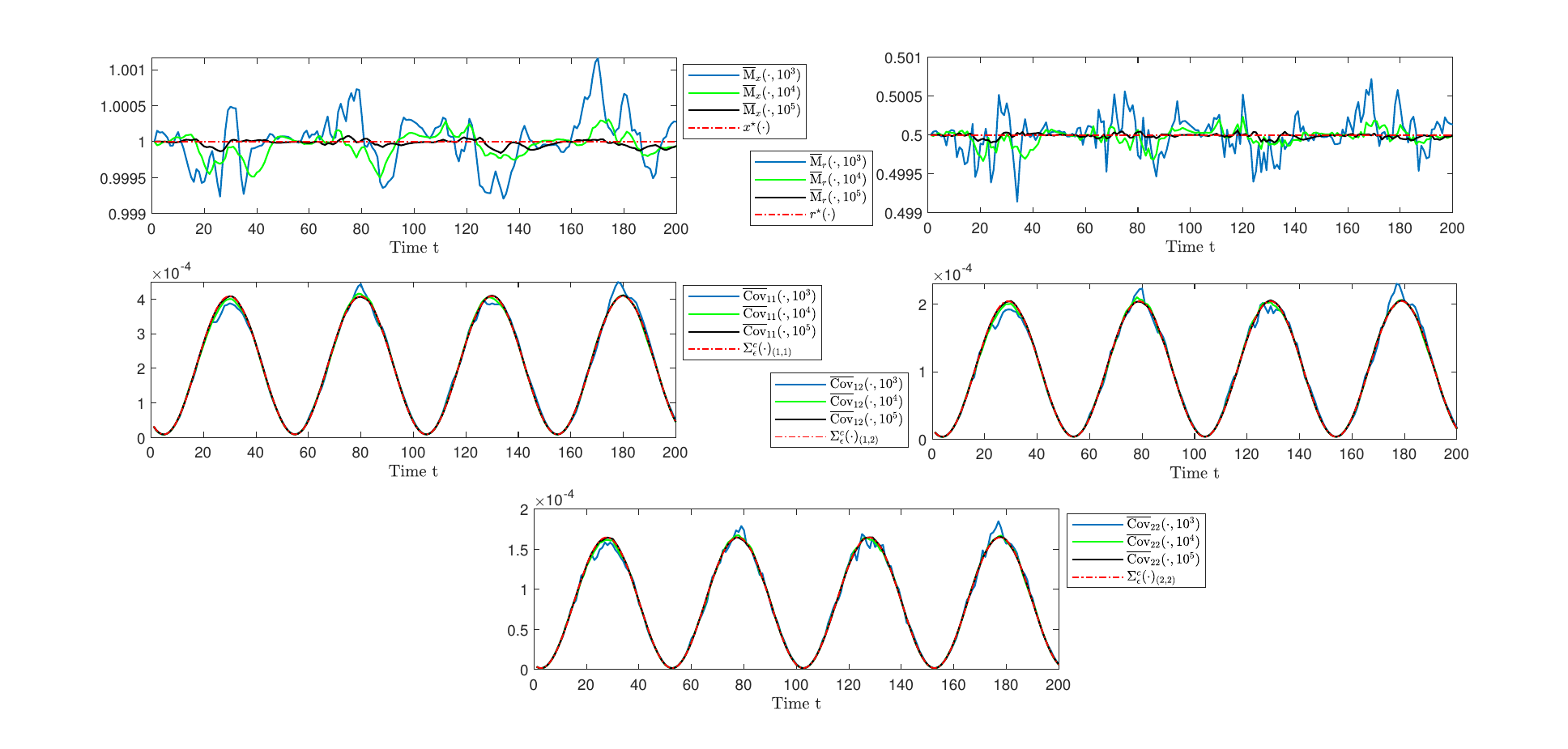}}
	\end{center}
	\makeatletter\def\@captype{figure}\makeatother \caption{The blue, green and black lines denote the functions $(\overline{\text{M}}_x(\cdot),\overline{\text{M}}_r(\cdot),\overline{\text{Cov}}_{11}(\cdot),\overline{\text{Cov}}_{12}(\cdot),\overline{\text{Cov}}_{22}(\cdot))$ of (\ref{5.4}) at $\mathbf{Num}=10^3$, $10^4$ and $10^5$, respectively. The red dotted lines depict the functions $x^{\star}(\cdot)$, $r^{\star}(\cdot)$ and $\Sigma^c_{\epsilon}(\cdot)_{(i,j)}\ (i,j\in\mathbb{S}_2^0)$ on $t\in[0,200]$. Fixed parameters: $\epsilon=0.01$, $\theta=100$.}
\end{figure}
{\rm{For case (III) (resp., (IV)), by a similar argument in case (I), Fig. 4 (resp., Fig. 5) presents the relationship between the functions $(\overline{\text{M}}_x(\cdot),\overline{\text{M}}_r(\cdot),\overline{\text{Cov}}_{11}(\cdot),\overline{\text{Cov}}_{12}(\cdot),\overline{\text{Cov}}_{22}(\cdot))$ and $(x^{\star}(\cdot),r^{\star}(\cdot),\Sigma^c_{\epsilon}(\cdot)_{(1,1)},\Sigma^c_{\epsilon}(\cdot)_{(1,2)},\Sigma^c_{\epsilon}(\cdot)_{(2,2)})$ on $t\in[0,200]$ at different simulation numbers. Table 3 shows the corresponding values of $\overline{\mathbb{A}\text{ee}}_x$, $\overline{\mathbb{A}\text{ee}}_r$ and $\overline{\mathbb{A}\text{ev}}_{ij}\ (i,j\in\mathbb{S}_2^0)$ under different $\mathbf{Num}$ and noise intensities. Evidently, all the average relative errors inspected when $\mathbf{Num}=10^5$ are still less than $0.5\%$. Moreover, using the Kolmogorov--Smirnov test, we consider the hypothesis testing problem with its null hypothesis that the distribution $\mathbb{N}_2((\frac{\overline{r}}{b},\overline{r})^{\top},\Sigma^c_{\epsilon}(t))$ and the numerical probability distribution of (\ref{5.4}) under $\mathbf{Num}=10^5$ and case (III) (or (IV)) are from the same distribution. It turns out that the hypothesis will be accepted at 2\% significance level, $\forall\ i\in\mathbb{S}_3^0$. This together with Figs. 3 and 4 implies that the solution $(\mathbf{X}^{\triangleright}_{\epsilon}(t))$ have a good global approximation effect for the underlying one of (\ref{5.4}).

Summing up cases (I)-(IV), Theorem \ref{3-1} and ($\boldsymbol{\otimes}$-1) are well demonstrated through the numerical experiments.}}
\begin{table}[hthp]
	\caption{List of values of $\overline{\mathbb{A}\text{ee}}_x$, $\overline{\mathbb{A}\text{ee}}_r$ and $\overline{\mathbb{A}\text{ev}}_{ij}\ (i,j\in\mathbb{S}_2^0)$ at different $\epsilon$ and simulation numbers (Fixed parameter: $\theta=100$)}
	\label{table:para}
	\begin{small}
		\begin{center}
			\begin{tabular}{llllll}
				\hline\noalign{\smallskip}
				$(\epsilon,\mathbf{Num})$ & $\overline{\mathbb{A}\text{ee}}_x$ & $\overline{\mathbb{A}\text{ee}}_r$ & $\overline{\mathbb{A}\text{ev}}_{11}$ & $\overline{\mathbb{A}\text{ev}}_{12}$ & $\overline{\mathbb{A}\text{ev}}_{22}$\\
				\noalign{\smallskip}\hline\noalign{\smallskip}
				$(\text{case\ (III)},10^3)$ & $0.0838015\%$ & $0.102289\%$ & $3.762\%$ & $4.112\%$ & $3.583\%$\\
				$(\text{case\ (III)},10^4)$ & $0.0239435\%$ & $0.0284901\%$ & $1.144\%$ & $1.296\%$ & $1.132\%$\\
				$(\text{case\ (III)},10^5)$ & $0.0108989\%$ & $0.00950172\%$ & $0.375\%$ & $0.420\%$ & $0.367\%$\\
				$(\text{case\ (IV)},10^3)$ & $0.119937\%$ & $0.148777\%$ & $3.116\%$ & $3.691\%$ & $3.448\%$\\
				$(\text{case\ (IV)},10^4)$ & $0.0376384\%$ & $0.0458015\%$ & $1.108\%$ & $1.269\%$ & $1.077\%$\\
				$(\text{case\ (IV)},10^5)$ & $0.0201679\%$ & $0.0134185\%$ & $0.366\%$ & $0.429\%$ & $0.361\%$\\
				\noalign{\smallskip}\hline
			\end{tabular}
		\end{center}
	\end{small}
\end{table}
\begin{figure}[H]
	\begin{center}
		\resizebox{18cm}{9.6cm}{\includegraphics{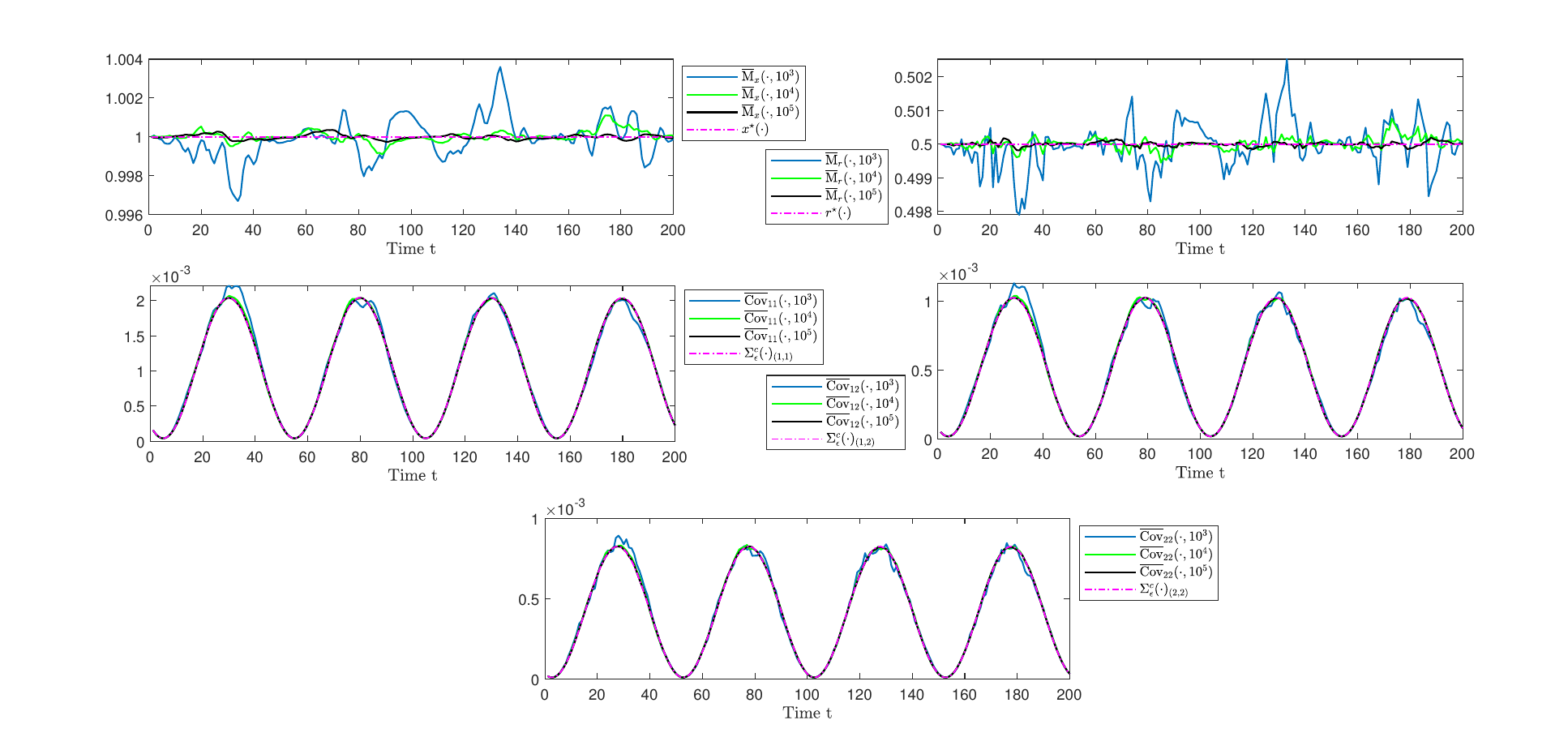}}
	\end{center}
	\makeatletter\def\@captype{figure}\makeatother \caption{The blue, green and black lines represent the functions $\overline{\text{M}}_x(\cdot)$, $\overline{\text{M}}_r(\cdot)$, $\overline{\text{Cov}}_{ij}(\cdot)\ (i,j\in\mathbb{S}_2^0)$ of (\ref{5.4}) at simulation number equals to $10^3$, $10^4$ and $10^5$, respectively. All the functions $x^{\star}(\cdot)$, $r^{\star}(\cdot)$ and $\Sigma^c_{\epsilon}(\cdot)_{(i,j)}\ (i,j\in\mathbb{S}_2^0)$ are depicted by the purple dotted lines. Fixed parameters: $\epsilon=0.05$, $\theta=100$.}
\end{figure}
\begin{figure}[H]
	\begin{center}
		\resizebox{18cm}{9.6cm}{\includegraphics{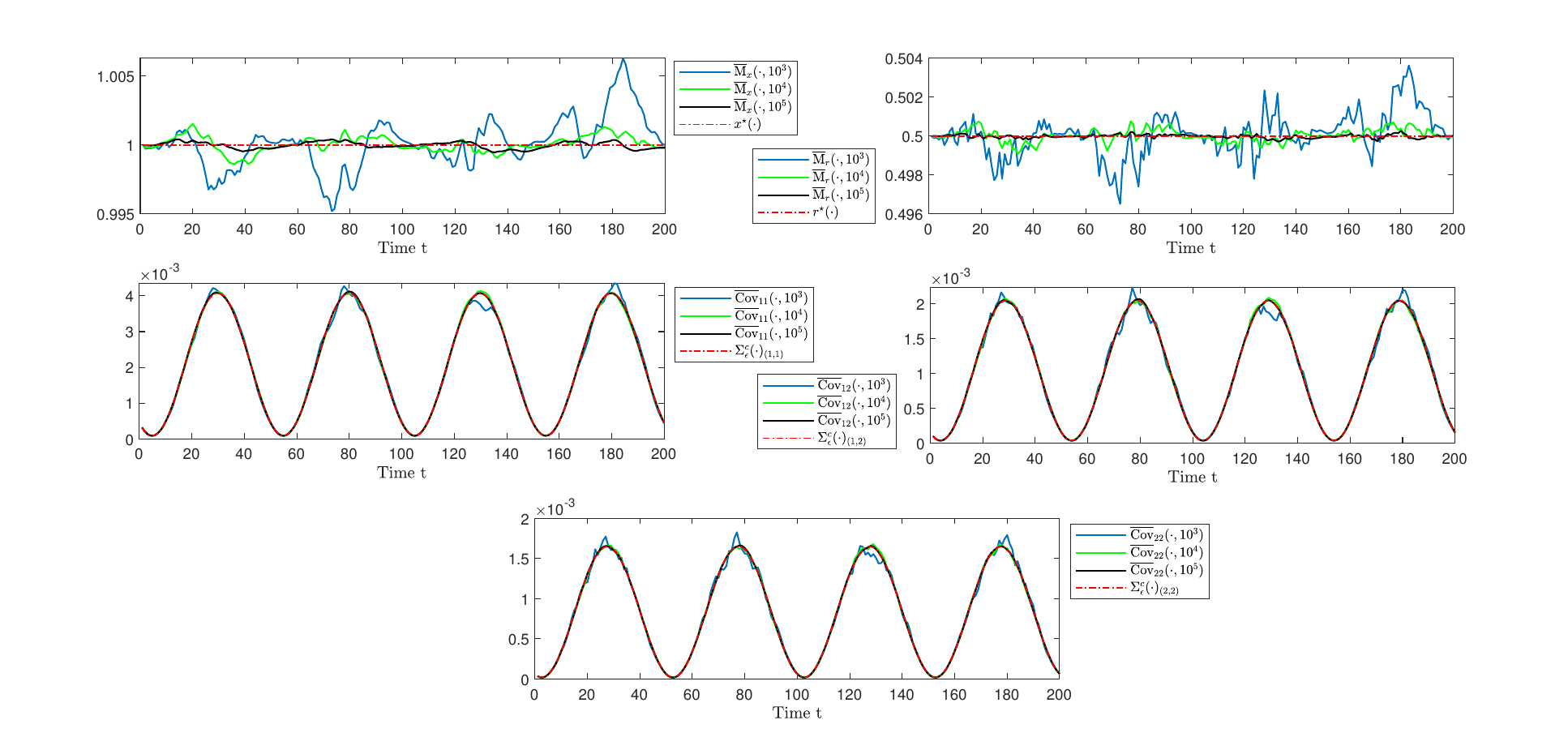}}
	\end{center}
	\makeatletter\def\@captype{figure}\makeatother \caption{The blue, green and black lines denote the functions $\overline{\text{M}}_x(\cdot)$, $\overline{\text{M}}_r(\cdot)$, $\overline{\text{Cov}}_{ij}(\cdot)\ (i,j\in\mathbb{S}_2^0)$ of (\ref{5.4}) at $\mathbf{Num}=10^3$, $10^4$ and $10^5$, respectively. All the functions $x^{\star}(\cdot)$, $r^{\star}(\cdot)$ and $\Sigma^c_{\epsilon}(\cdot)_{(i,j)}\ (i,j\in\mathbb{S}_2^0)$ are plotted by the corresponding red dotted lines. Fixed parameters: $\epsilon=0.1$, $\theta=100$.}
\end{figure} 
\end{example}
\begin{remark}\label{88}
	{\rm{Finally, we provide some concluding remarks:
	\begin{itemize}
	\item In the literature, the long-term properties of the Logistic models (e.g., (\ref{5.1})) can be well characterized by studying the reciprocal of solutions \cite{S63,S64}. However, such idea cannot be applicable to (\ref{5.4}). Mainly, the equation
	\begin{equation*}
	d\Big(\frac{1}{x_{\epsilon}(t)}\Big)=\Big(b-r_{\epsilon}(t)\cdot\frac{1}{x_{\epsilon}(t)}\Big)dt.
	\end{equation*} 
	is not linear due to the Ornstein--Uhlenbeck process. As a result, the solution of (\ref{5.4}) are not explicitly obtained as desired. This forces us to consider the numerical solution with enough computer simulations and sufficient iterations as a viable alternative for the SPS of (\ref{5.4}).
	\item Although the studies of (\ref{5.4}) and Example \ref{5:1} are mainly established under $m_0\neq\overline{r}$, the relevant analysis under $m_0=\overline{r}$ can be similarly carried out by Theorem \ref{3-1}. For example, when $m_0=\overline{r}$, we have
	\begin{equation*}
	\boldsymbol{\Phi}_{C^{\star}}(t)=\left(\begin{array}{cc}
	e^{-\overline{r}t} & \frac{\overline{r}te^{-\overline{r}t}}{b} \\ 
	0 & e^{-\overline{r}t}
	\end{array}\right),\ \ \ \boldsymbol{\Phi}^{-1}_{C^{\star}}(t)=\left(\begin{array}{cc}
	e^{\overline{r}t} & -\frac{\overline{r}te^{\overline{r}t}}{b} \\ 
	0 & e^{\overline{r}t}
	\end{array}\right).
	\end{equation*} 
	\item As was mentioned, Ornstein--Uhlenbeck process is a both biologically and mathematically reasonable assumption involved in the stochasticity and periodicity. However, only few studies have focused on the impact of such stochastic modeling approach on biological and ecological processes. As parts of our future work, we would like to explore effective techniques for analyzing the threshold dynamics of stochastic models motivated by Ornstein--Uhlenbeck process. 
	\item Although our main motivation of this section comes from the Logistic model, the ideas (i)-(iii) in Remark \ref{55} together with the Kolmogorov--Smirnov test is a general routine for studying the approximation effect of our approximate SPS on the underlying one of (\ref{1.2}). We expect to present more applications in ecology and biology in future work.
	\end{itemize}}}
		\end{remark}	
\section*{Appendix A} 
 \setcounter{equation}{0}
 \renewcommand\theequation{A.\arabic{equation}}
 \renewcommand\thelemma{A.\arabic{lemma}}
 \renewcommand\thetheorem{A.\arabic{theorem}}
 \textbf{(I) (Proof of Proposition \ref{2&1}):} Let $\psi_A(\lambda)=\sum_{i=0}^{l}a_i\lambda^{l-i}\ (a_0=1)$ for convenience. In view of $A\in\mathscr{T}(l)$ and Definition \ref{2:2}, there is a vector $(a_1,...,a_l)^{\top}:=\mathbf{a}^{\top}$ such that
 \begin{equation}\label{a.1}
 A=\left(\begin{array}{cc}
 -\mathbf{a}^{\langle l-1\rangle} & -a_l\\ 
 \mathbf{I}_{l-1} & \mathbb{O}
 \end{array}\right).
 \end{equation}
 Let $\lambda_i\ (i\in\mathbb{S}_{\ell_0}^0)$ be all the different roots of $\psi_A(\lambda)=0$, where $\ell_0\le l$. Below we divide the proof of Proposition \ref{2&1} into two steps.

 \textbf{Step 1.} (Proof for result (i) in Proposition \ref{2&1}): Using the definition of $\overline{\mathbf{CM}}(l)$, we have $|\lambda_i\lambda_j|\in(0,1)$ for any $i,j\in\mathbb{S}_{\ell_0}^0$. Then by Lemma \ref{2,1}, $\Xi_l$ is unique. In addition, it follows from Jordan theorem that there exists an invertible matrix $P_0$ such that
 $A=P_0J_{\text{ord}}P_0^{-1}$, where
 $$J_{\text{ord}}=\text{diag}\{J_{d(1)}(\lambda_1),...,J_{d(\ell_0)}(\lambda_{\ell_0})\},$$
 with $d(i):=\text{dim}(J_{d(i)}(\lambda_i))$, $\forall\ i\in\mathbb{S}_{\ell_0}^0$. 
 \\
 To proceed, let $m_1=\max_{i\in\mathbb{S}_{\ell_0}^0}\{d(i)\}$. In view of $J_{d(i)}(\lambda_i)=\lambda_i\mathbf{I}_{d(i)}+J_{d(i)}(0)$, where $(J_{d(i)}(0))^{d(i)-1}=\mathbb{O}$, then
 $$(J_{d(i)}(\lambda_i))^r=\sum_{k=0}^{d(i)-1}\mathbb{C}_r^k\lambda_i^{r-k}\mathbf{I}_{d(i)}(J_{d(i)}(0))^k,\ \ \ \forall\ r\ge m_1,$$
 where $\mathbb{C}$ is the combinatorial number. By calculation, we obtain $J_{\text{ord}}^m=\text{diag}\{(J_{d(1)}(\lambda_1))^m,...,(J_{d({\ell_0})}(\lambda_{\ell_0}))^m\}$, and
 \begin{equation}\label{a.2}
 \lim_{m\rightarrow\infty}(J_{d(i)}(\lambda_i))^m=\lim_{\substack{m\ge m_1\\ m\rightarrow\infty}}\sum_{k=0}^{d(i)-1}\mathbb{C}_r^k\lambda_i^{r-k}\mathbf{I}_{d(i)}(J_{d(i)}(0))^k=\mathbb{O},\ \ \ \forall\ i\in\mathbb{S}_{\ell_0}^0,
 \end{equation}
 which means $\lim_{m\rightarrow\infty}A^m=\mathbb{O}$. In this sense, by complex calculation and a standard argument in \cite{S65}, $\Xi_l$ has an explicit form:
 \begin{equation}\label{a.3}
 \Xi_l=\sum_{k=0}^{\infty}A^k\amalg_{l,1}(A^k)^{\top},
 \end{equation}
 which can be verified by the fact
 \begin{equation*}
 \begin{split}
 \Xi_l-A\Xi_lA^{\top}=&\sum_{k=0}^{\infty}A^k\amalg_{l,1}(A^k)^{\top}-A\Big(\sum_{k=0}^{\infty}A^k\amalg_{l,1}(A^k)^{\top}\Big)A^{\top}\\
 =&\sum_{k=0}^{\infty}A^k\amalg_{l,1}(A^k)^{\top}-\sum_{k=1}^{\infty}A^k\amalg_{l,1}(A^k)^{\top}\\
 =&\amalg_{l,1}.
 \end{split}
 \end{equation*}
 It readily follows from (\ref{a.3}) that for any $\mathbf{X}\in\mathbb{R}^l$,
 \begin{align}\label{a.4}
 \mathbf{X}^{\top}\Xi_l\mathbf{X}=\sum_{k=0}^{\infty}\big|\amalg_{l,1}(A^k)^{\top}\mathbf{X}\big|^2\ge 0.
 \end{align}
 Thus, $\Xi_l\succeq\mathbb{O}$. Below we verify $\Xi_l\succ\mathbb{O}$ by using a contradiction argument. Suppose that there exists a $\mathbf{X}_{\phi}\in\mathbb{R}^l\setminus\{\mathbf{0}\}$ satisfying $\mathbf{X}_{\phi}^{\top}\Xi_l\mathbf{X}_{\phi}=0$, then
 \begin{equation}\label{a.5}
 \amalg_{l,1}(A^k)^{\top}\mathbf{X}_{\phi}=\mathbf{0},\ \ \forall\ k\in \{0\}\cup\mathbb{S}_{\infty}^0.
 \end{equation}
 Using Cayley--Hamilton theorem, (\ref{a.5}) is equivalent to
 \begin{equation}\label{a.6}
 \mathbf{X}_{\phi}^{\top}C_0=\mathbf{0}^{\top},
 \end{equation}
 where $C_0=\left(\amalg_{l,1},A\amalg_{l,1},...,A^{l-1}\amalg_{l,1}\right)$.

 Direct calculation shows that
 $C_0=\left(\boldsymbol{\xi}_1,\mathbb{O}_{l,l-1},\boldsymbol{\xi}_2,\mathbb{O}_{l,l-1},...,\boldsymbol{\xi}_l,\mathbb{O}_{l,l-1}\right)$, where the $\mathbb{R}^{l\times 1}$-valued vectors  $\boldsymbol{\xi}_j\ (j\in\mathbb{S}_l^0)$ is:
 \begin{equation*}
 \left(\boldsymbol{\xi}_1,\boldsymbol{\xi}_2,...,\boldsymbol{\xi}_l\right)=\left(\begin{array}{cccccc}
 1 & \alpha_1 & \alpha_2 & \alpha_3 & \cdots & \alpha_{l-1} \\ 
 0 & 1 & \alpha_1 & \alpha_2 & \cdots & \alpha_{l-2} \\ 
 0 & 0 & 1 & \alpha_1 & \cdots & \alpha_{l-3} \\ 
 0 & 0 & 0 & 1 & \cdots & \alpha_{l-4} \\ 
 \vdots & \vdots & \vdots & \vdots & \begin{sideways}$\ddots$\end{sideways} & \vdots \\ 
 0 & 0 & 0 & 0 & \cdots & 1
 \end{array}\right), 
 \end{equation*}
 with $\alpha_k\ (k\in\mathbb{S}_{l-1}^0)$ determined by the iterative scheme 
 $\alpha_k=-\sum_{i=1}^{k}a_i\alpha_{k-i}\ (\alpha_0=1)$. 
 \\
 Intuitively, we have $|\left(\boldsymbol{\xi}_1,\boldsymbol{\xi}_2,...,\boldsymbol{\xi}_l\right)|=1$, then rank$(C_0)=l$. This together with (\ref{a.6}) implies 
 $$\mathbf{X}_{\phi}=(C_0^{-1})^{\top}\mathbf{0}=\mathbf{0},$$
 which leads to a contradiction. Therefore, $\Xi_l\succ\mathbb{O}$.

 \textbf{Step 2.} (Proof for result (ii) in Proposition \ref{2&1}): Let $\Xi_l:=(q_{ij})_{l\times l}$ for convenience. We first define some constants by
 $$\gamma_i=-a_iq_{11}-\sum_{k=2}^{l}(a_{i+1-k}+a_{i+k-1})q_{1k},\ \ \ \forall\ i\in\mathbb{S}_l^0,$$
 where $a_j=0$ for any $j\notin\mathbb{S}_l^0$. In view of (\ref{a.1}), we obtain
 \begin{equation}\label{a.7}
 A\Xi_lA^{\top}=\left(\begin{array}{cccccc}
 -\sum_{i=1}^{n}a_i\gamma_i & \gamma_1 & \gamma_2 & \cdots & \gamma_{l-1} \\ 
 \gamma_1 & q_{11} & q_{12} & \cdots & q_{1,n-1} \\ 
 \gamma_2 & q_{12} & q_{22} & \cdots & q_{2,l-1} \\ 
 \vdots & \vdots & \vdots & \begin{sideways}$\ddots$\end{sideways} & \vdots \\ 
 \gamma_{l-1} & q_{1,l-1} & q_{2,l-1} & \cdots & q_{l-1,l-1}
 \end{array}\right).
 \end{equation}
 Inserting (\ref{a.7}) into Eq. (\ref{2.1}) yields
 \begin{equation*}
 \Xi_l=\left(\begin{array}{cccccc}
 q_{11} & q_{12} & q_{13} & q_{14} & \cdots & q_{1l} \\ 
 q_{12} & q_{11} & q_{12} & q_{13} & \cdots & q_{1,l-1} \\ 
 q_{13} & q_{12} & q_{11} & q_{12} & \cdots & q_{1,l-2} \\ 
 q_{14} & q_{13} & q_{12} & q_{11} & \cdots & q_{1,l-2} \\ 
 \vdots & \vdots & \vdots & \vdots & \begin{sideways}$\ddots$\end{sideways} & \vdots \\ 
 q_{1l} & q_{1,l-1} & q_{1,l-2} & q_{1,l-3} & \cdots & q_{11}
 \end{array}\right),
 \end{equation*} 
 with $q_{1k}\ (k\in\mathbb{S}_l^0)$ satisfying
 \begin{equation*}
 \begin{cases}
 q_{11}+\sum\limits_{i=1}^{l}a_i\gamma_i=1,\\
 q_{1i}-\gamma_{i-1}=0,\ \ \forall\ i\in\mathbb{S}_l^1.
 \end{cases}
 \end{equation*}
 It then follows from the definition of $\mathscr{G}_{c,A}$ that $(q_{11},q_{12},...,q_{1l})^{\top}:=\boldsymbol{q}$ is the solution of equation $\mathscr{G}_{c,A}\boldsymbol{x}=\mathbf{e}_l$. Combined with the uniqueness of $\Xi_l$, we have $\boldsymbol{q}=\boldsymbol{\zeta}$. Thus, result (ii) in Proposition \ref{2&1} holds.   
 \\
 \\
 \textbf{(I) (Proof of Proposition \ref{2&2}):} In fact, Proposition \ref{2&2} is evidently true when $l=1$. Thus we discuss the case of $l\ge 2$.

 Consider the following vector $\mathbf{Y}(t)=(Y_1(t),...,Y_l(t))^{\top}$:
 \begin{equation}\label{a.8}
 Y_l(t)=X_l(t),\ \ \text{and}\ \ Y_k(t)=Y_{k+1}^{'}(t),\ \ \forall\ k\in\mathbb{S}_{l-1}^0,
 \end{equation}
 where $(X_1(t),...,X_l(t))^{\top}:=\mathbf{X}(t)$ is the solution of equation $d\mathbf{X}=C\mathbf{X}dt$.

 Now we define $C=(c_{ij})_{l\times l}$ and stipulate that $c_{1,0}=1$. An application of recursion method coupled with (\ref{a.8}), Definition \ref{2:3} and $C\in\mathcal{U}_{cm}(l)$ yields that
 \begin{equation}\label{a.9}
 \mathbf{Y}(t)=\left(\begin{array}{c}
 \boldsymbol{\beta}_lC^{l-1}\\ 
 \boldsymbol{\beta}_lC^{l-2}\\ 
 \cdots\\ 
 \boldsymbol{\beta}_l
 \end{array}\right)\mathbf{X}(t)=\mathcal{D}\mathbf{X}(t).
 \end{equation}
 and
 \begin{equation}\label{a.10}
 \begin{cases}
 (\boldsymbol{\beta}_lC^j)^{\langle l-j-1\rangle}=\mathbf{0},\ \ \ \forall\ j\in\mathbb{S}_{l-2}^0,\\
 (\boldsymbol{\beta}_lC^k)^{(l-k)}=\prod_{i=l-k}^{l-1}c_{i+1,i}\neq 0,\ \ \ \forall\ k\in\mathbb{S}_{l-1}^0.
 \end{cases}
 \end{equation}
 Thus, $\mathcal{D}$ is an upper triangular matrix. In view of
 \begin{equation}\label{a.11}
 |\mathcal{D}|=\prod_{i=1}^{l-1}(\boldsymbol{\beta}_lC^{l-i})^{(i)}=\prod_{i=1}^{l-1}c_{i+1,i}^i\neq 0,
 \end{equation}
 then $\mathcal{D}\in\mathcal{U}(l)$, which implies that $d\mathbf{Y}=\mathcal{D}C\mathcal{D}^{-1}\mathbf{Y}dt$. Combining (\ref{a.8}) and Definition \ref{2:2} yields
 \begin{equation*}
 \mathcal{D}C\mathcal{D}^{-1}\in\mathscr{T}(l).
 \end{equation*}
 The proof is complete.
 \section*{Data Availability Statements}
 No data was used for the research described in the article.
 \section*{Conflict of interest}
 The authors declare that they have no conflict of interest.
 \section*{Acknowledgments}
 The research of Baoquan Zhou was partially supported by the Key Laboratory of Applied Statistics of MOE at Northeast Normal University. The research of Hao Wang was partially supported by the Natural Sciences and Engineering Research Council of Canada (Individual Discovery Grant RGPIN-2020-03911 and Discovery Accelerator Supplement Award RGPAS-2020-00090) and the Canada Research Chairs program (Tier 1 Canada Research Chair Award). The research of Tianxu Wang was partially supported by the Interdisciplinary Lab for Mathematical Ecology \& Epidemiology at the University of Alberta. The research of Daqing Jiang was partially supported by the National Natural Science Foundation of China (No. 11871473) and the Fundamental Research Funds for the Central Universities (No. 22CX03030A).
 \section*{References}
	\bibliographystyle{model1-num-names}
		
\end{document}